\documentclass[11pt]{amsart}
\usepackage[T1]{fontenc}
\usepackage{lmodern}
\usepackage{amsmath,amssymb,mathtools}
\usepackage{graphicx,tikz}
\usepackage[hypertexnames=false,colorlinks=true,linkcolor=black,citecolor=green!40!black,urlcolor=black]{hyperref}
\usetikzlibrary{arrows.meta}
\numberwithin{equation}{section}
\theoremstyle{plain}
\newtheorem{theorem}{Theorem}[section]
\newtheorem{proposition}[theorem]{Proposition}
\newtheorem{lemma}[theorem]{Lemma}
\newtheorem{corollary}[theorem]{Corollary}
\theoremstyle{remark}
\newtheorem{remark}[theorem]{Remark}
\newcommand{\R}{\mathbb R}

\newcommand{\I}{\mathbb I}
\newcommand{\M}{\mathcal M}
\newcommand{\A}{\mathcal A}

\DeclareMathOperator{\supp}{supp}
\DeclareMathOperator{\diag}{diag}
\DeclareMathOperator{\conv}{conv}

\allowdisplaybreaks[2]
\makeatletter
\@ifundefined{subjclassname@2020}{%
  \@namedef{subjclassname@2020}{%
    \textup{2020} Mathematics Subject Classification}%
}{}
\makeatother

\begin{document}
\title[The strong circular maximal operator]
{Sharp $L^p$--$L^q$ bounds for the strong circular maximal operator}
\author{Jin Bong Lee}
\address{Research Institute of Mathematics, Seoul National University, Seoul 08826, Republic of Korea}
\email{jinblee@snu.ac.kr}
\author{Sanghyuk Lee}
\address{Department of Mathematical Sciences and RIM, Seoul National University, Seoul 08826, Republic of Korea}
\email{shklee@snu.ac.kr}
\author{Seunghwan Lee}
\address{Department of Mathematical Sciences, Seoul National University, Seoul 08826, Republic of Korea}
\email{swlee6720@snu.ac.kr}
\author{Jeongtae Oh}
\address{Research Institute of Mathematics, Seoul National University, Seoul 08826, Republic of Korea}
\email{ojt0117@snu.ac.kr}
\date{}
\subjclass[2020]{Primary 42B25; Secondary 42B20}
\keywords{Strong circular maximal operator, $L^p$--$L^q$ estimates, bilinear estimates}
\begin{abstract}
We study $L^p$--$L^q$ boundedness of the strong circular maximal operator $\M$, the maximal function over axis-parallel ellipses centered at the origin whose two semiaxes range independently over $[1,2]$. 
We prove that $\M$ is bounded from $L^p$ to $L^q$ whenever $(1/p,1/q)$ lies in the interior of the triangle with vertices $(0,0)$, $(2/7,1/7)$, and $(1/3,1/3)$, the two-parameter analogue of the triangle with vertices $(0,0)$, $(2/5,1/5)$, and $(1/2,1/2)$ obtained by Schlag and Schlag--Sogge for the circular maximal function. 
The range is sharp up to the boundary; in particular, weak type $(p,q)$ bounds fail for every $p\le3$. 
The proof is based on a bilinear estimate for frequencies in separated angular sectors, uniform as the sectors approach the coordinate axes. 
We also prove the analogous sharp result for axis dilations of a translated circle $c+\mathbb S^1$ with fixed $c\ne0$, where the vertex $(2/7,1/7)$ is replaced by $(1/4,1/8)$.
\end{abstract}
\maketitle

\section{Introduction}\label{sec:introduction}

Let $d\sigma$ be the normalized arclength measure on the unit circle
$\mathbb S^1$. For $t,s>0$, define the probability measure $\sigma_{t,s}$ on
the ellipse $\{(t\cos\theta,s\sin\theta)\}$ by
\[
 \int g(y)\,d\sigma_{t,s}(y)=\int_{\mathbb S^1}g(ty_1,sy_2)\,d\sigma(y).
\]
In other words, $\sigma_{t,s}$ is the pushforward of $d\sigma$ under the
linear map $y\mapsto(ty_1,sy_2)$; it is not the normalized arclength
measure of the ellipse. For $t,s\in[1,2]$, the two measures have mutually
bounded densities, so this choice does not affect the results below.

\subsection{The strong circular maximal operator}\label{sec:operator}
Put $\I=[1,2]$, $\A_{t,s}f=f*\sigma_{t,s}$, and
\begin{equation}\label{eq:maximal-operator}
 \M f(x)=\sup_{(t,s)\in\I^2}|\A_{t,s}f(x)|.
\end{equation}
Following~\cite{LLOelliptic}, we call $\M$ the strong circular maximal operator: the
ellipses are axis-parallel and the two semiaxes vary independently.
Throughout the paper $(t,s)$ ranges over the fixed square $\I^2$; the
global operator, with $(t,s)\in(0,\infty)^2$, is discussed below.

For $c=(c_1,c_2)\in\R^2$, the corresponding averages over axis dilations
of $c+\mathbb S^1$ and their maximal operator are given by 
\[
 \begin{aligned}
 \A^c_{t,s}f(x)&=\A_{t,s}f\bigl(x-(tc_1,sc_2)\bigr),\\[3pt]
 \M^cf(x)&=\sup_{(t,s)\in\I^2}|\A^c_{t,s}f(x)|.
 \end{aligned}
\]
We refer to $c=0$ as the centered case and to a fixed $c\ne0$ as the
translated case. Here $c$ is the center of the circle before dilation,
so for $c\ne0$ the center $(tc_1,sc_2)$ of the dilated ellipse moves
with $(t,s)$. Note that $\A^0_{t,s}=\A_{t,s}$ and $\M^0=\M$.

To place our results in context, we first consider the circular
maximal function
\[
 \M_{\mathrm{circ}} f(x)=\sup_{1<t<2}|\A_{t,t}f(x)|,
\]
which corresponds to the diagonal $t=s$. Bourgain~\cite{Bourgain} proved that the global
circular maximal function, with $t>0$, is bounded on $L^p$ for $p>2$;
see also Mockenhaupt--Seeger--Sogge~\cite{MSS} for a proof based on
local smoothing. For the local operator, Schlag~\cite{Schlag} showed that $\M_{\mathrm{circ}}$ is bounded from $L^p$ to
$L^q$ when $(1/p,1/q)$ lies in the interior of the triangle with
vertices $(0,0)$, $(1/2,1/2)$, and $(2/5,1/5)$, and Schlag--Sogge~\cite{SS} gave a proof based on local smoothing estimates. These
estimates fail outside the closed triangle and at $(1/2,1/2)$, and Lee~\cite{Lee} proved them on the remaining boundary except at
$(2/5,1/5)$, where the restricted weak type estimate
$L^{5/2,1}\to L^{5,\infty}$ holds. Recently, Lee--Zhao~\cite{LZ} proved
the strong type estimate $L^{5/2}\to L^5$ at this vertex, so the
$L^p$--$L^q$ bounds for $\M_{\mathrm{circ}}$ are now completely
determined.

As with $\M_{\mathrm{circ}}$, $L^p$ bounds for $\M$ were obtained first.
Lee--Lee--Oh~\cite[Theorem 1.2]{LLOelliptic} proved $L^p$ boundedness
for $p>4$ of the global strong circular maximal operator, with
$(t,s)\in(0,\infty)^2$, and
hence of $\M$; they observed that a modification of an example of
Erdo\u gan~\cite{Erdogan} rules out $p\le3$. Their Fourier
representation, parameter Sobolev reduction, and local smoothing
estimates provide the analytic framework used here. The sharp range
$p>3$ for $\M$ follows by combining two-parameter local smoothing at
large exponents with the incidence estimate of Pramanik--Yang--Zahl~\cite{PYZ}; Lemma~\ref{lem:diagonal} below uses the local smoothing
estimate~\cite[Proposition 2.3]{LLOelliptic}. Chen--Guo--Yang~\cite[Corollary 1.2 and Section 3]{CGY} obtained the same range from
their general local smoothing theorem and~\cite{PYZ}, and the
added-in-proof note in~\cite{LLOelliptic} states that it holds for the
global operator. Following the coordinatewise Littlewood--Paley
decomposition and summation argument in~\cite[Section 2.1]{LLOelliptic},
with Lemma~\ref{lem:translated-diagonal} providing the local frequency
decay, we obtain $L^p$ boundedness for $p>3$ of the global operator
$\sup_{(t,s)\in(0,\infty)^2}|\A^c_{t,s}f|$. The anisotropic rescaling
preserves the fixed translation parameter $c$, and the implicit
constants may depend on $c$.

In this paper we determine, for each
fixed $c\in\R^2$ and up to the boundary, the set of $(1/p,1/q)$ for which $\M^c$ is bounded
from $L^p(\R^2)$ to $L^q(\R^2)$.

\subsection{The centered case}\label{sec:main-result}
Define
\begin{equation}\label{eq:triangle}
 \Delta=\conv\left\{(0,0),\left(\frac27,\frac17\right),
                         \left(\frac13,\frac13\right)\right\}.
\end{equation}
Our first result is the following, which establishes $L^p$--$L^q$
boundedness of $\M$ on a range of $(1/p,1/q)$ that is sharp up to the
boundary.

\begin{theorem}\label{thm:main}
If $(1/p,1/q)\in\operatorname{int}\Delta$, or if $3<p=q\le\infty$,
then
\begin{equation}\label{eq:main-bound}
 \|\M f\|_{L^q(\R^2)}\le C_{p,q}\|f\|_{L^p(\R^2)}.
\end{equation}
Conversely, if \eqref{eq:main-bound} holds for some $1\le p,q\le\infty$, then
$(1/p,1/q)\in\Delta$ and $p>3$.
\end{theorem}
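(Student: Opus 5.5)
The proof splits into a sufficiency part and a necessity part.

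\emph{Sufficiency.} The diagonal case $3<p=q\le\infty$ is already available: it follows from the two-parameter local smoothing estimate \cite[Proposition 2.3]{LLOelliptic}, in the form of Lemma~\ref{lem:diagonal}, combined with \cite{PYZ}. Since the operator is sublinear, interpolation with the trivial $L^\infty\to L^\infty$ bound reduces the off-diagonal part to one claim. For $(1/p,1/q)$ arbitrarily close to $(2/7,1/7)$ we need
\[
\|\M f\|_{L^q}\lesssim \|f\|_{L^p}.
\]
Interpolating this with the diagonal bounds near $(1/3,1/3)$ and with $(0,0)$ then fills $\operatorname{int}\Delta$.

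To prove the claim, I would decompose $f=\sum_j P_jf$ into Littlewood--Paley pieces at frequency $2^j$. The aim is an estimate
\[
\|\sup_{(t,s)\in\I^2}|\A_{t,s}P_jf|\|_{L^q}\lesssim 2^{-\epsilon j}\|f\|_{L^p}
\]
near the vertex, which can then be summed in $j$.

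For fixed $j$ I would follow the Fourier representation of \cite{LLOelliptic}. The kernel of $\A_{t,s}$ is $\sum_\pm e^{i(x\cdot\xi\pm\sqrt{t^2\xi_1^2+s^2\xi_2^2})}a_\pm$, and the parameter Sobolev embedding in $(t,s)$ costs $2^{j(1/q+1/q)}$, one factor for each parameter. So it suffices to bound the $L^p\to L^q(\R^2\times\I^2)$ norm of the two-parameter wave-type propagator, with half-derivatives in $t$ and $s$.

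The key step is a bilinear square-function reduction in the style of Schlag--Sogge and Lee, recast for two parameters. I would decompose the frequency annulus into angular sectors. For transversal pairs of sectors at angular separation $2^{-k}$, I would prove a bilinear $L^2\times L^2\to L^{q/2}$ estimate for the $(x,t,s)$-extension. The required curvature comes from the Hessian of $\sqrt{t^2\xi_1^2+s^2\xi_2^2}$ in $(\xi,t,s)$, and the estimate must be uniform as the sectors approach the coordinate axes, where $t$- or $s$-dependence degenerates. Rescaling near an axis (anisotropically, $\xi_2\mapsto 2^{k}\xi_2$) should turn a near-axis sector pair into a standard separated pair with bounded constants. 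The diagonal (nearly parallel) part is then handled by Whitney decomposition and an orthogonality/induction argument, using Lemma~\ref{lem:diagonal} on each narrow sector.

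I expect this bilinear estimate, uniform up to the axes, to be the main obstacle. The exponent $(2/7,1/7)$ should emerge from balancing the bilinear gain against the extra $2^{j/q}$ Sobolev loss from the second parameter. Replacing $5$ by $7$ matches a $q=7$ bilinear threshold for a four-dimensional $(x_1,x_2,t,s)$ surface problem.

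\emph{Necessity.} The condition $p>3$ and the failure of weak type for $p\le3$ come from the modified Erdo\u gan example of \cite{LLOelliptic}. Take $f$ to be the characteristic function of a $\delta$-neighborhood of a suitable curve, or a logarithmically weighted sum of such pieces, so that $\M f\gtrsim1$ on a set of measure $\gtrsim1$ by adjusting the two semiaxes. This gives $\|\M f\|_{L^{q,\infty}}\gtrsim1$ while $\|f\|_p\to0$ when $p\le3$.

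For the other edges of $\Delta$ I would use three examples:
\begin{itemize}
\item a small ball $f=\chi_{B(0,\delta)}$, for which $\M f\gtrsim\delta$ on the two-dimensional annular region swept by the ellipses;
\item a $\delta$-annulus (Knapp-type) example, with $f$ the indicator of a $\delta$-neighborhood of an ellipse, for which $\M f\gtrsim1$ on a $\delta$-ball;
\item a $\delta\times\delta^{1/2}$ rectangle example adapted so that both parameters are used, which produces the edge through $(2/7,1/7)$.
\end{itemize}
Compare $\|\M f\|_q$ with $\|f\|_p$ in each case and let $\delta\to0$. The resulting inequalities should cut out exactly the three edges of $\Delta$. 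The intersection of the two non-diagonal lines gives $(2/7,1/7)$, and this is the computation I would verify carefully.
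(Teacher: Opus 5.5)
\textbf{Sufficiency.} Your first reduction cannot reach the vertex. You apply the parameter Sobolev inequality to the whole operator at the isotropic cost $2^{2j/q}$ and then ask for $\|\A_{t,s}P_\lambda f\|_{L^q(\R^2\times\I^2)}\lesssim\lambda^{-2/q-\epsilon}\|f\|_p$. That spacetime bound is false whenever $p<4$.

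To see this, test it on $f=e^{i\lambda\psi}a$. Here $\psi$ is the signed distance to a circle of radius $r_0\in(1,2)$, and $a$ is a bump on the $\lambda^{-1}$-neighborhood of an arc of length $h=\lambda^{-1/3}$, so $\|f\|_p\sim(\lambda^{-1}h)^{1/p}$. Whenever $x+E_{t,s}$ stays within $c\lambda^{-1}$ of the arc, one has $|\A_{t,s}f(x)|\gtrsim h$. This requires matching value, slope and curvature to within $\lambda^{-1},\lambda^{-2/3},\lambda^{-1/3}$, which defines a set of $(x,t,s)$ of measure $\sim\lambda^{-2}$. The bound you need would then force $h\lambda^{-2/q}\lesssim\lambda^{-2/q}(\lambda^{-1}h)^{1/p}$, which fails as $\lambda\to\infty$ unless $p\ge4$. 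The isotropic cost is the culprit: at each good $x$ the relevant $(t,s)$-set has measure $\lambda^{-5/3}$, not $\lambda^{-2}$.

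The missing idea is to apply the Sobolev inequality to each bilinear product of two sectors at angle $b$ from the axis and separation $\delta$. This is done in the coordinates \eqref{eq:sobolev-coordinates}, in which the $\nu$-derivative of the phase vanishes at the central direction. The second derivative scale is then $\lambda b\delta$, so the $L^2$ Sobolev cost is $\lambda(b\delta)^{1/2}$.

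This, and not a rescaling of the bilinear estimate, is what gives uniformity near the axes. The mixed Hessian determinant is $\sim b\delta^4$ by \eqref{eq:original-hessian-determinant}, and the factor $b^{-1/2}$ in \eqref{eq:bilinear-L2} is sharp (Remark~\ref{rem:spacetime-sharpness}). It must therefore be cancelled by $(b\delta)^{1/2}$. That leaves $\delta^{-3/2}$ at $L^2$, which balances the kernel bound $\delta^2$ at $L^\infty$ exactly at $r=7/2$.

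A consistency check confirms the gap. Your ingredients, spacetime estimates plus isotropic Sobolev, are insensitive to the shear $x\mapsto x-(tc_1,sc_2)$. Your scheme would therefore also prove the vertex bound for $\M^c$ with $c\ne0$, yet there $(2/7,1/7)$ violates the necessary condition $5/p-2/q\le1$.

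The narrow part is also misdirected. Lemma~\ref{lem:diagonal} is an $L^r\to L^r$ bound and gives nothing at $(2/7,1/7)$. The paper instead uses the kernel-based $L^{7/2}\to L^7$ bound of Proposition~\ref{prop:short} for caps of width $\lambda^{-1/2}$. Because the supremum destroys output orthogonality, it replaces a Whitney decomposition by the pointwise inequality of Lemma~\ref{lem:brd-nrw}, iterated down to width $\lambda^{-1/2}$, together with C\'ordoba's square function. This costs a $\lambda^\varepsilon$ loss at the vertex, and decay is recovered only by interpolating with Lemma~\ref{lem:diagonal}.

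\textbf{Necessity.} Your examples do not cut out $\Delta$.
\begin{itemize}
\item The small ball gives $\M f\gtrsim\delta$ on a set of area $\sim1$, hence only $p\ge2$. The edge $1/q\le1/p$ comes instead from a large disc, since $\M\mathbf 1_{B(0,R)}=1$ on $B(0,R-2)$.
\item The $\delta$-neighborhood of an ellipse correctly gives $1/p\le2/q$.
\item The $\delta\times\delta^{1/2}$ rectangle is at the wrong scale. With two free parameters you can match the point and tangent of $x+E_{t,s}$ at the center of the rectangle for every $x$ in a set of area $\sim1$. So $\M f\gtrsim\delta^{1/2}$ there, and you only get $\delta^{1/2}\lesssim\delta^{3/(2p)}$, i.e.\ $p\ge3$, independently of $q$.
\end{itemize}
The edge $4/p\le1+1/q$ through $(2/7,1/7)$ and $(1/3,1/3)$ needs the $\delta$-neighborhood of a circular arc of length $h=\delta^{1/3}$. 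The parameters fix the point and tangent, and the curvature is confined to a window of width $h$. This leaves a set of centers of area $\sim h$ on which $\M f\gtrsim h$, and so $h^{1+1/q}\lesssim(\delta h)^{1/p}=h^{4/p}$.

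Finally, for $p\le3$ only the weighted form of your cusp example can work. An indicator $\mathbf 1_E$ with $|E|\to0$ and $\M\mathbf 1_E\gtrsim1$ on a set of area $\gtrsim1$ would contradict the known $L^p$ bound for $p>3$. The paper takes $f_\nu=(w\sqrt{\log(1/w)})^{-1}$ on the quadratic cusp about the common tangent, cut off at $w\ge\nu$. Then $\|f_\nu\|_3$ stays bounded while $\M f_\nu\gtrsim\sqrt{\log(1/\nu)}$ on a fixed set of positive area.
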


Since $\M$ dominates $\M_{\mathrm{circ}}$, the region $\Delta$ lies in
the triangle for $\M_{\mathrm{circ}}$ recalled in
Section~\ref{sec:operator}: the vertices $(2/5,1/5)$ and $(1/2,1/2)$ are
replaced by $(2/7,1/7)$ and $(1/3,1/3)$. The diagonal part of
Theorem~\ref{thm:main} is the $L^p$ range recalled there, together with
the $L^\infty$ bound from the unit mass of $\sigma_{t,s}$; the new part
is the off-diagonal range.

The condition $p>3$ cannot be relaxed even to weak type. For
$1\le p\le3$, the weak type estimate
\[
 \|\M f\|_{L^{q,\infty}(\R^2)}\lesssim\|f\|_{L^p(\R^2)}
\]
fails for every $1\le q\le\infty$, even on nonnegative smooth compactly
supported inputs; here $L^{\infty,\infty}=L^\infty$. This is proved by a
smooth cusp construction at the end of Section~\ref{sec:necessary-conditions}.

Theorem~\ref{thm:main} rests on the following frequency-localized estimate at the
vertex $(2/7,1/7)$. Let $P_\lambda$ be the Littlewood--Paley projection
to $|\xi|\sim\lambda$ (see Section~\ref{sec:frequency-reduction}), and set
\[
 \M^c_\lambda f=\M^c(P_\lambda f),
\]
with $\M_\lambda=\M^0_\lambda$.

\begin{theorem}\label{thm:frequency}
For every $\varepsilon>0$ there is a constant $C_\varepsilon$ such that
\begin{equation}\label{eq:frequency-bound}
 \|\M_\lambda f\|_7\le C_\varepsilon\lambda^\varepsilon\|f\|_{7/2}
\end{equation}
for all dyadic $\lambda\ge2$.
\end{theorem}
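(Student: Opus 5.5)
The plan is to follow the Schlag--Sogge scheme, adapted to two parameters through the Fourier representation of \cite{LLOelliptic}. Write $\widehat{\sigma_{t,s}}(\xi)=\widehat\sigma(t\xi_1,s\xi_2)$ and expand $\widehat\sigma$ asymptotically, so that $\A_{t,s}P_\lambda f$ is a sum of two oscillatory pieces of the form
\[
\int e^{i(x\cdot\xi\pm|(t\xi_1,s\xi_2)|)}a_\pm(t,s,\xi)\hat f(\xi)\,d\xi,
\]
with symbols of order $\lambda^{-1/2}$. Next apply the two-parameter Sobolev embedding in $(t,s)$: $\sup_{\I^2}|F|$ is controlled by $\|F\|_{L^q(\I^2)}$ together with $\|\partial_t F\|$, $\|\partial_s F\|$ and $\|\partial_t\partial_s F\|$. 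Each derivative costs roughly $\lambda|\xi_1|/|\xi|$ or $\lambda|\xi_2|/|\xi|$, and these factors are balanced against $\lambda^{-1/q}$ gains in each variable. This turns \eqref{eq:frequency-bound} into a space--time $L^{7/2}\to L^7(\R^2\times\I^2)$ estimate for the operator $U_\lambda f(x,t,s)$ with a loss at most $\lambda^{\varepsilon}$ times the appropriate power of $\lambda$.

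Decompose the frequency annulus angularly. Where $\xi$ points away from the axes, both parameters are nondegenerate. Where $\xi$ lies in a sector of angle $\sim2^{-j}$ around, say, the $\xi_1$ axis, the $s$-derivative is only of size $2^{-j}\lambda$, so the phase is effectively one-parameter at scale $2^{-j}\lambda$ in $s$. One rescales anisotropically, using the fact that the problem is invariant under $\xi_2\mapsto2^{j}\xi_2$ up to harmless changes of the symbol. The estimate is then proved by a bilinear-to-linear reduction. Write $|U_\lambda f|^2$ as a Whitney-type sum of products $U_\lambda f_\nu\,U_\lambda f_{\nu'}$ over pairs of angular sectors separated at angle $\sim2^{-k}$, plus a diagonal remainder.

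The diagonal remainder, made up of frequencies within a single narrow sector, is handled by parabolic rescaling, induction on $\lambda$, and the local smoothing estimates of Lemma~\ref{lem:diagonal}, which rely on \cite[Proposition 2.3]{LLOelliptic}. For separated sectors, invoke a bilinear estimate of $L^2\times L^2\to L^{q/2}$ type with $q$ near $7/2$ for the space--time surface $(\xi,|(t\xi_1,s\xi_2)|)$ viewed as a cone in $(x,t,s)$. This estimate comes from the bilinear restriction theorem for cones (Wolff, Tao), applied after freezing one of $t,s$ and integrating in the other, and it should be uniform in the sector position. Interpolate this with the trivial $L^1\to L^\infty$ and $L^2$ bounds, then sum over $k$ and over sectors with an $\varepsilon$-loss from the $\log\lambda$ many scales. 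The exponent pair $(7/2,7)$ should come out precisely where the bilinear gain balances the two-parameter Sobolev cost; this is the counterpart of $(5/2,5)$ in one parameter.

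The main obstacle is the uniformity of the bilinear estimate as the sectors approach the coordinate axes. There the second fundamental form of the surface in one of the parameter directions degenerates like $2^{-j}$, and the transversality constants deteriorate. To handle this, first apply the anisotropic rescaling adapted to the sector, mapping $\xi_2\mapsto2^{j}\xi_2$ and $s\mapsto$ an appropriate rescaled variable. I would check that the rescaled phase has uniformly nondegenerate curvature and separation constants. I would also check that the Jacobian factors from rescaling $L^{7/2}$ and $L^7$ produce a nonpositive power of $2^{j}$, which is where the specific vertex $(2/7,1/7)$ should be forced. If the power is only zero, the sum over $j\lesssim\log\lambda$ costs just $\lambda^\varepsilon$, which is acceptable.
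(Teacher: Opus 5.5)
Your second step is where the plan breaks. Applying the $(t,s)$ Sobolev embedding before any angular decomposition reduces \eqref{eq:frequency-bound} to a spacetime estimate. On sectors away from the axes, where both of your costs are $\sim\lambda$, it becomes $\|\A_{t,s}P_\lambda f\|_{L^7(\R^2\times\I^2)}\lesssim\lambda^{-2/7+\varepsilon}\|f\|_{7/2}$, and this estimate is false.

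Take a frequency-localized version of the thin-strip example of Section~\ref{sec:necessary-conditions}: the $\lambda^{-1}$-neighborhood of an arc of length $h=\lambda^{-1/3}$ at direction $\pi/4$, modulated at frequency $\lambda$ in the normal direction. Then $\|f\|_{7/2}\sim h^{8/7}$. Also $|\A_{t,s}P_\lambda f(x)|\gtrsim h$ for $(t,s)\in\Pi_h$ and $x$ in a $\lambda^{-1}\times h^2$ rectangle about $x(t,s)$, a set of measure $\sim h^6$. So the left side is $\gtrsim h^{13/7}$ while the right side is $\lambda^{\varepsilon}h^2$, a loss of $\lambda^{1/21}$.

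The maximal estimate survives because at each good $x$ the good parameter set has measure $h^5$, not $\lambda^{-2}=h^6$. On a sector of width $\delta$ about $\theta_0\sim b$, the cost of the second parameter is the \emph{variation} of the phase derivative across the sector, $\lambda b\delta$. This is visible only in coordinates adapted to $\theta_0$, namely $(\mu,\nu)=(H_{t,s}(\theta_0),(s^2-t^2)/H_{t,s}(\theta_0))$, in which $\partial_\nu$ of the phase vanishes at $\theta_0$ for every radius. Even with the correct bilinear estimate, the raw $(t,s)$ costs $\lambda,\lambda b^2$ lose $(b/\delta)^{2/7}$ at $r=7/2$. The costs you wrote, $\lambda,\lambda b$, are those of the translated case and move the balance to $(1/4,1/8)$ (Section~\ref{sec:translated-sufficiency}).

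Since the right scale depends on the pair of sectors, Sobolev must be applied after the decomposition, to a product of two separated pieces at a common parameter. This is why the paper uses Lemma~\ref{lem:brd-nrw} pointwise at fixed $(x,t,s)$, keeps the common supremum in $\mathcal B_{\lambda,I,I'}$, and sums inputs with C\'ordoba's square function. A Whitney sum of $|U_\lambda f|^2$ would instead have to be summed in $L^{7/2}$. There the almost-orthogonality behind bilinear-to-linear arguments, an $L^p$ statement for $p\le2$, is unavailable.

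The bilinear input you propose is also of the wrong kind. The map $f\mapsto\A_{t,s}P_\lambda f(x)$ has two-dimensional input and four-dimensional output, so it is not a cone extension operator. Freezing one parameter, applying Wolff--Tao in three dimensions and integrating trivially in the other gains nothing from that parameter, while you still pay for its supremum. At $L^2$, slicing the example of Remark~\ref{rem:spacetime-sharpness} shows that any slice-by-slice bound is worse than Proposition~\ref{prop:spacetime} by $(\lambda b\delta^3)^{1/2}$ whenever $\lambda b\delta^3\ge1$.

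What is needed is a genuinely four-dimensional $L^2$ estimate. The product of two branches is an oscillatory integral from $(\xi,\eta)\in\R^4$ to $(x,t,s)\in\R^4$, and its mixed Hessian has determinant $\sim b\delta^4$. In jet coordinates rescaled at $\lambda,\lambda\delta,\lambda\delta^2,\lambda b\delta^3$ it is uniformly nondegenerate, so H\"ormander's bound gives $b^{-1/2}\delta^{-2}$. With the symbol factor $\lambda^{-1}$ and the adapted Sobolev cost $\lambda(b\delta)^{1/2}$, this gives $\delta^{-3/2}$ at $L^2$. Against $\delta^2$ at $L^\infty$, we get $\delta^{2-7/r}=1$ exactly at $r=7/2$.

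The factor $b$ cancels identically, so no anisotropic rescaling near the axes is needed. In any case $\xi_2\mapsto2^j\xi_2$ is not harmless, since it pushes the semiaxes out of $\I^2$. The narrow part also needs no induction or local smoothing: the iteration stops at width $\lambda^{-1/2}$, where Proposition~\ref{prop:short} gives $\lambda^{-1/14}b^{1/7}\le1$.
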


This is the two-parameter analogue of the $L^{5/2}\to L^5$ estimate at
the vertex $(2/5,1/5)$ for the circular maximal function. Interpolation
with the diagonal decay of Lemma~\ref{lem:diagonal} gives the open triangle.

\begin{figure}[htbp]
\centering
\begin{tikzpicture}[x=14cm,y=14cm,>=Stealth]
 \coordinate (O) at (0,0);
 \coordinate (V) at (2/7,1/7);
 \coordinate (D) at (1/3,1/3);
 \fill[black!12] (O)--(V)--(D)--cycle;
 \draw[->] (-.025,0)--(.40,0) node[right] {$1/p$};
 \draw[->] (0,-.025)--(0,.39) node[above] {$1/q$};
 \draw[thick] (O)--(D);
 \draw[thick,densely dotted] (O)--(V)--(D);
 \fill (O) circle (1.5pt);
 \draw[fill=white] (V) circle (1.8pt);
 \draw[fill=white] (D) circle (1.8pt);
 \node[below left] at (O) {$(0,0)$};
 \node[right=3pt] at (V) {$\left(\frac27,\frac17\right)$};
 \node[above right=1pt] at (D) {$\left(\frac13,\frac13\right)$};
\end{tikzpicture}
\caption{The shaded interior is the $L^p$-improving range proved here.
Boundedness also holds on the solid diagonal, including the origin and
excluding $(1/3,1/3)$. The dotted edges and their common vertex remain
open; weak type fails at $(1/3,1/3)$.}\label{fig:mapping-region}
\end{figure}

The following questions remain open. (i) Restricted weak type and
strong type at the vertex $(2/7,1/7)$; for the circular maximal
function, the corresponding estimates at $(2/5,1/5)$ were proved in~\cite{Lee} and~\cite{LZ}, respectively. The loss $\lambda^\varepsilon$
in Theorem~\ref{thm:frequency} is the obstacle. (ii) Strong type on the two off-diagonal
edges of $\Delta$, the open segments joining $(2/7,1/7)$ to $(0,0)$ and
to $(1/3,1/3)$, dotted in Figure~\ref{fig:mapping-region}. (iii) Restricted weak type at $(1/3,1/3)$: the cusp
construction in Section~\ref{sec:necessary-conditions} uses inputs that are not characteristic
functions, so it does not decide the restricted weak type estimate
$L^{3,1}\to L^{3,\infty}$.

\subsection{The translated case}\label{sec:translated-result}
We next consider axis dilations of a circle centered away from the origin.
Fix $c\in\R^2\setminus\{0\}$ and use the operators
$\A^c_{t,s}$ and $\M^c$ defined in Section~\ref{sec:operator}. Define
\[
 \Delta^\ast=\conv\left\{(0,0),\left(\frac14,\frac18\right),
                              \left(\frac13,\frac13\right)\right\},
\]
which does not depend on $c$.
Our second result is the analogue of Theorem~\ref{thm:main} for $\M^c$;
the range is again sharp up to the boundary.

\begin{theorem}\label{thm:translated}
If $(1/p,1/q)\in\operatorname{int}\Delta^\ast$, or if $3<p=q\le\infty$,
then
\begin{equation}\label{eq:cor-bound}
 \|\M^cf\|_{L^q(\R^2)}\le C_{c,p,q}\|f\|_{L^p(\R^2)}.
\end{equation}
Conversely, if \eqref{eq:cor-bound} holds for some $1\le p,q\le\infty$, then
$(1/p,1/q)\in\Delta^\ast$ and $p>3$.
\end{theorem}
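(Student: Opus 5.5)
The proof of Theorem~\ref{thm:translated} will parallel that of Theorem~\ref{thm:main}, with the vertex $(2/7,1/7)$ replaced by $(1/4,1/8)$.

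\emph{Sufficiency.} The main step is a frequency-localized estimate at the new vertex:
\[
 \|\M^c_\lambda f\|_{8}\le C_{c,\varepsilon}\lambda^{\varepsilon}\|f\|_{4}
 \qquad\text{for all dyadic }\lambda\ge 2.
\]
For the diagonal, Lemma~\ref{lem:translated-diagonal} gives decay $\|\M^c_\lambda f\|_p\lesssim\lambda^{-\delta(p)}\|f\|_p$ for $p>3$. The trivial bound $L^1\to L^\infty$ with loss $\lambda$, together with $L^\infty$ boundedness, supplies the edge at $(0,0)$. Interpolating these three pieces and summing the dyadic pieces gives every point of $\operatorname{int}\Delta^\ast$. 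The low-frequency part is handled by the Hardy--Littlewood maximal function. The diagonal case $3<p=q\le\infty$ follows directly from the diagonal decay.

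To prove the vertex estimate I would reuse the bilinear machinery behind Theorem~\ref{thm:frequency}.
\begin{enumerate}
 \item Write $\A^c_{t,s}$ on the Fourier side. The phase becomes $x\cdot\xi - tc_1\xi_1 - sc_2\xi_2 \pm |(t\xi_1,s\xi_2)|$, and the translation adds the linear term $-(tc_1\xi_1+sc_2\xi_2)$.
 \item Apply the parameter Sobolev embedding in $(t,s)$.
 \item Decompose into angular sectors and estimate the separated-sector bilinear part with the bilinear estimate.
 \item Treat the diagonal-sector part by rescaling or induction.
\end{enumerate}
The key difference from the centered case concerns the mixed Hessian in $(t,s)$ of the phase. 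In the centered case the rotational curvature degenerates as the sectors approach the axes. For $c\ne0$ the extra linear terms change this: the map $(t,s)\mapsto\nabla_\xi\phi$ has one fewer degenerate direction in some regimes and more in others. I expect the exponent count to give the $L^4\to L^8$ vertex through the bound $(1/4,1/8)$ = (scaling condition) $\cap$ (a Knapp-type condition that is now weaker or stronger).

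\emph{Necessity.} I would adapt the examples from Section~\ref{sec:necessary-conditions}.
\begin{itemize}
 \item \textbf{The line through $(0,0)$ and $(1/4,1/8)$, i.e.\ $q\ge 2p$.} Take $f$ to be the characteristic function of a $\delta$-neighborhood of a piece of a translated ellipse. Because the centers $(tc_1,sc_2)$ move, the set where $\M^cf\gtrsim\delta$ (or $\gtrsim\delta^{1/2}$) has measure comparable to a different power of $\delta$ than in the centered case. Tuning this gives $1/q\le 1/(2p)$.
 \item \textbf{The edge from $(1/4,1/8)$ to $(1/3,1/3)$.} Use a Knapp or focusing example adapted to the moving center: $f$ is a $\delta\times\delta^{1/2}$ plate, and the two-parameter family of ellipses through a point sweeps a region of the appropriate size.
 \item \textbf{The failure for $p\le 3$.} The cusp construction from the centered case should transfer, since near a tangency point the translated family still has a two-parameter tangency. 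The translation only moves the cusp.
\end{itemize}

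The main obstacle will be the vertex estimate. One must verify that the bilinear estimate remains uniform for the translated phase, where the linear terms $tc_1\xi_1$ and $sc_2\xi_2$ break the homogeneity used in the rescaling near the axes. My first attempt would be to change variables $\xi\mapsto(\xi_1/t,\xi_2/s)$ so that the translation becomes a fixed shift $c\cdot\xi$. This reduces the problem to the centered bilinear estimate, at the cost of a modulation that is harmless in $L^p$. The necessary counterexample at exactly $(1/4,1/8)$ should then pin down that this vertex is sharp.
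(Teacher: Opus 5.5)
\textbf{Sufficiency.} Your outer structure matches the paper: an $L^4\to L^8$ vertex estimate for $\M^c_\lambda$ with $\lambda^\varepsilon$ loss, the diagonal decay of Lemma~\ref{lem:translated-diagonal}, then interpolation and dyadic summation. But the step that actually produces the vertex $(1/4,1/8)$ is missing, and the mechanism you name is the wrong one.

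The bilinear spacetime estimate does not change for $c\ne0$. In $\partial_{(\xi,\eta)}\nabla_{x,t,s}\Psi$, the rows for $t,s$ change only by $-c_1$ and $-c_2$ times the rows for $x_1,x_2$. So the mixed Hessian determinant ($\sim b\delta^4$) is identical, and the measure-preserving shear $x\mapsto x-(tc_1,sc_2)$ reduces the spacetime bound to Proposition~\ref{prop:spacetime} verbatim.

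Treating the translation as a harmless modulation is legitimate only for such spacetime norms. The translation depends on $(t,s)$, so it does not commute with the supremum at fixed $x$. If it did, the centered argument would give $L^{7/2}\to L^7$ for $\M^c_\lambda$, hence boundedness on $\operatorname{int}\Delta$. That contradicts the necessary condition $5/p-2/q\le1$, since $5/p-2/q=8/7$ at $(2/7,1/7)$.

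The real change is in the parameter Sobolev step (Lemma~\ref{lem:parameter-sobolev}).
\begin{itemize}
\item In the centered case, the coordinates $\mu=H_{t,s}(\theta_0)$, $\nu=(s^2-t^2)/\mu$ make $\partial_\nu\Phi_{t,s}$ vanish at the center angle. This is what gives the small second scale $\lambda b\delta$.
\item For $\varphi^c_{t,s}=\Phi_{t,s}-tc_1\xi_1-sc_2\xi_2$ this cancellation is destroyed: the extra term $-\mu c_2\xi_2\cos^2\theta_0/(2s)$ has size $\lambda b$ in general. With derivatives taken at fixed $x$, the scales become $\lambda$ and $\lambda b$.
\item The Sobolev cost $\lambda b^{1/2}$, in place of $\lambda(b\delta)^{1/2}$, turns the $L^2$ bilinear maximal bound into $\delta^{-2}$. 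This balances the unchanged $L^\infty$ bound $\delta^2$ exactly at $L^4\times L^4\to L^4$, i.e.\ at $(1/4,1/8)$.
\item The terminal caps of width $\lambda^{-1/2}$ must likewise use scales $\lambda,\lambda$. This gives $\lambda^{-3/14}$ at $L^7$, and after interpolation with the $L^1\to L^\infty$ kernel bound, $\lambda^{-1/16}$ from $L^4$ to $L^8$.
\end{itemize}
Without this count, your plan either leaves the exponent as an expectation or, run as in the centered case, proves a false estimate.

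\textbf{Necessity.} Your example for the edge from $(1/4,1/8)$ to $(1/3,1/3)$ cannot give it. For a $\delta\times\delta^{1/2}$ plate, $\M^cf\lesssim\delta^{1/2}$ everywhere and $\M^cf$ lives in a fixed bounded set, while $\|f\|_p\sim\delta^{3/(2p)}$. So such tests can only rule out $p<3$.

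The edge $5/p-2/q\le1$ needs fourth-order contact, and this is where $c\ne0$ enters. The paper's construction:
\begin{itemize}
\item Take $f_h$ equal to one on an $O(h^4)$-neighborhood of the arc $z=\varepsilon_0u^2/2$, $|u|\le h$, with $\varepsilon_0=\pm1$ chosen so that $\varepsilon_0c_2\ge0$. The support has area $\sim h^5$.
\item Evaluate at $x_{\mathrm{vert}}(t,s)=(c_1t,(c_2+\varepsilon_0)s)$, where the averaging curve has a vertex at the origin. There the cubic term vanishes by symmetry.
\item Restrict to $s=t^2(1+v)$, $0\le v\le h^2$. This matches the curvature to within $h^2$, so the arc stays in the support and $\M^cf_h\ge h/(2\pi)$.
\item These outputs fill area $\sim h^2$, because $(t,v)\mapsto x_{\mathrm{vert}}$ has Jacobian $c_1(c_2+\varepsilon_0)t^2\ne0$ (swap coordinates if $c_1=0$). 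For $c=0$ they would all lie on the $x_2$-axis.
\item Comparing $h^{1+2/q}$ with $h^{5/p}$ gives the edge.
\end{itemize}

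Your first condition is also backwards. The requirement is $1/p\le2/q$, i.e.\ $q\le2p$; $\Delta^\ast$ lies above the line $1/q=1/(2p)$. It follows from the same single-ellipse test as in Section~\ref{sec:necessary-conditions}, with no new power of $\delta$. You also omit the large-disc test giving $1/q\le1/p$.

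For the cusp, the input is unchanged and only the outputs move, to $x(t,s)+(tc_1,sc_2)$. You still need to check that this map has nonvanishing Jacobian; the paper arranges this by reflecting to $c_1,c_2\le0$.
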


Since $(1/4,1/8)$ lies on the edge of $\Delta$ joining $(0,0)$ and
$(2/7,1/7)$, and the line $5/p-2/q=1$ passes through $(1/4,1/8)$ and
$(1/3,1/3)$, we have
\[
 \Delta^\ast=\Delta\cap\bigl\{(1/p,1/q):5/p-2/q\le1\bigr\}.
\]
The additional condition comes from higher-order contact between the
dilated translated circles and short parabolic arcs, which the
construction in Section~\ref{sec:translated-necessity} detects. On the
sufficiency side, the same feature produces a larger Sobolev loss when
the supremum over $(t,s)$ is taken. See Section~\ref{sec:strategy} for
both points.

The condition $p>3$ cannot be relaxed even to weak type. The cusp
construction in Section~\ref{sec:translated-necessity} shows that, for
$1\le p\le3$, the estimate
\[
 \|\M^c f\|_{L^{q,\infty}(\R^2)}\lesssim\|f\|_{L^p(\R^2)}
\]
fails for every $1\le q\le\infty$, even on nonnegative smooth compactly
supported inputs. The other necessary conditions are also proved in
Section~\ref{sec:translated-necessity}.

Theorem~\ref{thm:translated} is proved in Section~\ref{sec:translated-case}.
It rests on the frequency-localized estimate of
Proposition~\ref{prop:translated}: for every $A>0$, $\varepsilon>0$,
and dyadic $\lambda\ge2$,
\[
 \|\M^c_\lambda f\|_8
 \lesssim_{A,\varepsilon}\lambda^\varepsilon\|f\|_4,
 \qquad |c|\le A.
\]
Interpolation with the diagonal decay of
Lemma~\ref{lem:translated-diagonal} gives the open triangle, and the
unit mass of the averages gives the $L^\infty$ bound.
As in the centered case, the following remain open: restricted weak
type and strong type at the vertex $(1/4,1/8)$, strong type on the open
segments joining $(1/4,1/8)$ to $(0,0)$ and to $(1/3,1/3)$, and
restricted weak type at $(1/3,1/3)$.

\subsection{Strategy of the proof}\label{sec:strategy}
The proofs of Theorems~\ref{thm:main} and~\ref{thm:translated} follow
the same two reductions. First, interpolating the vertex frequency
estimate with diagonal frequency decay gives summable decay in the open
triangle: Theorem~\ref{thm:frequency} and Lemma~\ref{lem:diagonal} in
Section~\ref{sec:frequency-reduction}, and
Proposition~\ref{prop:translated} and Lemma~\ref{lem:translated-diagonal}
in Sections~\ref{sec:translated-sufficiency} and~\ref{sec:translated-diagonal}, respectively. Second, an angular
decomposition based on the pointwise inequality of
Lemma~\ref{lem:brd-nrw}, a two-term variant of Ham--Lee~\cite[Lemma 2.8]{HL}, and C\'ordoba's square-function estimate in
Lemma~\ref{lem:cordoba} reduces the vertex estimates to separated-support
and short-interval estimates: Propositions~\ref{prop:separated}
and~\ref{prop:short} in the centered case, and
\eqref{eq:translated-separated} and \eqref{eq:translated-short} in the
translated case.

\subsubsection*{The bilinear estimate near the axes}
At the vertex exponents, the loss from the suprema over the two
parameters is balanced by the gain in the bilinear estimate of
Proposition~\ref{prop:spacetime} for separated angular supports.
Bilinear estimates for separated frequency supports go back to~\cite{TVV,TV}; for the circular maximal function, Lee~\cite{Lee} used
the bilinear cone restriction estimates of Wolff~\cite{Wolff} and Tao~\cite{TaoCone} to obtain endpoint $L^p$--$L^q$ bounds. The main
difficulty is uniformity as the supports approach a coordinate axis.
Let $b$ be the angular distance of the two frequency sectors from the
axis and $\delta$ their angular separation. The mixed Hessian of the
bilinear phase degenerates there: its determinant is comparable to
$b\delta^4$. Output coordinates given by the coefficients of an exact
third-order expansion of the phase in the angle, suitably rescaled,
cancel this factor at the cost of the factor $b^{-1/2}\delta^{-2}$ in
\eqref{eq:spacetime-bound}. The factor $b$ reflects the angular
derivative of the radius of curvature of the ellipse, which vanishes on
the axes by reflection symmetry; see
Section~\ref{sec:input-jet-coordinates}.

\subsubsection*{Comparison with cinematic curves}
This degeneracy is also visible in physical space. The ellipses
$\{x+(t\cos\theta,s\sin\theta):\theta\in[0,2\pi)\}$,
$(x,t,s)\in\R^2\times\I^2$, form a four-parameter family that is not
cinematic in the sense of Zahl~\cite[Definitions 1.1 and 1.7]{Zahl} at
the axis vertices. For $1<t<\sqrt2$ and $s=t^2$, the upper graph
translated to the origin is
$h_t(u)=-u^2/2-u^4/(8t^2)+O(u^6)$, so
$(h_t,h'_t,h''_t,h_t^{(3)})(0)=(0,0,-1,0)$ along a nonconstant family.
The same holds for $\M^c$, whose averaging curves form the same
family, since the translation $(tc_1,sc_2)$ is absorbed into $x$. Thus~\cite[Theorem 1.8]{Zahl} does not apply
directly, and Proposition~\ref{prop:spacetime} is proved by a
Fourier-analytic argument. A normalization combining parabolic input
rescaling with a projective change of the output variables yields
uniformly cinematic families on caps approaching the axis vertices;
this is developed in forthcoming work~\cite{LLLOcin}. See
Section~\ref{sec:related}.

\subsubsection*{Differences in the translated case}
For $c\ne0$, the translation $(tc_1,sc_2)$ preserves the bilinear and
kernel estimates, but the supremum over $(t,s)$ incurs a larger Sobolev
loss (Section~\ref{sec:translated-sufficiency}); this moves the vertex
to $(1/4,1/8)$. Correspondingly, the test function $f_h$ of
Section~\ref{sec:translated-necessity}, supported in an
$O(h^4)$-neighborhood of a parabolic arc of length $O(h)$, satisfies
$\M^cf_h\gtrsim h$ on a set of area comparable to $h^2$, with constants
depending on $c$; this construction degenerates when $c=0$, since the
Jacobian $c_1(c_2+\varepsilon_0)t^2$ of its output map, with
$\varepsilon_0=\pm1$, then vanishes. The diagonal
estimate for each fixed $c$, including $c=0$, is proved in
Section~\ref{sec:translated-diagonal-estimates} by verifying the
hypotheses of the incidence estimate of Pramanik--Yang--Zahl~\cite{PYZ}
for the translated ellipses, with an additional rescaling near finitely
many degenerate points when $|c|\ge1$; local smoothing then gives
Lemma~\ref{lem:translated-diagonal}.

\subsection{Related results and extensions}\label{sec:related}
For the elliptic maximal function, in which rotations of the ellipses
are also allowed, Erdo\u gan~\cite{Erdogan} proved Sobolev bounds and
showed that no $L^p$ bound holds for $p\le4$, and Lee--Lee--Oh~\cite[Theorem 1.1]{LLOelliptic} proved $L^p$ boundedness for $p>12$.
Maximal functions with several independent parameters have been studied
for averages over surfaces and curves; see, for example,
\cite{Cho,Heo,LLtori,MR,MRZ}. For the analogous strong spherical
maximal functions in higher dimensions, see~\cite{Chen,HZ,LLOspherical},
and~\cite{LLOellr} for $\ell^r$ spheres, where $L^p$--$L^q$ bounds for
the local operator are also obtained.

Our argument is two-dimensional and does not determine the sharp
$L^p$--$L^q$ range in higher dimensions. Lee--Lee--Oh~\cite{LLOellr} obtain
partial bounds in higher dimensions using $L^2$--$L^q$ decay estimates.
For Euclidean spheres, their results are sharp up to endpoints for
$p\ge2$ and $d\ge4$.
These results do not determine the sharp range for translated spheres.

For axis dilations of general curves, the degeneracy set of the bilinear
mixed Hessian presents an additional difficulty. Even a small smooth
perturbation of the circle can move this set away from the coordinate
axes. Extending Theorem~\ref{thm:main} by the present method would therefore require
a decomposition adapted to the perturbed degeneracy set and uniform
estimates replacing those in Section~\ref{sec:mixed-hessian}. We do not pursue such
extensions here.

In forthcoming work~\cite{LLLOcin}, we build on Zahl's
incidence-geometric framework~\cite{Zahl} to obtain $L^p$-improving
bounds for multiparameter cinematic maximal functions with
two-dimensional output, under suitable jet nondegeneracy and output
transversality hypotheses. For full parameter patches with
nonvanishing averaging density, the resulting range is sharp up to
the boundary.

\subsubsection*{Organization}
Sections~\ref{sec:frequency-reduction}--\ref{sec:maximal-estimates}
prove the centered estimates, and Section~\ref{sec:necessary-conditions}
establishes the necessary conditions.
Section~\ref{sec:translated-diagonal-estimates} proves the diagonal
estimates for fixed $c$, and Section~\ref{sec:translated-case}
completes the proof of Theorem~\ref{thm:translated}.
Appendix~\ref{app:selected-curves} contains the proof of the
selected-curves estimate and the regularized bound of
Corollary~\ref{cor:translated-regularized}.

\subsubsection*{Notation}
We use $\widehat f(\xi)=\int e^{-ix\cdot\xi}f(x)\,dx$ and write
$\|f\|_r=\|f\|_{L^r(\R^2)}$ when the domain is unambiguous.
Throughout, $\lambda=2^j\ge2$ denotes the radial frequency, $b$ the
angular distance from a coordinate axis, and $\delta$ the separation
scale of two angular intervals (in Sections~\ref{sec:angular-decomposition}--\ref{sec:maximal-estimates} and \ref{sec:translated-sufficiency}); in Section~\ref{sec:translated-diagonal-estimates} and Appendix~\ref{app:selected-curves}, $\delta$ denotes a tube width. We write
$e(\theta)=(\cos\theta,\sin\theta)$. The semiaxes are always denoted
by $t,s$. We write $A\lesssim B$ when $A\le CB$, with $C$ independent
of the frequency and angular scales, and $A\sim B$ when both
inequalities hold. Dependence on the subdivision parameter $K$ of
Sections~\ref{sec:angular-decomposition} and \ref{sec:translated-sufficiency} is
indicated by $\lesssim_K$. All functions may initially be taken in the
Schwartz class.

\section{Frequency reduction in the centered case}\label{sec:frequency-reduction}
This section reduces the sufficiency part of Theorem~\ref{thm:main} to
Theorem~\ref{thm:frequency}: combined with the diagonal decay of
Lemma~\ref{lem:diagonal}, it yields every point of the open triangle.

Choose a smooth inhomogeneous radial partition of unity
\[
 1=\psi_0(\xi)+\sum_{j\ge1}\psi(2^{-j}|\xi|),
\]
where $\supp\psi\subset(1/2,2)$, and let $P_{\le1}$ and $P_\lambda$, $\lambda=2^j$, be the Fourier
multipliers with symbols $\psi_0$ and $\psi(|\xi|/\lambda)$. With these
projections, the frequency-localized operators of
Section~\ref{sec:main-result} are
\begin{equation}\label{eq:frequency-localization}
 \M^c_\lambda f(x)=\sup_{(t,s)\in\I^2}|\A^c_{t,s}P_\lambda f(x)|.
\end{equation}
We use \eqref{eq:frequency-localization} for every $c\in\R^2$; the
estimates in this section concern the centered case $c=0$.

\subsection{Diagonal frequency decay}\label{sec:diagonal-decay}
Besides Theorem~\ref{thm:frequency}, the proof uses decay on the
diagonal. This decay is known; see~\cite[(3.14)]{CGY}, where the
smoothing input is~\cite[Proposition~5.2]{CGY}. We include a short
derivation that combines an incidence bound at $L^3$ with the local
smoothing estimate of~\cite{LLOelliptic}.

\begin{lemma}[Diagonal frequency decay]\label{lem:diagonal}
For every $3<r<\infty$ there is $\kappa_r>0$ such that
\begin{equation}\label{eq:diagonal-decay}
 \|\M_\lambda f\|_r\lesssim_r\lambda^{-\kappa_r}\|f\|_r.
\end{equation}
\end{lemma}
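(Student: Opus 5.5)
The plan is to interpolate between two estimates for $\M_\lambda$: a high-exponent bound from local smoothing with a power gain, and an $L^3$ bound with at most an $\varepsilon$-loss from the incidence estimate of Pramanik--Yang--Zahl~\cite{PYZ}. Interpolating (for fixed $\lambda$, e.g.\ after linearizing the supremum, or by interpolating the vector-valued bounds for the space-time functions before the Sobolev embedding) a bound $\lambda^{-\alpha}$ at some large $r_0$ with $\lambda^{\varepsilon}$ at $r=3$ gives $\lambda^{-\kappa_r}$ for every $3<r<r_0$, once $\varepsilon$ is chosen small depending on $r$. For $r\ge r_0$ we interpolate instead with the trivial bound $\|\M_\lambda f\|_\infty\lesssim\|f\|_\infty$, which holds since $P_\lambda$ has an $L^1$-bounded kernel.

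\emph{Step 1 (large exponents).} Write $\A_{t,s}P_\lambda f$ as a sum of two Fourier integral operators with phases $x\cdot\xi\pm|(t\xi_1,s\xi_2)|$, with symbols of order $-1/2$. Apply the two-parameter Sobolev embedding in $(t,s)$:
\[
 \sup_{(t,s)\in\I^2}|F(t,s)|\lesssim \|F\|_{L^r_{t,s}}^{1-2/r}\,\|\partial_t\partial_s F\|_{L^r}^{\ldots}+\cdots,
\]
that is, $W^{2/r+\epsilon,r}(\I^2)\hookrightarrow L^\infty$, whose cost is $\lambda^{2/r}$ up to $\varepsilon$-losses. Combine this with the local smoothing estimate \cite[Proposition 2.3]{LLOelliptic}, which gives
\[
 \|\A_{t,s}P_\lambda f\|_{L^r(\R^2\times\I^2)}\lesssim\lambda^{-\sigma(r)}\|f\|_r
\]
with $\sigma(r)>2/r$ for $r$ large (note that $\sigma(r)\to1/2$). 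This yields $\|\M_\lambda f\|_{r_0}\lesssim\lambda^{-\alpha}\|f\|_{r_0}$ for some $r_0<\infty$ and $\alpha>0$.

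\emph{Step 2 ($L^3$ bound).} We show $\|\M_\lambda f\|_3\lesssim_\varepsilon\lambda^\varepsilon\|f\|_3$. Since $P_\lambda f$ is essentially constant at scale $\lambda^{-1}$, $\M_\lambda f$ is dominated by a maximal average over $\lambda^{-1}$-neighborhoods of the ellipses, plus rapidly decaying tails. Discretize $(t,s)$ at scale $\lambda^{-1}$ and $x$ at scale $\lambda^{-1}$, and reduce to a restricted weak-type / level-set statement. This is a maximal function over a two-parameter family of curves indexed by $(x,t,s)$, and \cite{PYZ} bounds it in $L^3$ with a $\lambda^\varepsilon$ loss, provided the family satisfies their hypotheses. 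Those hypotheses are a three-parameter curvature/nondegeneracy condition on translates of curves indexed by $(t,s)$; here that means distinct ellipses $\{x+(t\cos\theta,s\sin\theta)\}$ cannot be tangent to too high order. This verification is the main obstacle. We would check that for two distinct $(t,s)$ the difference of the local graph functions and their first derivatives cannot vanish simultaneously to second order except on a controlled set, taking care near the axis vertices, where the cinematic condition degenerates as noted in Section~\ref{sec:strategy}. We expect that the weaker hypotheses of \cite{PYZ} (tangencies counted at the $\delta$ scale) still hold there, perhaps after a finite decomposition.

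\emph{Step 3.} Interpolate Steps 1 and 2, and the $L^\infty$ bound, as described above, absorbing $\varepsilon$.
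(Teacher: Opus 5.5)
Your architecture matches the paper's: local smoothing from \cite[Proposition 2.3]{LLOelliptic} plus the parameter Sobolev inequality at a large exponent, a $\lambda^\varepsilon$-loss $L^3$ bound from \cite{PYZ}, and interpolation. Step 1 is correct in substance, but the bookkeeping is off. The averaging symbol has order $-1/2$, and the two-parameter local smoothing bound is $\lambda^{1/2-3/r+\varepsilon}$ for $r\ge12$. Hence $\sigma(r)=3/r-\varepsilon$, which tends to $0$, not $1/2$. Step 1 works because the two-parameter gain $3/r$ beats the Sobolev cost $2/r$ (Lemma~\ref{lem:parameter-sobolev} with $L_1=L_2=\lambda$), leaving $\lambda^{-1/r+\varepsilon}$. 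This needs two further checks: the normalized derivatives $\lambda^{-\alpha-\beta}\partial_t^\alpha\partial_s^\beta\A_{t,s}P_\lambda$ must obey the same local smoothing bound, and a localization step is required, since \cite{LLOelliptic} is local in space.

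The genuine gap is Step 2. It is what yields the range $r>3$, since Step 1 alone gives decay only for large $r$. You leave the PYZ hypotheses unverified, and the condition you propose to check is not the relevant one. The theorem of \cite{PYZ} applies to a cinematic family of $C^2$ graphs, condition \eqref{eq:app-cinematic-metric}: distinct curves must have different $2$-jets at every point. The four-parameter family $x+E_{t,s}$ violates this everywhere by a dimension count. The parameter $2$-jet map goes from $\R^4$ to $\R^3$, so at any point each curve shares its $2$-jet with other members of the family, and no localization or finite decomposition repairs this.

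The missing idea is slicing, as in \cite[Section 3.1]{CGY}:
\begin{enumerate}
\item Freeze one output coordinate: fix $x$ and use $\phi_1(u;y,t,s)=y-sF_t(u)$, or fix $y-x$ for the sheared slice. This leaves a three-parameter graph family and a one-dimensional output coordinate.
\item Check that the $3\times3$ parameter $2$-jet determinant is nonzero (Lemma~\ref{lem:app-nondegenerate}).
\item Use the Lipschitz dependence of the output coordinate on the curve. This gives PYZ's $\alpha=1$ nonconcentration hypothesis for curves selected at $\delta$-separated outputs, and duality then finishes (Lemma~\ref{lem:app-selected-curves}).
\end{enumerate}
For $c=0$, the first slice has determinant proportional to $z^3$, which vanishes at the axis vertex. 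The sheared slice has determinant proportional to $tz^3+s(1-z^2)^{3/2}$, which is nonzero there. The two have no common zero, so finitely many patches cover the ellipse.

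The axis degeneracy noted in Section~\ref{sec:strategy} concerns Zahl's third-order condition for the full four-parameter family \cite{Zahl}. It plays no role on the PYZ route, so your concern there is misplaced. The paper obtains the $L^3$ bound by citing exactly this reduction \cite[Section 3.1 and (3.15)]{CGY}, after covering $\I^2$ by dilated patches near $(1,1)$. Section~\ref{sec:app-slices} reproves it for every fixed $c$.
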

\begin{proof}
The incidence estimate of Pramanik--Yang--Zahl~\cite[Theorem 1.7 and
(1.9)]{PYZ}, through the elliptic reduction in~\cite[Section 3.1 and (3.15)]{CGY},
gives
\[
 \|\M_\lambda f\|_3\lesssim_\varepsilon\lambda^\varepsilon\|f\|_3
 \qquad(\varepsilon>0).
\]
The estimate in~\cite{CGY,PYZ} concerns semiaxes near $(1,1)$. We cover
$\I^2$ by finitely many patches $(t,s)=(t_0t',s_0s')$ with $(t',s')$
near $(1,1)$. The dilation $\xi\mapsto(t_0\xi_1,s_0\xi_2)$ preserves comparable
frequency annuli and has bounded determinant, so a finite
Littlewood--Paley decomposition transfers the localized estimate to
$\M_\lambda$. On each patch, reflections and interchange of coordinates
reduce the ellipses to finitely many upper graph pieces. With
$a=t\cos\theta$, the angular measure on each piece has density
$da/(2\pi\sqrt{t^2-a^2})$, which is smooth and bounded away from the
vertical tangents; this puts each piece in the graph form~\cite[(3.1)]{CGY}.

For $12\le p_0<\infty$, the local smoothing estimate of Lee--Lee--Oh~\cite[Proposition 2.3]{LLOelliptic} (which rests on decoupling inequalities~\cite{BD,BDG}
and the variable-coefficient argument of~\cite{BHS}), combined with the parameter
Sobolev inequality of Lemma~\ref{lem:parameter-sobolev} with $L_1=L_2=\lambda$
and $r=p_0$ (cf.\ \cite[Lemma 2.1]{LLOelliptic}), gives
\[
 \|\M_\lambda f\|_{p_0}
 \lesssim_{p_0,\varepsilon}
 \lambda^{-1/2}\lambda^{2/p_0}
 \lambda^{1/2-3/p_0+\varepsilon}\|f\|_{p_0}
 =\lambda^{-1/p_0+\varepsilon}\|f\|_{p_0}.
\]
The three factors are the order of the averaging symbol, the cost of
the suprema over the two parameters, and the local smoothing bound, respectively.
The parameter Sobolev inequality also requires the corresponding spacetime
bound for $\lambda^{-\alpha-\beta}\partial_t^\alpha\partial_s^\beta
\A_{t,s}P_\lambda f$, with $\alpha,\beta\in\{0,1\}$. All four normalized
derivatives have branch amplitudes satisfying the same symbol bounds, so
the local smoothing estimate applies to each of them as in~\cite[Section 2]{LLOelliptic}. The estimate in~\cite{LLOelliptic} is local in
space. Since $\M_\lambda$ is translation invariant and the kernels of
$\A_{t,s}P_\lambda$, $(t,s)\in\I^2$, decay rapidly away from $B(0,2)$
uniformly in $(t,s)$, a standard localization argument extends it to
the global estimate above. For the prescribed $r>3$, choose
$p_0>\max\{12,r\}$ and $0<\vartheta<1$ such that
\[
 \frac1r=\frac{1-\vartheta}{3}+\frac\vartheta{p_0}.
\]
Interpolation, after linearizing the supremum as in
Section~\ref{sec:sufficiency}, yields
$\|\M_\lambda f\|_r\lesssim\lambda^{\varepsilon-\vartheta/p_0}\|f\|_r$.
Taking $\varepsilon=\vartheta/(2p_0)$ proves \eqref{eq:diagonal-decay}.
\end{proof}

\subsection{Proof of the sufficiency part of Theorem~\ref{thm:main}}\label{sec:sufficiency}
Assume Theorem~\ref{thm:frequency}. By a standard limiting argument (see, e.g., \cite{Stein}),
it suffices to prove the estimates for Schwartz functions $f$. Besides
\eqref{eq:frequency-bound} and \eqref{eq:diagonal-decay}, we use the trivial bound
$\|\M_\lambda f\|_\infty\lesssim\|f\|_\infty$, which follows from
the unit mass of $\sigma_{t,s}$ and the uniform $L^1$ bound for the
kernel of $P_\lambda$. These bounds may be interpolated. Indeed, for
Schwartz $f$ the supremum in \eqref{eq:frequency-localization} is the increasing limit of maxima
over finite subsets of a countable dense subset of $\I^2$. For each
such maximum, a measurable choice of $(t,s)=(t(x),s(x))$ linearizes
the operator, with the same three bounds uniformly in the choice.

Fix $3<r<\infty$, and let $\kappa_r>0$ be as in \eqref{eq:diagonal-decay}. For
$0<\vartheta<1$, define $p_\vartheta,q_\vartheta$ by
\[
 \left(\frac1{p_\vartheta},\frac1{q_\vartheta}\right)
 =(1-\vartheta)\left(\frac27,\frac17\right)
   +\vartheta\left(\frac1r,\frac1r\right).
\]
Interpolation between \eqref{eq:frequency-bound} and \eqref{eq:diagonal-decay} gives
\begin{equation}\label{eq:interpolated-decay}
 \|\M_\lambda f\|_{q_\vartheta}
 \lesssim\lambda^{(1-\vartheta)\varepsilon-\vartheta \kappa_r}
 \|f\|_{p_\vartheta}.
\end{equation}
Taking $\varepsilon<\vartheta \kappa_r/(1-\vartheta)$ makes the exponent
of $\lambda$ in \eqref{eq:interpolated-decay} negative on the open segment joining
$(2/7,1/7)$ and $(1/r,1/r)$. Let
\[
 \Delta_r=\conv\left\{(0,0),\left(\frac27,\frac17\right),
                              \left(\frac1r,\frac1r\right)\right\}.
\]
A further interpolation with the $L^\infty$ bound shows that for every
$(1/p,1/q)\in\operatorname{int}\Delta_r$ there is $\eta_{p,q}>0$ with
\begin{equation}\label{eq:interior-decay}
 \|\M_\lambda f\|_q\lesssim_{p,q}\lambda^{-\eta_{p,q}}\|f\|_p.
\end{equation}
The triangles $\Delta_r$ increase to $\Delta$ as $r\downarrow3$, so
\eqref{eq:interior-decay} holds for every $(1/p,1/q)\in\operatorname{int}\Delta$.

The multipliers $\widehat\sigma_{t,s}(\xi)\psi_0(\xi)$ have uniformly
bounded smooth derivatives and compact support. Their kernels have a
common Schwartz majorant $H$, so $\M P_{\le1}f\le H*|f|$ and Young's
inequality applies since $p\le q$. Sublinearity and \eqref{eq:interior-decay} now yield
\[
 \|\M f\|_q\le\|\M P_{\le1}f\|_q
   +\sum_{j\ge1}\|\M P_{2^j}f\|_q
 \lesssim\Big(1+\sum_{j\ge1}2^{-j\eta_{p,q}}\Big)\|f\|_p.
\]
For finite $p=q>3$, sum the decay in Lemma~\ref{lem:diagonal} directly, with the
same low-frequency bound. The case $(p,q)=(\infty,\infty)$ follows
from the unit mass of $\sigma_{t,s}$. This proves the sufficiency in
Theorem~\ref{thm:main}, subject to Theorem~\ref{thm:frequency}.

\section{Angular decomposition and proof of Theorem~\ref{thm:frequency}}\label{sec:angular-decomposition}

We prove \eqref{eq:frequency-bound} by repeatedly subdividing the frequency support into angular
intervals. At each step, the pointwise inequality of Lemma~\ref{lem:brd-nrw} bounds the
contribution of an interval by those of its subintervals and of pairs of
separated subintervals. Separated pairs are handled by the bilinear estimate
of Proposition~\ref{prop:separated}. The iteration stops at width $\lambda^{-1/2}$, where the
linear estimate of Proposition~\ref{prop:short} applies, and the square-function bound of
Lemma~\ref{lem:cordoba} sums the pieces. After writing the averages as oscillatory
integrals in Section~\ref{sec:fourier-representation}, we state these estimates.
Section~\ref{sec:translated-sufficiency} uses the same iteration with
exponents $4,8$ in place of $7/2,7$.

\subsection{Fourier representation of the averages}\label{sec:fourier-representation}
For $f_\lambda=P_\lambda f$, we write each average $\mathcal A_{t,s}f_\lambda$ as the sum of two
exact oscillatory branches. Keeping the full branch symbols will allow us
to apply the estimates in Section~\ref{sec:maximal-estimates} directly to the averages. Write
$\xi_{t,s}=(t\xi_1,s\xi_2)$ and
\[
  \Phi_{t,s}(\xi)=|\xi_{t,s}|.
\]
Since $\widehat{\sigma_{t,s}}(\xi)=\widehat\sigma(\xi_{t,s})$, Fourier
inversion gives
\[
  \mathcal A_{t,s}f(x)
  =(2\pi)^{-2}\int e^{ix\cdot\xi}\widehat\sigma(\xi_{t,s})
     \widehat f(\xi)\,d\xi.
\]
Here $\widehat\sigma(\xi)=J_0(|\xi|)$. We use the exact symbol form
\begin{equation}
  \widehat\sigma(\xi)
  =|\xi|^{-1/2}\bigl\{e^{i|\xi|}c_+(|\xi|)
                      +e^{-i|\xi|}c_-(|\xi|)\bigr\},
  \label{eq:bessel-expansion}
\end{equation}
where
\[
  |\partial_r^k c_\pm(r)|\lesssim_k r^{-k}
  \qquad (r\geq 1).
\]
Indeed, let $\mathcal H^{(1)}$ and $\mathcal H^{(2)}$ be the Hankel
functions of order zero, and take
$c_+(r)=\frac12 r^{1/2}e^{-ir}\mathcal H^{(1)}(r)$ and
$c_-(r)=\frac12 r^{1/2}e^{ir}\mathcal H^{(2)}(r)$. The equality in \eqref{eq:bessel-expansion}
is the identity expressing the Bessel function of order zero as half the sum
of the two Hankel functions~\cite[(10.4.4)]{DLMF}. By the large-argument expansions~\cite[Section 10.17]{DLMF}, $c_\pm(z)$ is holomorphic and uniformly bounded on
the discs $|z-r|\leq r/2$ for large positive $r$, which lie in the sector
$|\arg z|\le\pi/6$.
Cauchy's estimate gives the displayed derivative bounds, and smoothness
covers the remaining compact range of $r$.

Choose once and for all
$\widetilde\psi\in C_c^\infty((1/4,4))$, real-valued and equal to one
on $\supp\psi$. Thus $\widetilde\psi(|\xi|/\lambda)$ equals one on the
Fourier support of $f_\lambda$, with fixed compact annular support in the
normalized radial variable. For $\lambda\geq4$, so that
$\Phi_{t,s}(\xi)\geq1$ on the support of
$\widetilde\psi(|\xi|/\lambda)$, define
\begin{equation}
  \mathcal A^\pm_\lambda g(x,t,s)
  =\frac{\lambda^{-1/2}}{(2\pi)^2}
    \int e^{i(x\cdot\xi\pm\Phi_{t,s}(\xi))}
      m^\pm_\lambda(t,s,\xi)\widehat g(\xi)\,d\xi,
  \label{eq:branch-operator}
\end{equation}
where
\begin{equation}
  m^\pm_\lambda(t,s,\xi)
  =\widetilde\psi(|\xi|/\lambda)
    \left(\frac{\lambda}{\Phi_{t,s}(\xi)}\right)^{1/2}
    c_\pm(\Phi_{t,s}(\xi)).
  \label{eq:branch-symbol}
\end{equation}
Then
\begin{equation}
  \mathcal A_{t,s}f_\lambda
  =\mathcal A^+_\lambda f_\lambda(\cdot,t,s)
   +\mathcal A^-_\lambda f_\lambda(\cdot,t,s).
  \label{eq:branch-decomposition}
\end{equation}
Set
\[
  \mathcal M^+_\lambda g
  =\sup_{(t,s)\in\mathbb I^2}|\mathcal A^+_\lambda g|.
\]
The signs in the superscripts label the phase branches; the centered
superscript $0$ is again omitted.
Since $c_-=\overline{c_+}$ on $(0,\infty)$ and $\Phi_{t,s}$ is even,
$\overline{\mathcal A^-_\lambda g}
 =\mathcal A^+_\lambda\overline g$; hence it suffices to prove the bounds
for the positive sign, as in~\cite[Section 2]{LLOelliptic}. For $\lambda=2$, the kernels of
$\mathcal A_{t,s}P_\lambda$ have a common integrable majorant, so this
term is bounded trivially.

\subsection{Angular localization and the main estimates}\label{sec:angular-localization}
We now introduce the angular decomposition and state the estimates for
separated intervals and for intervals of width $\lambda^{-1/2}$. The
square-function estimate at the end of this subsection will control the
input norms when these estimates are combined. For an interval $I$ of
directions, define
\begin{equation}
  f_I=Q_I f_\lambda,
  \label{eq:angular-projection}
\end{equation}
where
\[
  \widehat{Q_I g}(\xi)=\mathbf1_I(\arg\xi)\widehat g(\xi).
\]
For Schwartz $f$, the Fourier transform of $f_I$ is compactly supported
and integrable, although $f_I$ need not be Schwartz. Hence its branch
integrals are well defined and continuous in $(t,s)$. The sharp projections
make the decomposition exact: if $I$ is partitioned into consecutive
intervals $I^{(1)},\ldots,I^{(K)}$, then $f_I=\sum_{\ell=1}^K f_{I^{(\ell)}}$ and
\begin{equation}
  \mathcal A^+_\lambda f_I
  =\sum_{\ell=1}^K\mathcal A^+_\lambda f_{I^{(\ell)}}.
  \label{eq:angular-additivity}
\end{equation}
When estimating one term, we insert a smooth angular cutoff into the
operator. For an interval $I$, let $\chi_I$ be a smooth $2\pi$-periodic
function of the angle with $\chi_I=1$ on $I$,
\[
  \supp\chi_I\subset\{\theta:\operatorname{dist}(\theta,I)\leq|I|/10\},
\]
and $\|\partial_\theta^k\chi_I\|_\infty\lesssim_k |I|^{-k}$ for $k\geq0$, uniformly in $I$. Let $\mathcal A^+_{\lambda,I}$ be defined by \eqref{eq:branch-operator}
with $m^+_\lambda(t,s,\xi)$ replaced by
$m^+_\lambda(t,s,\xi)\chi_I(\arg\xi)$. Since $\chi_I=1$ on $I$, we have
$\mathcal A^+_{\lambda,I}f_I=\mathcal A^+_\lambda f_I$, so the
identity \eqref{eq:angular-additivity} is unchanged.

By coordinate reflections and interchange of $t,s$, it suffices to consider
$f$ whose Fourier transform is supported in the first octant
$\{\xi:0\leq\arg\xi\leq\pi/4\}$. Decompose $(0,\pi/4]$ into
\[
  I_b=[b,2b],
\]
where $b=2^{-m}\pi/8$, $m\geq0$.
For intervals $I$ contained in $I_b$, the supports of $\chi_I$ remain at
distance comparable to $b$ from the axes. For intervals $I,I'\subset I_b$, define
the bilinear maximal operator
\begin{equation}
  \mathcal B_{\lambda,I,I'}(g_1,g_2)(x)
  =\sup_{(t,s)\in\mathbb I^2}
    |\mathcal A^+_{\lambda,I}g_1(x,t,s)
      \mathcal A^+_{\lambda,I'}g_2(x,t,s)|.
  \label{eq:bilinear-maximal}
\end{equation}
The same pair $(t,s)$ is used in both factors.

\begin{proposition}[Separated angular supports]\label{prop:separated}
Let $\lambda^{-1/2}\lesssim\delta\lesssim b\lesssim1$.
Suppose $I,I'\subset I_b$ satisfy
\begin{equation}
  |I|\sim|I'|\sim\operatorname{dist}(I,I')\sim\delta,
  \label{eq:angular-separation}
\end{equation}
and that the supports of $\chi_I$ and $\chi_{I'}$ are at distance
comparable to $\delta$. Then
\begin{equation}
  \|\mathcal B_{\lambda,I,I'}(g_1,g_2)\|_{7/2}
  \lesssim\|g_1\|_{7/2}\|g_2\|_{7/2},
  \label{eq:separated-maximal}
\end{equation}
with a constant independent of $\lambda,b,\delta$.
\end{proposition}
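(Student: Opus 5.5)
The plan is to deduce \eqref{eq:separated-maximal} from the spacetime bilinear estimate of Proposition~\ref{prop:spacetime} combined with a parameter Sobolev embedding in $(t,s)$, following the linear scheme used for Lemma~\ref{lem:diagonal}.

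\textbf{Step 1: reduce the supremum to spacetime norms.} Put $F(x,t,s)=\mathcal A^+_{\lambda,I}g_1\,\mathcal A^+_{\lambda,I'}g_2$, after inserting a smooth cutoff in $(t,s)$ supported near $\I^2$. Apply the parameter Sobolev inequality of Lemma~\ref{lem:parameter-sobolev} in both variables with exponent $r=7/2$. The cost scales $L_1,L_2$ should be taken as the true frequencies of $F$ in $t$ and in $s$. Since $\partial_t\Phi_{t,s}=t\xi_1^2/\Phi_{t,s}\sim\lambda$ and $\partial_s\Phi_{t,s}=s\xi_2^2/\Phi_{t,s}\sim\lambda b^2$ on sectors at angle $\sim b$, I expect $L_1\sim\lambda$ and $L_2\sim\lambda b^2$. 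The $s$-frequency could be smaller still, of order the variation of $\xi_2^2$ across the sector, but I would not rely on that. The result is
\[
\|\mathcal B_{\lambda,I,I'}(g_1,g_2)\|_{7/2}\lesssim (L_1L_2)^{2/7}\sum_{\alpha,\beta\in\{0,1\}}\|(L_1^{-1}\partial_t)^\alpha(L_2^{-1}\partial_s)^\beta F\|_{L^{7/2}(\R^2\times\I^2)}.
\]
By the product rule and the symbol bounds for $m^+_\lambda\chi_I$, each normalized derivative of $F$ is again a product of two branch operators of the same type. These new operators have amplitudes obeying the same estimates, uniformly in $\lambda,b,\delta$, because $|\partial^k_\theta\chi_I|\lesssim\delta^{-k}$ and the $\theta$-derivatives only enter through the kernel analysis.

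\textbf{Step 2: apply the bilinear estimate.} Each product is bounded in $L^{7/2}(\R^2\times\I^2)$ by Proposition~\ref{prop:spacetime}, applied at the exponents $7/2$ with respect to $\|g_1\|_{7/2}\|g_2\|_{7/2}$. I expect this to give a bound of the form $\lambda^{-1}\,(\text{gain})\,b^{-1/2}\delta^{-2}$ times a power of $\lambda\delta^2$. Here $\lambda^{-1}$ comes from the two factors $\lambda^{-1/2}$ in \eqref{eq:branch-operator}, and $b^{-1/2}\delta^{-2}$ is the Jacobian cost of the jet coordinates. To interpolate, I would pair this $L^2$-type bilinear gain with the trivial $L^\infty\times L^\infty$ and $L^1$ kernel bounds for the angular pieces. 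The kernel of $\mathcal A^+_{\lambda,I}$ is concentrated on a $\lambda^{-1}\times(\lambda\delta)^{-1}$-type plate family, with $L^1$ norm $\lesssim\lambda^{-1/2}(\lambda\delta^2)^{1/2}$.

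\textbf{Step 3: bookkeeping.} Insert the Sobolev factor $(\lambda^2b^2)^{2/7}$ and collect the exponents. The claim to verify is that the powers of $\lambda$ cancel exactly at $7/2$ (this is what singles out the vertex $(2/7,1/7)$), and that the powers of $b$ and $\delta$ are nonpositive in the range $\lambda^{-1/2}\lesssim\delta\lesssim b$. Then the constant is uniform.

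\textbf{Main obstacle.} I expect the difficulty to be entirely in this $b$-bookkeeping. The $\partial_s$ cost $(\lambda b^2)^{2/7}$ must compensate the degeneracy factor $b^{-1/2}$ from Proposition~\ref{prop:spacetime}. If it falls short, I would refine the choice of $L_2$ by subtracting the linear phase $s\,\partial_s\Phi_{t,s}(\xi_0)$ at the sector center, which is a modulation harmless for the maximal function. After this subtraction, the effective $s$-frequency is $\lambda b\delta$, and the parameter Sobolev bound is applied to the modulated product.
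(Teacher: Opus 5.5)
Your architecture (the $L^2$ spacetime bound of Proposition~\ref{prop:spacetime}, the kernel $L^1$ bound $O(\delta)$ per factor at $L^\infty$, the parameter Sobolev inequality, and interpolation) matches the paper's. Interpolating the bilinear spacetime bounds before applying Lemma~\ref{lem:parameter-sobolev} at $r=7/2$ is a legitimate variant. It avoids linearizing the supremum, but it also needs the kernel bounds of Lemma~\ref{lem:kernel} for the differentiated amplitudes.

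\textbf{The gap is the choice of parameter scales.} With $(L_1,L_2)=(\lambda,\lambda b^2)$ in the coordinates $(t,s)$, the $L^2$ level gives $\lambda b\cdot\lambda^{-1}b^{-1/2}\delta^{-2}=b^{1/2}\delta^{-2}$. Interpolating with $\delta^2$ yields $b^{1/r}\delta^{2-8/r}$, which is $(b/\delta)^{2/7}$ at $r=7/2$. This is unbounded as $\delta/b\to0$; for $b\sim1$ and $\delta\sim\lambda^{-1/2}$ it equals $\lambda^{1/7}$. That is precisely the range the iteration of Section~\ref{sec:frequency-proof} uses. Your Step~3 criterion is also misdirected. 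With the correct scales the powers of $\lambda$ and $b$ cancel at every exponent, and it is the surviving power $\delta^{2-7/r}$ whose vanishing selects $r=7/2$.

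\textbf{The repair you sketch does not work.} By homogeneity, $\partial_s\Phi_{t,s}(\lambda re(\theta_0))=\lambda r\,s\sin^2\theta_0/H_{t,s}(\theta_0)$. For the product the coefficient is $r_1+r_2$, which ranges over a fixed interval. So in the variable dual to $s$, the spectrum of the product is spread over an interval of length $\sim\lambda b^2$. Multiplying by a unimodular function of $(x,t,s)$ translates this set but does not shrink it. A $(t,s)$-dependent shift of $x$ would not help either, since it does not preserve the supremum at fixed $x$.

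\textbf{The missing idea is an oblique change of parameter coordinates}, differentiating along level curves of $H_{t,s}(\theta_0)$. The paper takes $(\mu,\nu)=\bigl(H_{t,s}(\theta_0),(s^2-t^2)/H_{t,s}(\theta_0)\bigr)$. In these coordinates $\Phi_{t,s}(\lambda re(\theta_0+\delta u))=\lambda r(\mu^2+\mu\nu D(u))^{1/2}$, with $D(u)=\sin^2(\theta_0+\delta u)-\sin^2\theta_0=O(b\delta)$. Hence $\partial_\nu\Phi=O(\lambda b\delta)$ uniformly in $r$, with bounded normalized derivatives. Since only $(t,s)$ are changed, the supremum at each $x$ is untouched. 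With $L_1L_2=\lambda^2b\delta$ the $L^2$ factor becomes $\lambda(b\delta)^{1/2}\cdot\lambda^{-1}b^{-1/2}\delta^{-2}=\delta^{-3/2}$. This balances the $L^\infty$ gain $\delta^2$ exactly at $r=7/2$.
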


\begin{proposition}[Short angular supports]\label{prop:short}
Fix $C_0\geq1$. If $I\subset I_b$ and
$\lambda^{-1/2}\leq|I|\leq C_0\lambda^{-1/2}$, then
\begin{equation}
  \Big\|\sup_{(t,s)\in\mathbb I^2}|\mathcal A^+_{\lambda,I}g|
  \Big\|_7
  \lesssim_{C_0}\lambda^{-1/14}b^{1/7}\|g\|_{7/2}.
  \label{eq:short-cap-maximal}
\end{equation}
If instead $I\subset[0,C_0\lambda^{-1/2}]$, then
\begin{equation}
  \Big\|\sup_{(t,s)\in\mathbb I^2}|\mathcal A^+_{\lambda,I}g|
  \Big\|_7
  \lesssim_{C_0}\lambda^{-1/7}\|g\|_{7/2}.
  \label{eq:axis-cap-maximal}
\end{equation}
\end{proposition}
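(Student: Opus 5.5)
We plan to prove both estimates of Proposition~\ref{prop:short} by a direct kernel argument: on a cap of width comparable to $\lambda^{-1/2}$ the phase is linear up to a bounded error, so each average $\mathcal A^+_{\lambda,I}g(\cdot,t,s)$ is essentially $\lambda^{-1/2}$ times a translate of a plate-localized piece of $g$. The supremum over $(t,s)$ then becomes a supremum over a set of translates. This set can be discretized at the scale of the dual rectangle, and the estimate follows from counting those rectangles together with Bernstein's inequality.

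Fix $\theta_0\in I$ and write $\xi=\rho e(\theta)$. Since $\Phi_{t,s}$ is homogeneous of degree one, it is exactly linear in $\rho$. Put $x_{t,s}=\nabla\Phi_{t,s}(e(\theta_0))=(t^2\cos\theta_0,s^2\sin\theta_0)/|e(\theta_0)_{t,s}|$. Then
\[
  \Phi_{t,s}(\xi)-x_{t,s}\cdot\xi=O\bigl(\lambda|\theta-\theta_0|^2\bigr)=O(C_0^2)
\]
on the support of $m^+_\lambda\chi_I$. Moreover, each $\partial_\theta$ costs at most $\lambda^{1/2}$ and each $\partial_\rho$ at most $\lambda^{-1}$, uniformly in $(t,s)\in\I^2$ and in $b$. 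Hence
\[
  a_{t,s}(\xi)=e^{i(\Phi_{t,s}(\xi)-x_{t,s}\cdot\xi)}m^+_\lambda(t,s,\xi)\chi_I(\arg\xi)
\]
is a symbol adapted to the plate $\Pi=\{|\xi|\sim\lambda,\ \arg\xi\in\supp\chi_I\}$, of dimensions $\lambda\times\lambda^{1/2}$. Its kernel is therefore majorized, uniformly in $(t,s)$, by $w_R(y)=|R|^{-1}(1+\|y\|_R)^{-N}$, where $R$ is the dual rectangle of dimensions $\lambda^{-1}\times\lambda^{-1/2}$ whose long side is tangential. Let $\widetilde P$ be a smooth multiplier equal to one on $\Pi$ and adapted to a slightly larger plate, and put $\tilde g=\widetilde Pg$. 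Then
\[
  |\mathcal A^+_{\lambda,I}g(x,t,s)|\lesssim\lambda^{-1/2}\,\bigl(w_R*|\tilde g|\bigr)(x+x_{t,s}).
\]

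Next we discretize. Let $E=\{x_{t,s}:(t,s)\in\I^2\}$. Since $w_R*|\tilde g|$ is essentially constant on translates of $R$ (replace $w_R$ by a weight $W_R$ with smaller $N$), we obtain
\[
  \sup_{(t,s)}|\mathcal A^+_{\lambda,I}g(x,t,s)|\lesssim\lambda^{-1/2}\Bigl(\sum_{k}|W_R*|\tilde g|(x+z_k)|^7\Bigr)^{1/7}.
\]
Here $\{z_k\}$ is a maximal $R$-separated subset of $E+R$, and its cardinality is $N\lesssim|E+R|/|R|$. Taking $L^7$ norms gives
\[
  \Bigl\|\sup_{(t,s)}|\mathcal A^+_{\lambda,I}g|\Bigr\|_7\lesssim\lambda^{-1/2}N^{1/7}\|\tilde g\|_7 .
\]
Bernstein's inequality for the plate, via Young's inequality with the kernel of $\widetilde P$ in $L^{7/6}$, gives
\[
  \|\tilde g\|_7\lesssim|\Pi|\,|R|^{6/7}\|g\|_{7/2}=\lambda^{3/2}\lambda^{-9/7}\|g\|_{7/2}=\lambda^{3/14}\|g\|_{7/2}.
\]

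It remains to count $N$, which is where the dependence on $b$ enters and where care is needed. The first coordinate of $x_{t,s}$ varies over an interval of length about $1$ as $t$ varies. The second coordinate is comparable to $s^2\sin\theta_0$ and varies by about $\sin\theta_0$ as $s$ varies. So $E$ lies in a $1\times O(\max(b,\lambda^{-1/2}))$ box. Because $\theta_0\lesssim b\lesssim 1$ is small, the sides of this box are nearly aligned with the short and long sides of $R$, respectively. Hence $N\lesssim(b+\lambda^{-1/2})\lambda^{3/2}$.

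For $I\subset I_b$ with $b\gtrsim\lambda^{-1/2}$ this gives $N\lesssim b\lambda^{3/2}$, and the total bound is
\[
  \lambda^{-1/2}\bigl(b\lambda^{3/2}\bigr)^{1/7}\lambda^{3/14}=\lambda^{-1/14}b^{1/7},
\]
which is \eqref{eq:short-cap-maximal}. When $b\lesssim\lambda^{-1/2}$, $I_b$ cannot contain an interval of length $\lambda^{-1/2}$ unless $b\sim\lambda^{-1/2}$, and then this case also falls under the second estimate. For $I\subset[0,C_0\lambda^{-1/2}]$ we have $N\lesssim\lambda$, which gives
\[
  \lambda^{-1/2}\lambda^{1/7}\lambda^{3/14}=\lambda^{-1/7},
\]
that is, \eqref{eq:axis-cap-maximal}.

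I expect the main obstacle to be the uniformity of the symbol estimates for $a_{t,s}$, and hence of the kernel majorant, in $(t,s)$ and $b$, together with the precise covering count for $E+R$. The counting requires checking that the orientation of $R$, which is fixed by $\theta_0$, is compatible with the $1\times b$ shape of $E$ uniformly near the axis. If the alignment turns out to be only approximate, I would first try computing the area of $E+R$ directly in the rotated coordinates of $R$.
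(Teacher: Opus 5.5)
Your argument is correct, and it takes a genuinely different route from the paper. The paper passes to the adapted coordinates $(\mu,\nu)=(H_{t,s}(\theta_0),(s^2-t^2)/H_{t,s}(\theta_0))$. It checks that the four normalized parameter derivatives, at scales $\lambda$ and $\lambda bh$ off the axis and $\lambda$ and $1$ on the axis, have kernels of $L^1$ norm $O(\lambda^{-1/2})$ (Lemma~\ref{lem:kernel}\eqref{lem:kernel-small}--\eqref{lem:kernel-derivative}). It then applies the parameter Sobolev inequality at $L^6$, obtaining $\lambda^{-1/4}b^{1/6}$ (resp.\ $\lambda^{-1/3}$), and interpolates with the $L^1\to L^\infty$ kernel bound $O(\lambda)$ after linearizing the supremum.

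You instead use the fact that on a single cap the parameters act only by translation, up to a symbol uniformly adapted to one plate. The supremum then becomes a maximum over $O(|E+R|/|R|)$ translates of the fixed function $\lambda^{-1/2}w_R*|\tilde g|$. Bernstein's inequality replaces both the $L^1\to L^\infty$ endpoint and the interpolation.

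The two counts are in fact the same. Since $x_{t,s}=H_{t,s}(\theta_0)e(\theta_0)+H'_{t,s}(\theta_0)e'(\theta_0)$ and $H'_{t,s}(\theta_0)=\tfrac12\nu\sin2\theta_0$, in the frame of $R$ the set $E$ is $\{(\mu,\tfrac12\nu\sin2\theta_0)\}$. This set lies in a box of dimensions $O(1)\times O(\sin2\theta_0)$ with sides exactly parallel to those of $R$, which settles the alignment worry you raise. Hence $|E+R|/|R|\lesssim\lambda\max\{1,\lambda^{1/2}\sin2\theta_0\}$, which is the paper's $L_1L_2$ at $h\sim\lambda^{-1/2}$.

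What each approach buys:
\begin{itemize}
\item Your version is more elementary. It needs no differentiated amplitudes, no parameter coordinates, and no linearization.
\item The paper's formulation is the one that combines with the spacetime $L^2$ bound of Proposition~\ref{prop:spacetime} for separated pairs. There, the product of two caps at a common parameter is not a translate of a single function, and counting translates would forfeit the bilinear gain. So one framework serves both propositions.
\end{itemize}

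Two small points need fixing. First, the axis assertion places no lower bound on $|I|$. If $|I|\ll\lambda^{-1/2}$, the factor $\chi_I$ in $a_{t,s}$ has angular derivatives of size $|I|^{-1}$, so $a_{t,s}$ is not adapted to the $\lambda\times\lambda^{1/2}$ plate. Either remedy works:
\begin{itemize}
\item Move $\chi_I$ into the projection defining $\tilde g$. Bernstein's inequality for the thinner plate only improves the bound.
\item Use the dual rectangle of the thinner plate. Its tangential side $(\lambda|I|)^{-1}$ exceeds the $\lambda^{-1/2}$ tangential width of $E$, so still $N\lesssim\lambda$ and the bound $\lambda^{-1/7}$ persists.
\end{itemize}
Second, in the off-axis case $|I|\le|I_b|=b$ already forces $b\ge\lambda^{-1/2}$. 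So $N\lesssim b\lambda^{3/2}$ directly, and your separate discussion of $b\lesssim\lambda^{-1/2}$ is unnecessary.
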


Proposition~\ref{prop:separated} follows from the spacetime estimate of Proposition~\ref{prop:spacetime},
the parameter Sobolev inequality (Lemma~\ref{lem:parameter-sobolev}), and the kernel
bounds of Lemma~\ref{lem:kernel}. Proposition~\ref{prop:short} uses only the last two. Both are
proved in Section~\ref{sec:maximal-estimates}.

Finally, we record the square-function estimate used to sum the inputs.

\begin{lemma}[C\'ordoba]\label{lem:cordoba}
If $\mathcal I$ is a subcollection of an equiangular partition of the circle
into $N$ sectors, then, for some fixed $\gamma>0$ and every $2\le r\le4$,
\begin{equation}
  \Big\|\Bigl(\sum_{I\in\mathcal I}|Q_I g|^2\Bigr)^{1/2}\Big\|_r
  \lesssim_r (\log(2+N))^\gamma\|g\|_r.
  \label{eq:cordoba-square-function}
\end{equation}
The estimate is uniform under rotations of the partition.
\end{lemma}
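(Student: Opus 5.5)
The plan is to reduce \eqref{eq:cordoba-square-function} to the classical C\'ordoba square-function estimate for equiangular sectors, applied with two separate inputs. The first input is Rubio de Francia's $L^r$ bound for $r\ge2$ for sharp projections to arbitrary disjoint intervals. The second is C\'ordoba's $L^4$ estimate for sectors, which loses a power of $\log N$. Since $\mathcal I$ is a subcollection, the operator $g\mapsto(\sum_{I\in\mathcal I}|Q_Ig|^2)^{1/2}$ is pointwise dominated by the same expression over the full partition. It therefore suffices to treat the full partition into $N$ sectors. Rotation invariance is immediate, because the operator norms are unchanged if $g$ is replaced by $g\circ\rho$ for a rotation $\rho$.

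At $r=2$, Plancherel gives $\|(\sum|Q_Ig|^2)^{1/2}\|_2=\|g\|_2$, because the sectors are disjoint. At $r=4$, I would invoke C\'ordoba's sectorial square-function theorem, $\|(\sum_I|Q_Ig|^2)^{1/2}\|_4\lesssim(\log(2+N))^{\gamma}\|g\|_4$. Its proof reduces, through the Kakeya-type maximal function over rectangles of eccentricity $N$, to C\'ordoba's $L^2$ bound $\lesssim\log N$ for the Nikodym/Kakeya maximal operator. A vector-valued duality then handles the sharp angular cutoffs. I would treat the sharp cutoff $\mathbf 1_I(\arg\xi)$ by writing it as a product of two half-plane multipliers. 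Each half-plane projection is a directional Hilbert transform. Their vector-valued boundedness on $L^4$ is controlled through the same Kakeya maximal bound, which is the source of the logarithmic power. This is the standard route, and I would cite it rather than reprove it.

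To cover intermediate $2<r<4$, I would interpolate. The map $g\mapsto(Q_Ig)_{I}$ is linear into $L^r(\ell^2)$, so complex interpolation between the $r=2$ and $r=4$ bounds gives the claim with exponent $\gamma\theta\le\gamma$. Here $\theta$ is defined by $1/r=(1-\theta)/2+\theta/4$. Because $\log(2+N)\ge1$, the bound $\lesssim(\log(2+N))^{\gamma}$ holds uniformly for $r\in[2,4]$, with implicit constants depending continuously on $r$.

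The main obstacle is the $L^4$ endpoint with sharp cutoffs. Smooth sector cutoffs are handled directly by C\'ordoba's argument. Sharp cutoffs, however, need the boundary half-plane projections to be controlled uniformly in the $N$ directions. If the vector-valued directional Hilbert transform bound were unavailable, I would fall back on a different combination. This would use Rubio de Francia's inequality along each circle together with a smooth-versus-sharp comparison. The difference $Q_I-\widetilde Q_I$, with $\widetilde Q_I$ the smooth cutoff, is supported near the two boundary rays of each sector. It can then be reabsorbed into a square function over the $2N$ boundary sectors, at the cost of one more logarithm.
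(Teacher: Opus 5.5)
Your proposal is correct and follows the paper's proof. It cites C\'ordoba's $L^4$ sectorial square-function theorem for the full partition with sharp cutoffs, interpolates with the Plancherel $L^2$ bound to cover $2\le r\le4$, and handles subcollections by pointwise domination and rotated partitions by rotation invariance of the norms. The Rubio de Francia input and the smooth-versus-sharp fallback are not needed, since C\'ordoba's theorem already treats the sharp cutoffs $\mathbf 1_I(\arg\xi)$.
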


\begin{proof}
For the full partition this is~\cite[Theorem 1(ii)]{Cordoba} at exponent $4$; as
noted in~\cite[p.~217]{Cordoba}, interpolation with the trivial $L^2$ bound gives
the same estimate for $2\leq r\leq4$.
Passing to a subcollection decreases the square function pointwise, and
rotations preserve the norms. We use this estimate only for input Fourier
projections.
\end{proof}

\subsection{A pointwise inequality}\label{sec:pointwise-inequality}
The following inequality gives the recursive step: it bounds an angular
sum by its largest piece and by products of nonadjacent pieces. It is a
two-term variant of~\cite[Lemma 2.8]{HL}.

\begin{lemma}\label{lem:brd-nrw}
Let $K\geq1$ and let $z_1,\ldots,z_K\in\mathbb C$. Then, with the
convention that a maximum over the empty set is zero,
\begin{equation}
  \Big|\sum_{\ell=1}^K z_\ell\Big|
  \leq3\max_\ell|z_\ell|
     +K\max_{|\ell-\ell'|\geq2}|z_\ell z_{\ell'}|^{1/2}.
  \label{eq:broad-narrow}
\end{equation}
\end{lemma}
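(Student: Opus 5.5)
The plan is a direct threshold argument with no iteration: split the indices according to whether $|z_\ell|$ exceeds the bilinear quantity. Set
\[
 M=\max_\ell|z_\ell|,\qquad B=\max_{|\ell-\ell'|\ge2}|z_\ell z_{\ell'}|^{1/2},
\]
with $B=0$ when $K\le2$, by the empty-set convention. Let
\[
 L=\{\ell:|z_\ell|>B\}.
\]

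The key observation is that any two distinct indices of $L$ must be adjacent. Indeed, if $\ell,\ell'\in L$ and $|\ell-\ell'|\ge2$, then
\[
 |z_\ell z_{\ell'}|^{1/2}>B,
\]
which contradicts the definition of $B$. Three distinct integers cannot be pairwise at distance one, so $L$ has at most two elements. The case $B=0$ is included: then $L$ consists of the indices with $z_\ell\ne0$, and the same reasoning applies.

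It remains to estimate the sum piece by piece:
\[
 \Big|\sum_\ell z_\ell\Big|\le\sum_{\ell\in L}|z_\ell|+\sum_{\ell\notin L}|z_\ell|\le 2M+KB.
\]
The first sum has at most two terms, each bounded by $M$. The second has at most $K$ terms, each bounded by $B$. Since $2M\le 3M$, this gives \eqref{eq:broad-narrow}.

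No step here is a real obstacle. The only point needing care is the degenerate situation $K\le2$ or $B=0$, and it is handled by the empty-maximum convention as noted above.
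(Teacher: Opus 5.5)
Your argument is correct, and it takes a slightly different route from the paper's.

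The paper anchors everything at a maximizing index $\ell_0$. The at most three terms with $|\ell-\ell_0|\le1$ are bounded together by $3|z_{\ell_0}|$. Every other term satisfies $|z_\ell|\le|z_\ell z_{\ell_0}|^{1/2}$, because $|z_\ell|\le|z_{\ell_0}|$, and since $|\ell-\ell_0|\ge2$ this quantity is at most $B$.

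You instead threshold at $B$ itself. You observe that the set $L=\{\ell:|z_\ell|>B\}$ contains no pair at distance $\ge2$, so it has at most two elements, necessarily adjacent. This yields the sharper bound $2M+KB$. Your handling of $K\le2$, where $B=0$ by the empty-maximum convention, is also correct.

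The paper's version needs no structural statement about the large terms; it is a one-line comparison with the maximizer. Yours gives the better constant. In the iteration \eqref{eq:iteration} that would replace $3^L$ by $2^L$, but the gain is immaterial there, since either factor is absorbed into $\lambda^{\varepsilon}$ by the choice of $K$.
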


\begin{proof}
Choose $\ell_0$ so that $|z_{\ell_0}|$ is maximal. The terms with
$|\ell-\ell_0|\leq1$ have sum bounded in modulus by $3|z_{\ell_0}|$.
For every other index, $|z_\ell|\leq|z_\ell z_{\ell_0}|^{1/2}$.
Adding these estimates proves \eqref{eq:broad-narrow}.
\end{proof}

When the lemma is applied to the $K$ consecutive subintervals of an
interval, the condition $|\ell-\ell'|\geq2$ means that the two
subintervals are separated by at least one subinterval length. This is the
separation condition in the definition of $\mathcal P_j$ in Section~\ref{sec:frequency-proof}.

\subsection{Proof of Theorem~\ref{thm:frequency}}\label{sec:frequency-proof}
We combine Propositions~\ref{prop:separated} and~\ref{prop:short} by iterating Lemma~\ref{lem:brd-nrw} on each
angular shell. Lemma~\ref{lem:cordoba} will sum the input norms at every stage. Fix
$\varepsilon>0$ and choose an integer $K\geq4$ so large that
\[
  \frac{\log3}{2\log K}<\frac{\varepsilon}{4}.
\]
This choice remains fixed as $\lambda$ and the angular scales vary.

\subsubsection*{The angular shells}
Recall that $\widehat f$ is supported in the first octant. Since the
finitely many bounded values of $\lambda$ are covered by the trivial
estimate, we may assume $\lambda^{-1/2}\leq\pi/8$. Put
\[
  b_m=2^{-m}\frac\pi8\quad(m\geq0),
\]
and choose the unique integer $M=M(\lambda)\geq0$ such that
\begin{equation}
  b_M\geq\lambda^{-1/2}>b_{M+1}.
  \label{eq:terminal-shell-scale}
\end{equation}
Define
\begin{equation}
  I_{b_m}=[b_m,2b_m]\quad(0\leq m\leq M),
  \label{eq:angular-shells}
\end{equation}
and $I_{\mathrm{ax}}=[0,b_M]$.
Thus $|I_{\mathrm{ax}}|\sim\lambda^{-1/2}$ and, up to endpoints,
\[
  [0,\pi/4]=I_{\mathrm{ax}}\cup\bigcup_{m=0}^M I_{b_m}.
\]
With the notation $f_I=Q_I f_\lambda$ of \eqref{eq:angular-projection}, the angular decomposition
is exact:
\begin{equation}
  f_\lambda=f_{I_{\mathrm{ax}}}+\sum_{m=0}^M f_{I_{b_m}}.
  \label{eq:shell-decomposition}
\end{equation}
The axis term will be estimated directly by \eqref{eq:axis-cap-maximal}.

\subsubsection*{Multiscale decomposition of a shell}
Fix $0\leq m\leq M$ and write $b=b_m$. We first prove the shell estimate
\begin{equation}
  \|\mathcal M^+_\lambda f_{I_b}\|_7
  \lesssim_K 3^L(1+\log\lambda)^\gamma\|f_{I_b}\|_{7/2},
  \label{eq:shell-bound}
\end{equation}
where $L$ is the number of subdivision steps defined below. We obtain it
by subdividing $I_b$ repeatedly and applying Lemma~\ref{lem:brd-nrw} at each scale.

For $I\subset I_b$, the projections in \eqref{eq:angular-projection} satisfy
$f_I=Q_I f_{I_b}=Q_I f_\lambda$. Subdivide $I_b$ repeatedly into $K$
equal pieces, writing
\begin{equation}
  \mathcal I_j=\{[b+\ell h_j,b+(\ell+1)h_j):0\leq\ell<K^j\},
  \label{eq:shell-partition}
\end{equation}
where $h_j=bK^{-j}$.
Let $L=L(\lambda,b)$ be the largest integer such that
$h_L\geq\lambda^{-1/2}$. Then
\begin{equation}
  \lambda^{-1/2}\leq h_L<K\lambda^{-1/2},
  \label{eq:terminal-scale}
\end{equation}
and hence $K^L=b/h_L\leq b\lambda^{1/2}\leq\lambda^{1/2}$.
At each scale $1\leq j\leq L$, let $\mathcal P_j$ be the set of pairs
$(I,I')\in\mathcal I_j^2$ lying in the same interval of
$\mathcal I_{j-1}$ and satisfying $\operatorname{dist}(I,I')\geq h_j$.
Thus each eligible pair consists of two nonadjacent subintervals of one
interval of length $h_{j-1}$.

Set
\begin{equation}\label{eq:iteration-maxima}
\begin{aligned}
  F_j(x)&=\max_{I\in\mathcal I_j}\mathcal M^+_\lambda f_I(x),\\
  G_j(x)&=\max_{(I,I')\in\mathcal P_j}\sup_{(t,s)\in\mathbb I^2}
    |\mathcal A^+_\lambda f_I(x,t,s)
      \mathcal A^+_\lambda f_{I'}(x,t,s)|^{1/2}.
\end{aligned}
\end{equation}
Since $\mathcal I_0=\{I_b\}$, we have
$F_0=\mathcal M^+_\lambda f_{I_b}$. Apply Lemma~\ref{lem:brd-nrw} at a fixed
$(x,t,s)$ to the exact sum \eqref{eq:angular-additivity} over the $K$ subintervals of each
interval of $\mathcal I_{j-1}$; take the common parameter supremum
afterwards. This gives
\[
  F_{j-1}\leq3F_j+KG_j,\qquad1\leq j\leq L.
\]
Iterating in $j$, we obtain
\begin{equation}
  F_0\leq3^L F_L+K\sum_{j=1}^L3^{j-1}G_j.
  \label{eq:iteration}
\end{equation}
Thus the recursion leaves the terminal intervals at scale $h_L$ and,
from each preceding scale, one contribution of separated pairs.

\subsubsection*{The separated pairs}
Fix $1\leq j\leq L$ and $(I,I')\in\mathcal P_j$, and let
$\delta=\operatorname{dist}(I,I')$. Since $I$ and $I'$ are nonadjacent
subintervals of one interval of $\mathcal I_{j-1}$, we have
$h_j\leq\delta\leq Kh_j$ and
$\lambda^{-1/2}\leq h_j\leq\delta\leq h_{j-1}\leq b$. The supports
of $\chi_I$ and $\chi_{I'}$ contain $I$ and $I'$ and lie within distance
$h_j/10$ of them. Hence their lengths lie between $h_j$ and $6h_j/5$,
their mutual distance is at least $\delta-h_j/5\geq4\delta/5$, and they
lie at distance comparable to $b$ from the axes. Thus the hypotheses of
Proposition~\ref{prop:separated} hold, with constants depending only on $K$.

Since $\mathcal A^+_{\lambda,I}f_I=\mathcal A^+_\lambda f_I$ (Section~\ref{sec:angular-localization}), bounding the maximum by the sum and applying Proposition~\ref{prop:separated}, we obtain
\begin{equation}
  \|G_j\|_7^7
  \leq\sum_{(I,I')\in\mathcal P_j}
    \|\mathcal B_{\lambda,I,I'}(f_I,f_{I'})\|_{7/2}^{7/2}
  \lesssim_K\Bigl(\sum_{I\in\mathcal I_j}\|f_I\|_{7/2}^{7/2}\Bigr)^2.
  \label{eq:separated-pair-sum}
\end{equation}
For every $p\geq2$,
\begin{equation}
  \Bigl(\sum_I\|f_I\|_p^p\Bigr)^{1/p}
  \leq\Big\|\Bigl(\sum_I|f_I|^2\Bigr)^{1/2}\Big\|_p.
  \label{eq:input-sum}
\end{equation}
Taking the seventh root in \eqref{eq:separated-pair-sum} gives the exponent $2/7$, so \eqref{eq:input-sum}
gives
\[
  \|G_j\|_7
  \lesssim_K\Bigl(\sum_{I\in\mathcal I_j}\|f_I\|_{7/2}^{7/2}\Bigr)^{2/7}
  \leq\Big\|\Bigl(\sum_{I\in\mathcal I_j}|f_I|^2\Bigr)^{1/2}
       \Big\|_{7/2}.
\]
Each partition \eqref{eq:shell-partition} embeds in a complete equiangular partition of the
circle: $2\pi/h_j=16\cdot2^m K^j$ is an integer. Moreover
$h_j\geq\lambda^{-1/2}$, so this partition has $O(\lambda^{1/2})$
intervals, and Lemma~\ref{lem:cordoba} applies with $N\lesssim\lambda^{1/2}$. Since
every $f_I=Q_I f_{I_b}$, it yields
\begin{equation}
  \|G_j\|_7\lesssim_K(1+\log\lambda)^\gamma\|f_{I_b}\|_{7/2}.
  \label{eq:separated-pair-bound}
\end{equation}
At every scale the square-function estimate is applied to the same input
$f_{I_b}$, so its constants are not multiplied along the recursion.

\subsubsection*{The terminal intervals}
For $F_L$, the same identity, Proposition~\ref{prop:short}, and $h_L<K\lambda^{-1/2}$ give
\[
  \|F_L\|_7
  \lesssim_K\Bigl(\sum_{I\in\mathcal I_L}\|f_I\|_{7/2}^{7}\Bigr)^{1/7}
  \leq\Bigl(\sum_{I\in\mathcal I_L}\|f_I\|_{7/2}^{7/2}\Bigr)^{2/7}.
\]
The first inequality uses $\lambda^{-1/14}b^{1/7}\leq1$. Using \eqref{eq:input-sum}
with $p=7/2$ and Lemma~\ref{lem:cordoba} on $f_{I_b}$, as above, we obtain
\begin{equation}
  \|F_L\|_7\lesssim_K(1+\log\lambda)^\gamma\|f_{I_b}\|_{7/2}.
  \label{eq:terminal-bound}
\end{equation}
If $L=0$, the whole shell is already a terminal interval and the same
estimate applies directly. Combining \eqref{eq:iteration}, \eqref{eq:separated-pair-bound}, and \eqref{eq:terminal-bound}, and using
$\sum_{j=1}^L3^{j-1}=(3^L-1)/2$, we obtain \eqref{eq:shell-bound}.

\subsubsection*{Summing the angular pieces}
By the bound $K^L\le\lambda^{1/2}$ following \eqref{eq:terminal-scale} and our choice of $K$, we have
$3^L=(K^L)^{\log3/\log K}\leq\lambda^{\log3/(2\log K)}
<\lambda^{\varepsilon/4}$. Absorb $(1+\log\lambda)^\gamma$ into a
further $\lambda^{\varepsilon/4}$. Since $K$ depends only on
$\varepsilon$, \eqref{eq:shell-bound} shows that each shell in \eqref{eq:angular-shells} satisfies
\begin{equation}
  \|\mathcal M^+_\lambda f_{I_b}\|_7
  \lesssim_\varepsilon\lambda^{\varepsilon/2}\|f_{I_b}\|_{7/2},
  \qquad b=b_m,\ 0\leq m\leq M.
  \label{eq:shell-epsilon-bound}
\end{equation}
For the remaining interval $I_{\mathrm{ax}}$, \eqref{eq:terminal-shell-scale} gives
$|I_{\mathrm{ax}}|=b_M<2\lambda^{-1/2}$, so Proposition~\ref{prop:short} with
$C_0=2$ yields
\begin{equation}
  \|\mathcal M^+_\lambda f_{I_{\mathrm{ax}}}\|_7
  \lesssim\lambda^{-1/7}\|f_{I_{\mathrm{ax}}}\|_{7/2}.
  \label{eq:axis-bound}
\end{equation}
Moreover, a sector of aperture less than $\pi$ is the intersection
of two half-planes. Its sharp Fourier projection is therefore the
composition of two half-plane Fourier projections, each uniformly
bounded on $L^p$, $1<p<\infty$. Hence
\begin{equation}
  \|f_{I_{\mathrm{ax}}}\|_{7/2}
  +\sup_{0\leq m\leq M}\|f_{I_{b_m}}\|_{7/2}
  \lesssim\|f_\lambda\|_{7/2}.
  \label{eq:shell-projection-bound}
\end{equation}
Using the exact decomposition \eqref{eq:shell-decomposition}, sublinearity, \eqref{eq:shell-epsilon-bound}, and \eqref{eq:axis-bound},
we get
\begin{align*}
  \|\mathcal M^+_\lambda f_\lambda\|_7
  &\leq\|\mathcal M^+_\lambda f_{I_{\mathrm{ax}}}\|_7
       +\sum_{m=0}^M\|\mathcal M^+_\lambda f_{I_{b_m}}\|_7\\
  &\lesssim_\varepsilon
    \bigl(\lambda^{-1/7}+(M+1)\lambda^{\varepsilon/2}\bigr)
    \|f_\lambda\|_{7/2}.
\end{align*}
Since $M+1\lesssim1+\log\lambda$, we have
\begin{equation}
  \|\mathcal M^+_\lambda f_\lambda\|_7
  \lesssim_\varepsilon\lambda^\varepsilon\|f_\lambda\|_{7/2}.
  \label{eq:positive-branch-bound}
\end{equation}
By \eqref{eq:branch-decomposition} and the identity
$\overline{\mathcal A^-_\lambda g}=\mathcal A^+_\lambda\overline g$,
we have $\mathcal M_\lambda f\leq\mathcal M^+_\lambda f_\lambda
+\mathcal M^+_\lambda\overline{f_\lambda}$. Apply \eqref{eq:positive-branch-bound}, after
reflections, to the octant pieces of $f_\lambda$ and
$\overline{f_\lambda}$. The corresponding projections are uniformly
bounded on $L^{7/2}$ by the argument for \eqref{eq:shell-projection-bound}; together with
$\|f_\lambda\|_{7/2}\lesssim\|f\|_{7/2}$, this gives \eqref{eq:frequency-bound}. This proves
Theorem~\ref{thm:frequency}.\qed

Section~\ref{sec:bilinear-spacetime} proves the bilinear estimate of
Proposition~\ref{prop:spacetime}, and Section~\ref{sec:maximal-estimates} deduces
Propositions~\ref{prop:separated} and~\ref{prop:short}.

\section{A bilinear \texorpdfstring{$L^2$}{L2} spacetime estimate}\label{sec:bilinear-spacetime}

The spacetime bound proved in this section supplies the $L^2$ input for the separated-support estimates in both the centered and translated cases. It is applied in Proposition~\ref{prop:separated} and, after parameter differentiation and a shear in the spatial variable, in Section~\ref{sec:translated-sufficiency}.

The local coordinates used below are related to the reductions in~\cite[Section 4 and Appendix A]{LLOelliptic}. The analytic input here is the uniform H\"ormander estimate proved below, and the bilinear estimate is proved independently of the local smoothing theorem of~\cite{LLOelliptic}.

Recall that $I_b=[b,2b]\subset(0,\pi/4]$, so that $b\le\pi/8$. Throughout this section we fix two angular intervals contained in $[9b/10,21b/10]\subset(0,21\pi/80]$, each of length $O(\delta)$ and at mutual distance comparable to $\delta$, where $0<\delta\lesssim b$. In the application to Proposition~\ref{prop:separated}, these intervals contain the supports of the cutoffs $\chi_I$ and $\chi_{I'}$ of Section~\ref{sec:angular-localization}; since $\operatorname{supp}\chi_I$ lies within $|I|/10$ of $I\subset I_b$ and $|I|\le b$, these supports lie in $[9b/10,21b/10]$. Let $\theta_0\sim b$ be an angle between them. With $e(\theta)=(\cos\theta,\sin\theta)$ as in Section~\ref{sec:introduction}, put $e'(\theta)=(-\sin\theta,\cos\theta)$. The letters $\xi,\eta\in\R^2$ denote the two independent input frequencies. Since each belongs to an annulus of radius $\lambda$, set
\begin{equation}
(\xi,\eta)=\bigl(\lambda r_1e(\theta_0+\delta u),\lambda r_2e(\theta_0+\delta v)\bigr),
\label{eq:normalized-frequencies}
\end{equation}
where $\zeta=(r_1,u,r_2,v)$. Here $r_1,r_2$ are normalized radial frequencies and $u,v$ are normalized angles. On the normalized input support, $\zeta$ lies in the set
\begin{equation}
\{\zeta:\ c\le r_1,r_2\le C,\ |u|+|v|\le C,\ c\le |u-v|\le C\}.
\label{eq:normalized-support}
\end{equation}
The constants in this display are fixed independently of $\lambda,b,\delta$.

\subsubsection*{The bilinear operator.}
Let $m(t,s,\xi,\eta)$ be smooth and supported in a fixed compact subset of $(0,\infty)^2$ in $(t,s)$. On its support, $\arg\xi$ and $\arg\eta$ are required to lie in the first and second angular intervals fixed above, respectively. We also assume that its expression in the normalized input coordinates \eqref{eq:normalized-frequencies},
\begin{equation}
\widetilde m(t,s,\zeta)=m(t,s,\lambda r_1e(\theta_0+\delta u),\lambda r_2e(\theta_0+\delta v)),
\label{eq:normalized-symbol}
\end{equation}
is supported in the set \eqref{eq:normalized-support} and satisfies
\begin{equation}
\sup_{t,s,\zeta}\bigl|\partial_\zeta^\alpha\widetilde m(t,s,\zeta)\bigr|\le C_\alpha,\qquad |\alpha|\le6,
\label{eq:symbol-bounds}
\end{equation}
with constants independent of $\lambda,b,\delta$. Recall $\Phi_{t,s}$ from Section~\ref{sec:fourier-representation}. For such $m$, define
\begin{align}
\mathcal TF(x,t,s)&=\iint e^{i\Psi(x,t,s,\xi,\eta)}m(t,s,\xi,\eta)F(\xi,\eta)\,d\xi\,d\eta,
\notag\\
\Psi(x,t,s,\xi,\eta)&=x\cdot(\xi+\eta)+\Phi_{t,s}(\xi)+\Phi_{t,s}(\eta).
\label{eq:bilinear-phase}
\end{align}
The scales $\lambda,b,\delta$ enter only through the support of $m$ and through \eqref{eq:normalized-symbol}.

\begin{proposition}\label{prop:spacetime}
Let $m$ satisfy the assumptions above. Then
\begin{equation}
\|\mathcal TF\|_{L^2(\R^2\times(0,\infty)^2)}\lesssim b^{-1/2}\delta^{-2}\|F\|_{L^2(\R^4)}.
\label{eq:spacetime-bound}
\end{equation}
The constant depends only on the fixed supports, the constants in \eqref{eq:normalized-support}, the implicit constants in $\delta\lesssim b$ and $\theta_0\sim b$, and the constants $C_\alpha$, $|\alpha|\le6$, in \eqref{eq:symbol-bounds}.
\end{proposition}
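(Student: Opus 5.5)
This is a Hörmander-type $L^2$ oscillatory integral estimate from the four input variables $(\xi,\eta)\in\R^4$ to the four output variables $(x,t,s)$. I will prove it by a $TT^*$ argument after rescaling to normalized coordinates.

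\textbf{Rescaling.} First I pass to the coordinates $\zeta=(r_1,u,r_2,v)$ of \eqref{eq:normalized-frequencies}. The Jacobian of $(\xi,\eta)\mapsto\zeta$ is $\lambda^4\delta^2 r_1r_2$. Hence, writing $F(\xi,\eta)=\widetilde F(\zeta)$, we have $\|F\|_{L^2}\sim\lambda^{-2}\delta^{-1}\|\widetilde F\|_{L^2}$, and $\mathcal TF=\lambda^4\delta^2\int e^{i\widetilde\Psi}\widetilde m\widetilde F\,d\zeta$. The phase is $\widetilde\Psi=\lambda\Psi_1(x,t,s,\zeta)$, where
\[
\Psi_1=r_1\bigl(x\cdot e(\theta_0+\delta u)+|(t\cos(\theta_0+\delta u),s\sin(\theta_0+\delta u))|\bigr)+(\text{same with }r_2,v).
\]
The claim then reduces to the $L^2(d\zeta)\to L^2(dx\,dt\,ds)$ bound
\[
\Bigl\|\int e^{i\lambda\Psi_1}\widetilde m\,\widetilde F\,d\zeta\Bigr\|_{L^2}\lesssim \lambda^{-2}b^{-1/2}\delta^{-3}\|\widetilde F\|_2 .
\]
Hörmander's theorem in four variables gives $\lambda^{-2}|\det\partial^2_{(x,t,s),\zeta}\Psi_1|^{-1/2}$, provided the mixed Hessian determinant is $\gtrsim b\delta^6$ uniformly. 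This matches the stated factor: the unnormalized determinant $b\delta^4$ times $\delta^2$ coming from the $u,v$ derivatives.

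\textbf{Computing the mixed Hessian.} The $x$-gradient of $\Psi_1$ is $r_1e(\theta_0+\delta u)+r_2e(\theta_0+\delta v)$. Its $\zeta$-derivatives span a $2\times4$ block with entries $e(\cdot)$ and $\delta r_ie'(\cdot)$. The $(t,s)$-derivatives of $\Psi_1$ are $r_1 t\cos^2\!\alpha_1/\rho_1+r_2t\cos^2\!\alpha_2/\rho_2$ and the analogous sine expressions, where $\alpha_1=\theta_0+\delta u$ and $\rho_i=|(t\cos\alpha_i,s\sin\alpha_i)|$. These terms are homogeneous of degree one in $r_i$, so Euler's relation lets me eliminate the radial columns. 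After elimination, the determinant reduces to a $2\times2$ determinant of $\partial_u,\partial_v$ derivatives of the functions $g_t(\alpha)=t\cos^2\alpha/\rho(\alpha)$ and $g_s(\alpha)=s\sin^2\alpha/\rho(\alpha)$, corrected by projection onto the tangential direction. Expanding in $\delta$ by Taylor's formula around $\theta_0$, the first-order terms cancel against the translation part; this cancellation is the usual curvature degeneracy. What remains is a Wronskian-type determinant of second and third angular derivatives, times $\delta\cdot\delta^2\cdot(u-v)^k$. The factor $b$ appears because the third derivative of the phase in the angle (the derivative of the radius of curvature) is $\sim\sin\theta_0\cos\theta_0\sim b$ near the axis.

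\textbf{Main obstacle: uniformity.} I expect the crux to be getting the lower bound $|\det|\gtrsim b\delta^6|u-v|^4$ uniformly, while also controlling error terms of relative size $\delta/b$ and $\delta$. Since $\delta\lesssim b$ only up to a constant, the Taylor remainders are not automatically small. Following the hint in the introduction, I will avoid a crude determinant expansion. Instead I will change output variables linearly (and $\lambda,b,\delta$-dependently) to the coefficients of an exact third-order expansion of the phase in the angle, $x\cdot e(\theta)+\rho(\theta)=\sum_{k\le3}a_k(x,t,s)(\theta-\theta_0)^k+R$, with $a_k$ rescaled by $\delta^k$ (and $a_3$ by $b\delta^3$). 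The map $(x,t,s)\mapsto(a_0,\dots,a_3)$ has Jacobian $\sim b\delta^{6}$, up to the cost of normalizing $t,s$ to a compact set.

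In the new variables the phase is $r_1P(u)+r_2P(v)$ plus remainders, where $P$ is a generic cubic. Its mixed Hessian is a Vandermonde-type determinant $\sim|u-v|^4r_1r_2$, bounded below by \eqref{eq:normalized-support}. The remainder is $O(\delta^4/(b\delta^3))$-small only if it is written exactly as an integral remainder whose derivatives are uniformly bounded. I will handle this using the symmetry $\rho'(0)=\rho'''(0)=0$, which gives the fourth-order coefficient a size $\lesssim\delta^4$ compared with $b\delta^3$; in the worst case I will absorb it by shrinking the constants.

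With a nondegenerate phase having $C^6$-bounded derivatives in the new variables, the standard $TT^*$ integration by parts (this uses \eqref{eq:symbol-bounds} up to order 6) gives the bound $\lambda^{-2}$. Undoing the output change of variables costs $(b\delta^6)^{-1/2}$, which yields \eqref{eq:spacetime-bound}.
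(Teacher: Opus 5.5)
Your architecture is the paper's: normalize the input to $\zeta$, take as output coordinates the rescaled $3$-jet of the angular phase with the $\rho'$-direction divided by $b$, and finish with a $TT^*$ argument uniform in the size of the output domain. Your scale count $\lambda^{-2}(b\delta^6)^{-1/2}$ is also right. (The norm relation you wrote is inverted: it should read $\|F\|_2\sim\lambda^2\delta\|\widetilde F\|_2$, which is what your reduced claim actually uses.)

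The gap is the lower bound for the normalized mixed Hessian. You take it from the confluent Vandermonde determinant $\sim r_1r_2|u-v|^4$ of the cubic model and treat the fourth-order remainder as a perturbation. After the $\rho'$-coordinate is normalized by $b\delta^3$, however, the remainder contributes to that column a term of size $\delta^4/(b\delta^3)=\delta/b$. This is bounded but \emph{not small}.

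\begin{itemize}
\item Its $\varrho_2$-derivative involves $\partial_{\varrho_2}\rho''_{t,s}(\theta_0)$, which is of size one, not $O(b)$. The symmetry $\rho'(0)=\rho'''(0)=0$ does nothing to it.
\item To leading order, the last component of the normalized curve is $w^3/6+\frac{\delta}{12B_0}w^4$. Its third derivative is $1+2\delta w/B_0\approx(\theta_0+\delta w)/\theta_0$, which varies by a factor of order one across the support when $\delta\sim b$. The cubic model is simply not accurate there.
\item ``Shrinking the constants'' amounts to assuming $\delta\le\epsilon_0 b$. The proposition does not assume this, and it fails in the application: the first-generation pairs in the angular iteration of Section~\ref{sec:frequency-proof} have $\delta\in[b/K,b]$.
\item When $b$ is of size one, even the $O(\delta^2)$ remainder in the $\varrho_1$-direction and the non-polynomial trigonometric terms are not small.
\end{itemize}

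What is missing is an exact, nonperturbative nonvanishing statement. The paper computes the determinant in the original variables, \eqref{eq:model-determinant}:
$|\det\partial_{(\xi,\eta)}\nabla_{x,t,s}\Psi|=4\sin^4\bigl(\tfrac{\vartheta_1-\vartheta_2}{2}\bigr)|\sin(\vartheta_1+\vartheta_2)|\sim b\delta^4$, with $\vartheta_1=\arg\xi_{t,s}$ and $\vartheta_2=\arg\eta_{t,s}$. This holds throughout the quadrant with no condition on $\delta/b$.

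The paper then transfers this bound through the exact input and output Jacobians, $\lambda^4\delta^2r_1r_2$ and $\lambda^4B_0\delta^6J$, to obtain $|\det M|\sim1$. The normalized coordinates are used only for the \emph{upper} bounds $\|M\|_{C^m}\lesssim1$, for which $\delta/b\lesssim1$ suffices. Finally, $\sigma_{\min}(M)\ge|\det M|/\|M\|^3$ gives the uniform invertibility needed in the H\"ormander step.

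Any repair of your route needs an exact nonvanishing input of this kind; smallness of the remainder is not available.
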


Taking $F(\xi,\eta)=\widehat f(\xi)\widehat g(\eta)$ and using Plancherel gives the bilinear estimate used in Sections~\ref{sec:maximal-estimates} and \ref{sec:translated-sufficiency}; no smoothness of $F$ is needed.

\subsubsection*{Plan of the proof.}
The proof consists of three changes of variables followed by an application of a quantitative form of H\"ormander's $L^2$ estimate, Lemma~\ref{lem:uniform-hormander}. In Section~\ref{sec:input-jet-coordinates} the input frequencies are normalized to the fixed set \eqref{eq:normalized-support}, with Jacobian comparable to $\lambda^4\delta^2$. For the output we use the coefficients of an exact third-order expansion of the angular phase at $\theta_0$: the coordinates $z=(z_0,z_1)$ and the parameter coordinates $\varrho=(\varrho_1,\varrho_2)$. In Section~\ref{sec:phase-normalization} these coordinates are rescaled at the respective scales $\lambda,\lambda\delta,\lambda\delta^2,\lambda b\delta^3$, the sizes of the four terms at angular scale $\delta$. The output Jacobian is then comparable to $\lambda^4b\delta^6$, so the $L^2$ change of measure contributes
\[
\left(\frac{\lambda^4\delta^2}{\lambda^4b\delta^6}\right)^{1/2}=b^{-1/2}\delta^{-2},
\]
and it remains to bound the normalized operator uniformly. In Section~\ref{sec:mixed-hessian} its mixed Hessian is shown to be uniformly bounded and invertible: the original determinant vanishes like $b\delta^4$, and the rescaling cancels exactly this factor. In Section~\ref{sec:spacetime-proof} a fixed finite partition puts the normalized operator in the form of Lemma~\ref{lem:uniform-hormander}.

\subsection{Normalized input and jet coordinates}\label{sec:input-jet-coordinates}
We introduce the normalized input variables and the output coordinates
determined by the value and first three angular derivatives of the phase
at $\theta_0$ (its 3-jet). The corresponding norm relations are collected
in \eqref{eq:norm-relations}.

Fix a compact rectangle in $(0,\infty)^2$ containing the $(t,s)$ support in its interior. All constants in the proof may depend on this rectangle, the fixed normalized frequency ranges and separation constants, and the seminorms in \eqref{eq:symbol-bounds}, but not on $\lambda,b,\delta$ or the choice of $\theta_0$. In particular, the constants implicit in $\delta\lesssim b$ and $\theta_0\sim b$ are fixed. We will check that the coordinate neighborhoods, amplitude bounds, and finite partitions used below share this independence of the scales.

Restrict $F$ to the frequency support of the amplitude.

\subsubsection*{Normalization of the input.}
Polar coordinates in each frequency plane give
\begin{equation}
d\xi\,d\eta=\lambda^4\delta^2r_1r_2\,d\zeta.
\label{eq:input-jacobian}
\end{equation}
Indeed $d\xi=|\xi|\,d|\xi|\,d\arg\xi=\lambda^2\delta r_1\,dr_1\,du$, and similarly for $\eta$. It is convenient to absorb the square root of this Jacobian into the input. With $(\xi,\eta)$ as in \eqref{eq:normalized-frequencies}, define
\begin{equation}
F^\sharp(\zeta)=\lambda^2\delta\sqrt{r_1r_2}F(\xi,\eta),
\label{eq:normalized-input}
\end{equation}
so that $\|F^\sharp\|_{L^2_\zeta}=\|F\|_{L^2_{\xi,\eta}}$ by \eqref{eq:input-jacobian}.
Write the angular part of the phase as
\begin{equation}
H_{t,s}(\theta)=\Phi_{t,s}(e(\theta))=(t^2\cos^2\theta+s^2\sin^2\theta)^{1/2},
\label{eq:angular-phase}
\end{equation}
and define
\[
\mathcal T_1G(x,t,s)=\int e^{i\Psi_1(x,t,s,\zeta)}\sqrt{r_1r_2}\widetilde m(t,s,\zeta)G(\zeta)\,d\zeta,
\]
where
\begin{equation}
\begin{split}
\Psi_1(x,t,s,\zeta)&=\lambda r_1\{x\cdot e(\theta_0+\delta u)+H_{t,s}(\theta_0+\delta u)\}\\
&\qquad+\lambda r_2\{x\cdot e(\theta_0+\delta v)+H_{t,s}(\theta_0+\delta v)\}.
\end{split}
\label{eq:normalized-input-phase}
\end{equation}
Since $\Phi_{t,s}$ is homogeneous of degree one, $\Phi_{t,s}(\lambda re(\theta))=\lambda rH_{t,s}(\theta)$; hence the change of variables \eqref{eq:normalized-frequencies} and the definition \eqref{eq:normalized-input} give the exact identity
\begin{equation}
\mathcal TF=\lambda^2\delta\mathcal T_1F^\sharp.
\label{eq:input-norm-relation}
\end{equation}
The normalized input now lies in a fixed compact set. The remaining problem is to choose output coordinates in which the phase is uniformly nondegenerate.

\subsubsection*{The parameter coordinates.}
The combination of angular derivatives that controls the output coordinates is the radius of curvature
\begin{equation}
\rho_{t,s}(\theta):=H_{t,s}(\theta)+H''_{t,s}(\theta)=\frac{t^2s^2}{H_{t,s}(\theta)^3}.
\label{eq:curvature-radius}
\end{equation}
Here primes are angular derivatives.
On a slightly enlarged fixed parameter rectangle,
\[
c\le t,s,H_{t,s}(\theta)\le C,
\]
and all fixed-order derivatives of $H_{t,s}$ and $\rho_{t,s}$ in $(t,s,\theta)$ are uniformly bounded. Indeed, $H_{t,s}(\theta)\ge\min(t,s)$, so the denominators in the differentiated formulas stay uniformly away from zero.

Consider the function
\[
P_{x,t,s}(\theta)=x\cdot e(\theta_0+\theta)+H_{t,s}(\theta_0+\theta).
\]
Its 3-jet at $\theta=0$ determines the output coordinates. Let $H_0=H_{t,s}(\theta_0)$ be the value of \eqref{eq:angular-phase} at $\theta_0$, put
\begin{equation}
B_0=\sin2\theta_0\sim b,
\label{eq:reference-angle}
\end{equation}
and define
\begin{equation}
\varrho=(\varrho_1,\varrho_2)=\Bigl(\rho_{t,s}(\theta_0),\frac{\rho'_{t,s}(\theta_0)}{B_0}\Bigr).
\label{eq:curvature-coordinates}
\end{equation}
Thus $\varrho_1$ is the radius of curvature at $\theta_0$ and $\varrho_2$ is its normalized angular derivative; both depend only on $(t,s)$ and $\theta_0$. Set
\begin{equation}
(z_0,z_1)=\bigl(x\cdot e(\theta_0)+H_0,\ x\cdot e'(\theta_0)+H'_{t,s}(\theta_0)\bigr).
\label{eq:spatial-coordinates}
\end{equation}
Then
\[
(P_{x,t,s},P'_{x,t,s},P''_{x,t,s},P'''_{x,t,s})(0)=(z_0,\,z_1,\,\varrho_1-z_0,\,B_0\varrho_2-z_1).
\]
Thus $z_0,z_1$ record the value and first derivative of the angular phase, whereas $\varrho_1,B_0\varrho_2$ isolate its next two derivatives after those lower-order terms have been removed. The factor $B_0$ reflects the vanishing of $\rho'_{t,s}$ on the axes, which follows from reflection symmetry.

Direct differentiation gives
\begin{equation}
(\varrho_1,\varrho_2)=\Bigl(\frac{t^2s^2}{H_0^3},\frac{3t^2s^2(t^2-s^2)}{2H_0^5}\Bigr),
\label{eq:curvature-jacobian}
\end{equation}
whose Jacobian is computed in \eqref{eq:parameter-jacobian} below. The first component is \eqref{eq:curvature-radius} at $\theta_0$. For the second, differentiating \eqref{eq:angular-phase} gives $H'_{t,s}(\theta)=(s^2-t^2)\sin2\theta/(2H_{t,s}(\theta))$, so
\[
\rho'_{t,s}(\theta_0)=-\frac{3t^2s^2H'_{t,s}(\theta_0)}{H_0^4}
=\frac{3t^2s^2(t^2-s^2)}{2H_0^5}B_0;
\]
the factor $B_0$ cancels in the definition \eqref{eq:curvature-coordinates} of $\varrho_2$.

For the determinant, write $\varrho_2=\frac32\varrho_1(t^2-s^2)/H_0^2$, and recall $H_0^2=t^2\cos^2\theta_0+s^2\sin^2\theta_0$. Then
\[
\begin{aligned}
(\partial_t,\partial_s)\varrho_1&=\varrho_1\left(\frac2t-\frac{3t\cos^2\theta_0}{H_0^2},\ \frac2s-\frac{3s\sin^2\theta_0}{H_0^2}\right),\\
(\partial_t,\partial_s)\frac{t^2-s^2}{H_0^2}&=\frac{2ts}{H_0^4}\,(s,-t).
\end{aligned}
\]
In the determinant of $(\varrho_1,\varrho_2)$ the terms in which $(t^2-s^2)/H_0^2$ is not differentiated cancel, so
\[
\frac{\partial(\varrho_1,\varrho_2)}{\partial(t,s)}
=\frac32\varrho_1\Bigl(\partial_t\varrho_1\,\partial_s\frac{t^2-s^2}{H_0^2}-\partial_s\varrho_1\,\partial_t\frac{t^2-s^2}{H_0^2}\Bigr)
=-\frac{3ts\varrho_1}{H_0^4}(t\partial_t\varrho_1+s\partial_s\varrho_1).
\]
Since $\varrho_1$ is homogeneous of degree one in $(t,s)$, Euler's identity gives $t\partial_t\varrho_1+s\partial_s\varrho_1=\varrho_1$, and hence
\begin{equation}
\frac{\partial(\varrho_1,\varrho_2)}{\partial(t,s)}=-\frac{3ts\varrho_1^2}{H_0^4}=-\frac{3t^5s^5}{H_0^{10}}.
\label{eq:parameter-jacobian}
\end{equation}
In particular, the explicit formula for $\varrho_2$ contains no factor $B_0^{-1}$ and extends smoothly to $\theta_0=0$. On the fixed parameter rectangle, \eqref{eq:curvature-jacobian} and \eqref{eq:parameter-jacobian} give
\[
c\le\left|\frac{\partial(\varrho_1,\varrho_2)}{\partial(t,s)}\right|\le C,
\]
together with $c\le H_0,\varrho_1\le C$ and $|\varrho_2|\le C$.
All fixed-order derivatives of this parameter map are bounded by constants independent of $\theta_0$. Thus the division by $B_0$ in \eqref{eq:curvature-coordinates} causes no loss as $b\to0$.

The inverse map is explicit:
\begin{equation}
\begin{aligned}
H_0&=\varrho_1\left(1+\frac{2\varrho_2}{3\varrho_1}\sin^2\theta_0\right)^{-1}
\left(1-\frac{2\varrho_2}{3\varrho_1}\cos^2\theta_0\right)^{-1},\\
t&=H_0\left(1+\frac{2\varrho_2}{3\varrho_1}\sin^2\theta_0\right)^{1/2},\\
s&=H_0\left(1-\frac{2\varrho_2}{3\varrho_1}\cos^2\theta_0\right)^{1/2}.
\end{aligned}
\label{eq:inverse-curvature-coordinates}
\end{equation}
Indeed, the two expressions under the square roots equal $t^2/H_0^2$ and $s^2/H_0^2$, respectively. Since $c\le t,s,H_{t,s}\le C$, they are bounded below by a positive constant; they are also the two factors inverted in the formula for $H_0$. On a fixed bounded $\varrho$ range with $\varrho_1$ bounded away from zero, the square-root arguments have uniformly bounded first derivatives. Their positive lower bounds therefore persist on a neighborhood of the parameter image of uniform positive width. The inverse values $t,s$ stay in a fixed compact subset of $(0,\infty)^2$ there. Differentiating \eqref{eq:inverse-curvature-coordinates} gives, for each fixed order,
\[
|\partial_{\varrho_1}^j\partial_{\varrho_2}^kt|+|\partial_{\varrho_1}^j\partial_{\varrho_2}^ks|\le C_{j,k}
\]
on these neighborhoods, with constants independent of $\lambda,b,\delta$. The images lie in a fixed bounded set. They can therefore be covered by a uniformly bounded number of convex rectangles of fixed small size, with partition cutoffs having uniform derivatives. Their locations may depend on $\theta_0$; their sizes and number do not. Multiply $m$ by a partition of unity $\chi_k(\varrho(t,s))$ subordinate to these rectangles; the products satisfy the same bounds \eqref{eq:symbol-bounds}. From now on we fix one $k$ and work on the corresponding rectangle, which is convex.

\subsubsection*{The output coordinates.}
Let
\[
X(z,t,s)=(z_0-H_0)e(\theta_0)+(z_1-H'_{t,s}(\theta_0))e'(\theta_0),
\]
where $z=(z_0,z_1)$ and $H_0=H_{t,s}(\theta_0)$. This is the inverse of \eqref{eq:spatial-coordinates} for fixed $(t,s)$. The output change $(x,t,s)\mapsto(z,\varrho)$ is block triangular: $z$ depends on $x$ through a rotation of determinant one and a translation depending on $(t,s)$, while $\varrho$ depends only on $(t,s)$. Hence its absolute Jacobian equals that of $(t,s)\mapsto\varrho$ with $\theta_0$ fixed, and by \eqref{eq:parameter-jacobian},
\begin{equation}
\begin{gathered}
J(t,s)=\left|\frac{\partial(z_0,z_1,\varrho_1,\varrho_2)}{\partial(x,t,s)}\right|
=\frac{3t^5s^5}{H_0^{10}}\sim1,\\
dx\,dt\,ds=J(t,s)^{-1}\,dz\,d\varrho.
\end{gathered}
\label{eq:output-measure}
\end{equation}
With $t=t(\varrho)$ and $s=s(\varrho)$ given by \eqref{eq:inverse-curvature-coordinates}, we define
\[
\mathcal T_2G(z,\varrho)=J(t,s)^{-1/2}\int e^{i\Psi_2(z,\varrho,\zeta)}\sqrt{r_1r_2}\widetilde m(t,s,\zeta)G(\zeta)\,d\zeta,
\]
where $\Psi_2(z,\varrho,\zeta)=\Psi_1(X(z,t,s),t,s,\zeta)$. Thus
\[
\mathcal T_2G(z,\varrho)=J(t,s)^{-1/2}\mathcal T_1G(X(z,t,s),t,s).
\]
By \eqref{eq:output-measure}, the factor $J(t,s)^{-1/2}$ makes the output change unitary:
\begin{equation}
\|\mathcal T_1G\|_{L^2_{x,t,s}}=\|\mathcal T_2G\|_{L^2_{z,\varrho}}.
\label{eq:output-norm-relation}
\end{equation}
This change is used only for a spacetime norm. In Section~\ref{sec:maximal-estimates}, where a parameter supremum is taken at each fixed $x$, no such parameter-dependent spatial translation will be made.

\subsubsection*{The phase in jet coordinates.}
For a fixed $\theta$, differentiation in $w$ and the identity $H_{t,s}+H''_{t,s}=\rho_{t,s}$ in \eqref{eq:curvature-radius} give
\[
\frac{d}{dw}\{H_{t,s}(\theta_0+w)\cos(\theta-w)+H'_{t,s}(\theta_0+w)\sin(\theta-w)\}
=\rho_{t,s}(\theta_0+w)\sin(\theta-w).
\]
Integration from $0$ to $\theta$ therefore yields, with $H_0=H_{t,s}(\theta_0)$ and $B_0$ as in \eqref{eq:reference-angle},
\[
\begin{split}
H_{t,s}(\theta_0+\theta)&=H_0\cos\theta+H'_{t,s}(\theta_0)\sin\theta\\
&\qquad+\int_0^\theta\sin(\theta-w)\rho_{t,s}(\theta_0+w)\,dw.
\end{split}
\]
This is an exact identity, also for negative $\theta$ with the usual oriented integral. The addition formula $e(\theta_0+\theta)=e(\theta_0)\cos\theta+e'(\theta_0)\sin\theta$ handles the spatial term. Since
\[
\int_0^\theta(1,w)\sin(\theta-w)\,dw=(1-\cos\theta,\ \theta-\sin\theta),
\]
writing $\rho_{t,s}(\theta_0+w)=\varrho_1+B_0\varrho_2w+\{\rho_{t,s}(\theta_0+w)-\varrho_1-B_0\varrho_2w\}$ and using \eqref{eq:curvature-coordinates} and \eqref{eq:spatial-coordinates}, we obtain
\begin{equation}
P_{x,t,s}(\theta)=z_0\cos\theta+z_1\sin\theta+\varrho_1(1-\cos\theta)+B_0\varrho_2(\theta-\sin\theta)+E_\varrho(\theta),
\label{eq:exact-phase-expansion}
\end{equation}
where
\begin{equation}
E_\varrho(\theta)=\int_0^\theta\sin(\theta-w)\{\rho_{t,s}(\theta_0+w)-\varrho_1-B_0\varrho_2w\}\,dw.
\label{eq:phase-remainder}
\end{equation}
By \eqref{eq:normalized-input-phase} and the definition of $\Psi_2$,
\begin{equation}
\Psi_2(z,\varrho,\zeta)=\lambda r_1P_{x,t,s}(\delta u)+\lambda r_2P_{x,t,s}(\delta v),
\label{eq:transformed-phase}
\end{equation}
where $x=X(z,t,s)$ and $P_{x,t,s}$ is given by \eqref{eq:exact-phase-expansion}.

\subsection{Normalization of the phase}\label{sec:phase-normalization}
We next bound the remainder in the expansion \eqref{eq:exact-phase-expansion} and rescale the output variables. This leads to the normalized operator $S$ in \eqref{eq:normalized-operator}, and the norm relations \eqref{eq:norm-relations} reduce Proposition~\ref{prop:spacetime} to a uniform $L^2$ bound for $S$. The remainder is not expanded further; it stays in the phase throughout the proof.

\subsubsection*{The Taylor remainder.}
Taylor's formula is used only to bound the remainder in \eqref{eq:phase-remainder}. With $t=t(\varrho)$ and $s=s(\varrho)$ as in \eqref{eq:inverse-curvature-coordinates}, and since $\rho_{t,s}(\theta_0)=\varrho_1$ and $\rho'_{t,s}(\theta_0)=B_0\varrho_2$ by \eqref{eq:curvature-coordinates},
\[
\rho_{t,s}(\theta_0+w)-\varrho_1-B_0\varrho_2w
=w^2\int_0^1(1-\tau)\rho''_{t,s}(\theta_0+\tau w)\,d\tau.
\]
Consequently,
\begin{equation}
E_\varrho(\theta)=\theta^4\mathcal E_\varrho(\theta),
\label{eq:remainder-factorization}
\end{equation}
where an exact formula is
\[
\mathcal E_\varrho(\theta)=\int_0^1\int_0^1(1-r)r^2(1-\tau)
\frac{\sin(\theta(1-r))}{\theta(1-r)}\rho''_{t,s}(\theta_0+\tau r\theta)\,d\tau\,dr.
\]
The sine quotient extends smoothly across zero. To bound the derivatives in $\varrho$ uniformly in $b$, we express $\rho_{t,s}$ in terms of $\varrho$:
\begin{equation}
\rho_{t,s}(\theta_0+w)=\varrho_1\left(1-\frac{2\varrho_2}{3\varrho_1}\{\sin^2(\theta_0+w)-\sin^2\theta_0\}\right)^{-3/2}.
\label{eq:curvature-formula}
\end{equation}
Indeed,
\[
H_{t,s}(\theta_0+w)^2=H_0^2+(s^2-t^2)\{\sin^2(\theta_0+w)-\sin^2\theta_0\},
\]
and $(t^2-s^2)/H_0^2=2\varrho_2/(3\varrho_1)$ by \eqref{eq:curvature-jacobian}. The expression in parentheses in \eqref{eq:curvature-formula} equals $H_{t,s}(\theta_0+w)^2/H_0^2$, which is bounded above and below by fixed positive constants on the coordinate neighborhoods. This identity and the inverse-coordinate bounds give uniform bounds for all required $(\varrho,\theta)$ derivatives of $\rho''_{t,s}(\theta_0+\tau r\theta)$, for $0\le r,\tau\le1$. Differentiation under the integral for $\mathcal E$ therefore gives
\[
\bigl|\partial_\theta^j\partial_{\varrho_1}^k\partial_{\varrho_2}^{\ell}\mathcal E_\varrho(\theta)\bigr|\le C_{j,k,\ell},
\]
uniformly in $b,\delta$ on the coordinate neighborhoods and bounded $\theta$ ranges considered above. By \eqref{eq:remainder-factorization},
\[
E_\varrho(\delta u)=\delta^4u^4\mathcal E_\varrho(\delta u).
\]
Each $u$ derivative falling on $\mathcal E_\varrho(\delta u)$ contributes a factor $\delta$. Since $u$ and $\delta$ lie in fixed bounded ranges, the chain rule and the preceding derivative bounds give, for each fixed order required in the proof,
\begin{equation}
\|E_\varrho(\delta u)\|_{C^m_{u,\varrho}}\lesssim_m\delta^4.
\label{eq:remainder-bounds}
\end{equation}

\subsubsection*{Rescaled output coordinates.}
In \eqref{eq:transformed-phase}, the four main terms of \eqref{eq:exact-phase-expansion} at $\theta=\delta u$, as coefficients of $z_0,z_1,\varrho_1,\varrho_2$, have sizes $1,\delta,\delta^2,B_0\delta^3$. This determines the rescaling
\begin{equation}
(y_0,y_1,y_2,y_3)=(\lambda z_0,\ \lambda\delta z_1,\ \lambda\delta^2\varrho_1,\ \lambda B_0\delta^3\varrho_2).
\label{eq:rescaled-output}
\end{equation}
This diagonal rescaling has Jacobian $\lambda^4B_0\delta^6$, so by \eqref{eq:output-measure}
\begin{equation}
dy=\lambda^4B_0\delta^6\,dz\,d\varrho=\lambda^4B_0\delta^6J(t,s)\,dx\,dt\,ds.
\label{eq:measure-relations}
\end{equation}
Define
\begin{equation}
\gamma_\delta(u)=\left(\cos\delta u,\frac{\sin\delta u}{\delta},\frac{1-\cos\delta u}{\delta^2},\frac{\delta u-\sin\delta u}{\delta^3}\right).
\label{eq:model-curve}
\end{equation}
The limit $\gamma_\delta(u)\to(1,u,u^2/2,u^3/6)$ as $\delta\to0$ explains the four scales; the exact trigonometric functions will be retained. Let $(z,\varrho)$ be related to $y$ by \eqref{eq:rescaled-output}, and $t=t(\varrho)$, $s=s(\varrho)$. Substituting \eqref{eq:rescaled-output} into \eqref{eq:transformed-phase} gives $\Psi_2(z,\varrho,\zeta)=\widetilde\Psi(y,\zeta)$; for instance, $\lambda z_1\sin\delta u=y_1(\sin\delta u)/\delta$ and $\lambda B_0\varrho_2(\delta u-\sin\delta u)=y_3(\delta u-\sin\delta u)/\delta^3$. The fully normalized operator is therefore
\begin{equation}
SG(y)=\mathcal T_2G(z,\varrho)=\int e^{i\widetilde\Psi(y,\zeta)}m_S(y,\zeta)G(\zeta)\,d\zeta,
\label{eq:normalized-operator}
\end{equation}
where
\[
m_S(y,\zeta)=J(t,s)^{-1/2}\sqrt{r_1r_2}\widetilde m(t,s,\zeta),
\]
\begin{equation}
\begin{split}
\widetilde\Psi(y,\zeta)&=r_1y\cdot\gamma_\delta(u)+r_2y\cdot\gamma_\delta(v)\\
&\qquad+\lambda\{r_1E_\varrho(\delta u)+r_2E_\varrho(\delta v)\}.
\end{split}
\label{eq:normalized-phase}
\end{equation}
The cutoff $\chi_k(\varrho)$ is included in $\widetilde m$. At fixed $y$, the quantities $t,s$ depend only on $y_2,y_3$, not on $\zeta$. Hence \eqref{eq:symbol-bounds}, the bounds on $r_1,r_2$ in \eqref{eq:normalized-support}, and $J(t,s)\sim1$ in \eqref{eq:output-measure} give
\begin{equation}
\sup_{y,\zeta}|\partial_\zeta^\alpha m_S(y,\zeta)|\le C_\alpha\qquad (|\alpha|\le6),
\label{eq:amplitude-bounds}
\end{equation}
uniformly in $\lambda,b,\delta$. These are precisely the amplitude bounds needed for Lemma~\ref{lem:uniform-hormander}: only input derivatives enter, so rescaling the output cutoff preserves them.

By \eqref{eq:input-norm-relation} and \eqref{eq:output-norm-relation}, and since $SG(y)=\mathcal T_2G(z,\varrho)$ with $dy=\lambda^4B_0\delta^6\,dz\,d\varrho$ by \eqref{eq:measure-relations}, we have
\begin{equation}
\begin{split}
\|\mathcal TF\|_2&=\lambda^2\delta\|\mathcal T_1F^\sharp\|_2=\lambda^2\delta\|\mathcal T_2F^\sharp\|_2\\
&=\lambda^2\delta(\lambda^4B_0\delta^6)^{-1/2}\|SF^\sharp\|_2.
\end{split}
\label{eq:norm-relations}
\end{equation}
Since $\lambda^2\delta(\lambda^4B_0\delta^6)^{-1/2}=B_0^{-1/2}\delta^{-2}$, it remains to prove that $S$ is bounded on $L^2$ with a uniform constant.

\subsection{The mixed Hessian}\label{sec:mixed-hessian}
The main hypothesis of Lemma~\ref{lem:uniform-hormander} concerns the mixed Hessian of the normalized phase,
\[
M(y,\zeta):=D_y(\nabla_\zeta\widetilde\Psi(y,\zeta)),
\]
with rows indexed by $\zeta$ and columns by $y$. We first bound the entries of $M$ by differentiating the normalized phase. Its determinant is then computed in the original coordinates, where the factors $b$ and $\delta$ appear explicitly, and transferred to the normalized coordinates. Together these give \eqref{eq:hessian-bounds}.

\subsubsection*{The normalized mixed Hessian.}
We now differentiate \eqref{eq:normalized-phase}. For a normalized angular variable $w$, with $\gamma_\delta$ as in \eqref{eq:model-curve}, set
\[
\widetilde\gamma(w)=\gamma_\delta(w)+\left(0,0,
\delta^{-2}\partial_{\varrho_1}E_\varrho(\delta w),
(B_0\delta^3)^{-1}\partial_{\varrho_2}E_\varrho(\delta w)\right).
\]
Here $B_0=\sin2\theta_0$ is as in \eqref{eq:reference-angle}, and the dependence of $\widetilde\gamma$ on $\varrho$ is suppressed. In the remainder term, by \eqref{eq:rescaled-output}, the chain rule contributes the factors $(\lambda\delta^2)^{-1}$ and $(\lambda B_0\delta^3)^{-1}$ when differentiating in $y_2$ and $y_3$, respectively. These cancel the factor $\lambda$ in \eqref{eq:normalized-phase}. Thus
\begin{equation}
\nabla_y\widetilde\Psi(y,\zeta)=r_1\widetilde\gamma(u)+r_2\widetilde\gamma(v).
\label{eq:phase-gradient}
\end{equation}
In particular, $M$ depends on $y$ only through $\varrho$ and is independent of $y_0,y_1$. Since $M$ is the transpose of $D_\zeta(\nabla_y\widetilde\Psi)$, its $C^m$ norm in $(\varrho,\zeta)$ is bounded by the $C^{m+1}$ norm of \eqref{eq:phase-gradient}.

The remainder need not be small: $\lambda E_\varrho(\delta u)$ can have size $\lambda\delta^4$. Its normalized parameter derivatives are nevertheless bounded. By \eqref{eq:remainder-bounds},
\begin{equation}
\begin{aligned}
\bigl\|\delta^{-2}\partial_{\varrho_1}E_\varrho(\delta w)\bigr\|_{C^m_{w,\varrho_1,\varrho_2}}&\lesssim_m\delta^2,\\
\bigl\|(B_0\delta^3)^{-1}\partial_{\varrho_2}E_\varrho(\delta w)\bigr\|_{C^m_{w,\varrho_1,\varrho_2}}&\lesssim_m\frac{\delta}{B_0}\lesssim\frac{\delta}{b}\lesssim1.
\end{aligned}
\label{eq:normalized-remainder-bounds}
\end{equation}
The last bound is where $\delta\lesssim b$ enters the upper bounds. The components of $\gamma_\delta$ in \eqref{eq:model-curve} extend smoothly to $\delta=0$ and have uniformly bounded $w$ derivatives on the fixed angular range. Using \eqref{eq:normalized-remainder-bounds} with $m+1$ in place of $m$ and the fixed bounds on $r_1,r_2$, we obtain $\|M\|_{C^m_{\varrho_1,\varrho_2,\zeta}}\le C_m$. These are derivatives in the normalized parameter variables $\varrho$, as required for the finite partition below; no higher $y$ derivatives are used. The same upper bounds hold on fixed enlargements of the normalized input ranges: the remainder formula is smooth for bounded $u,v$, and these upper bounds do not use the separation condition $|u-v|\gtrsim1$.

\subsubsection*{The determinant in the original coordinates.}
Recall from Section~\ref{sec:fourier-representation} that $\xi_{t,s}=(t\xi_1,s\xi_2)$ and $\Phi_{t,s}(\xi)=|\xi_{t,s}|$, and set $\vartheta_1=\arg\xi_{t,s}$, $\vartheta_2=\arg\eta_{t,s}$. With
\[
\Gamma(\theta)=(\cos\theta,\sin\theta,\cos^2\theta,\sin^2\theta),
\]
a direct calculation of the four output derivatives gives
\[
\nabla_{x,t,s}\Psi=\diag(t^{-1},s^{-1},t^{-1},s^{-1})\{\Phi_{t,s}(\xi)\Gamma(\vartheta_1)+\Phi_{t,s}(\eta)\Gamma(\vartheta_2)\}.
\]
At fixed $(t,s)$, the diagonal matrix has determinant $(t^2s^2)^{-1}$, while the changes $\xi\mapsto\xi_{t,s}$ and $\eta\mapsto\eta_{t,s}$ have determinant $t^2s^2$. Write $\Phi_1=\Phi_{t,s}(\xi)$ and $\Phi_2=\Phi_{t,s}(\eta)$ for the two radial variables. In the polar coordinates $(\Phi_1,\vartheta_1)$ and $(\Phi_2,\vartheta_2)$ of $\xi_{t,s}$ and $\eta_{t,s}$, the derivative of $(\Phi_1,\vartheta_1,\Phi_2,\vartheta_2)\mapsto\Phi_1\Gamma(\vartheta_1)+\Phi_2\Gamma(\vartheta_2)$ has columns
\[
\Gamma(\vartheta_1),\quad\Phi_1\Gamma'(\vartheta_1),\quad\Gamma(\vartheta_2),\quad\Phi_2\Gamma'(\vartheta_2),
\]
and the polar Jacobian $\Phi_1\Phi_2$ cancels the same factor. Hence
\[
|\det\partial_{(\xi,\eta)}\nabla_{x,t,s}\Psi|
=|\det(\Gamma(\vartheta_1),\Gamma'(\vartheta_1),\Gamma(\vartheta_2),\Gamma'(\vartheta_2))|,
\]
and
\begin{equation}
\det(\Gamma(\vartheta_1),\Gamma'(\vartheta_1),\Gamma(\vartheta_2),\Gamma'(\vartheta_2))
=-4\sin^4\left(\frac{\vartheta_1-\vartheta_2}{2}\right)\sin(\vartheta_1+\vartheta_2).
\label{eq:model-determinant}
\end{equation}
To verify the constant and sign, put $\theta_+=(\vartheta_1+\vartheta_2)/2$ and $\theta_-=(\vartheta_1-\vartheta_2)/2$. Rotate the first two rows by $-\theta_+$, and replace the last two rows by their sum and difference; the combined row operation has determinant $-2$. Next subtract column one from column three and add column two to column four. Expansion along the third row, now $(1,0,0,0)$, reduces the transformed determinant to
\[
\begin{split}
&\det\begin{pmatrix}
-\sin\theta_-&0&0\\
\cos\theta_-&-2\sin\theta_-&2\cos\theta_-\\
-2\sin(2\theta_++2\theta_-)&2\sin(2\theta_+)\sin(2\theta_-)&-4\sin(2\theta_+)\cos(2\theta_-)
\end{pmatrix}\\
&\qquad=8\sin^4\theta_-\sin(2\theta_+).
\end{split}
\]
Division by $-2$ gives \eqref{eq:model-determinant}, displaying both the fourth-order vanishing at $\vartheta_1=\vartheta_2$ and the additional axial factor. Consequently
\begin{equation}
\bigl|\det\partial_{(\xi,\eta)}\nabla_{x,t,s}\Psi\bigr|
=4\sin^4\left(\frac{\vartheta_1-\vartheta_2}{2}\right)|\sin(\vartheta_1+\vartheta_2)|\sim b\delta^4.
\label{eq:original-hessian-determinant}
\end{equation}
To make the constants in the last comparison explicit, the angular map $\theta\mapsto\arg(t\cos\theta,s\sin\theta)$ has derivative $ts/H_{t,s}(\theta)^2$, bounded above and below by fixed positive constants. The mean value theorem and \eqref{eq:normalized-support} therefore give $|\vartheta_1-\vartheta_2|\sim\delta|u-v|\sim\delta$. Since $\theta_0+\delta u,\theta_0+\delta v\in[9b/10,21b/10]\subset(0,\pi/2)$ and this angular map preserves $(0,\pi/2)$, the transformed angles lie in $(0,\pi/2)$, so $|\sin((\vartheta_1-\vartheta_2)/2)|\sim\delta$. For the axial factor,
\[
\sin(\vartheta_1+\vartheta_2)=\frac{ts\sin(2\theta_0+\delta(u+v))}{H_{t,s}(\theta_0+\delta u)H_{t,s}(\theta_0+\delta v)}.
\]
Here $2\theta_0+\delta(u+v)\in[9b/5,21b/5]\subset(0,21\pi/40]$, so its sine is comparable to $b$. Since the other factors in this identity are bounded above and below, $|\sin(\vartheta_1+\vartheta_2)|\sim b$ with uniform constants. This proves \eqref{eq:original-hessian-determinant} without any constant deteriorating as $b$ or $\delta$ tends to zero.

\subsubsection*{The determinant after normalization.}
Since the input change depends only on $\zeta$ and the output change only on $y$, the mixed chain rule involves only their first derivatives:
\[
M=[D_\zeta(\xi,\eta)]^{\mathsf T}[D_{(\xi,\eta)}(\nabla_{x,t,s}\Psi)]^{\mathsf T}D_y(x,t,s);
\]
the transposes appear because the rows of $M$ are indexed by the input. By \eqref{eq:input-jacobian} and \eqref{eq:measure-relations}, $|\partial(\xi,\eta)/\partial\zeta|=\lambda^4\delta^2r_1r_2$ and $|\partial(x,t,s)/\partial y|=(\lambda^4B_0\delta^6J(t,s))^{-1}$. Hence, by \eqref{eq:original-hessian-determinant}, \eqref{eq:normalized-support}, \eqref{eq:output-measure}, and $B_0\sim b$ in \eqref{eq:reference-angle},
\begin{equation}
|\det M(y,\zeta)|=\frac{r_1r_2}{B_0\delta^4J(t,s)}\bigl|\det\partial_{(\xi,\eta)}\nabla_{x,t,s}\Psi\bigr|
\sim\frac{r_1r_2}{J(t,s)}\frac{b}{B_0}\sim1.
\label{eq:normalized-hessian-determinant}
\end{equation}
Thus the factor $b\delta^4$ in the original determinant is canceled by the output normalization; all powers of $\lambda$ have already canceled in the two Jacobians. Combining \eqref{eq:normalized-hessian-determinant} with the upper bounds above gives
\begin{equation}
|M\omega|\ge c|\omega|\qquad(\omega\in\R^4),
\label{eq:hessian-bounds}
\end{equation}
together with $\|M\|\le C$ and $\|M\|_{C^m_{\varrho_1,\varrho_2,\zeta}}\le C_m$ for $0\le m\le6$, with $c,C,C_m$ independent of $\lambda,b,\delta$. To see the lower bound explicitly, let $\sigma_1\ge\sigma_2\ge\sigma_3\ge\sigma_4$ be the singular values of $M$. The entry bound gives $\sigma_j\le\|M\|\le C$, while $\prod_{j=1}^4\sigma_j=|\det M|\gtrsim1$. Consequently
\[
\sigma_{\min}(M)\ge\frac{|\det M|}{\|M\|^3}\gtrsim1.
\]
These are the matrix bounds needed to apply Lemma~\ref{lem:uniform-hormander}.

\subsection{Completion of the proof of Proposition~\ref{prop:spacetime}}\label{sec:spacetime-proof}
It remains to prove the uniform bound
\begin{equation}
\|SG\|_{L^2_y}\lesssim\|G\|_{L^2_\zeta}
\label{eq:normalized-L2-bound}
\end{equation}
for the operator in \eqref{eq:normalized-operator}, uniformly in $\lambda,b,\delta$. We use the following quantitative form of H\"ormander's $L^2$ estimate~\cite[Theorem 1.1]{Hormander}. The proof below shows that the constant depends on the amplitude only through its input derivatives and is independent of the size of the output domain.

\subsubsection*{A quantitative H\"ormander estimate.}
Let $\Omega\subset\R^4$ be open and convex, let $U\subset\R^4$ be open, and fix a compact set $Z\Subset U$. Let $\varphi$ be real and smooth on $\Omega\times U$, and let $q$ be a smooth amplitude with $\supp q(y,\cdot)\subset Z$ for every $y\in\Omega$. Define
\[
T_\varphi g(y)=\int_U e^{i\varphi(y,\zeta)}q(y,\zeta)g(\zeta)\,d\zeta,
\]
and let $M=D_y(\nabla_\zeta\varphi)$, where the rows of the $4\times4$ matrix $M$ are indexed by $\zeta$.

\begin{lemma}[Uniform H\"ormander $L^2$ estimate]\label{lem:uniform-hormander}
Suppose a fixed matrix $M_0$ and $c>0$ satisfy $|M_0\omega|\ge c|\omega|$ for $\omega\in\R^4$ and, on $\Omega\times U$,
\[
\|M-M_0\|\le c/2,
\]
and
\[
\sup_{\Omega\times U}\bigl(|\partial_\zeta^\alpha q|+\|\partial_\zeta^\alpha M\|\bigr)\le A_\alpha\qquad(|\alpha|\le6).
\]
Then $\|T_\varphi g\|_{L^2(\Omega)}\le C\|g\|_{L^2(U)}$, where $C$ depends only on $Z$, $c$ and the displayed derivative bounds, not on the size of $\Omega$ or on output derivatives of $q$.
\end{lemma}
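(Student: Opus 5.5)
We prove the estimate by the standard $TT^*$ argument, keeping track of which derivatives enter the integration by parts. The point is that only input derivatives of $q$ and of $M$ may be used. Since $\|T_\varphi\|^2=\|T_\varphi^*T_\varphi\|$, it suffices to bound on $L^2(U)$ the operator with kernel
\[
K(\zeta,\zeta')=\int_\Omega e^{i(\varphi(y,\zeta)-\varphi(y,\zeta'))}q(y,\zeta)\overline{q(y,\zeta')}\,dy .
\]
Integrating in $y$ over a large domain would require output derivatives of $q$, which we want to avoid. We therefore dualize the other way and bound $T_\varphi T_\varphi^*$ on $L^2(\Omega)$. Its kernel is
\[
K(y,y')=\int_U e^{i(\varphi(y,\zeta)-\varphi(y',\zeta))}q(y,\zeta)\overline{q(y',\zeta)}\,d\zeta ,
\]
and the integration by parts now takes place in $\zeta$, over the fixed compact set $Z$. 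This uses only $\zeta$-derivatives of $q$ and of the phase.

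The key step is a lower bound for the $\zeta$-gradient of the phase difference. Since $\Omega$ is convex, the segment from $y'$ to $y$ lies in $\Omega$, so
\[
\nabla_\zeta\varphi(y,\zeta)-\nabla_\zeta\varphi(y',\zeta)=\Bigl(\int_0^1M(y'+\tau(y-y'),\zeta)\,d\tau\Bigr)(y-y').
\]
The averaged matrix lies within $c/2$ of $M_0$, hence satisfies $|\bar M\omega|\ge (c/2)|\omega|$. This gives
\[
|\nabla_\zeta(\varphi(y,\zeta)-\varphi(y',\zeta))|\ge (c/2)|y-y'|.
\]
Higher $\zeta$-derivatives of the phase difference are of the form $\partial_\zeta^\alpha\bar M\,(y-y')$. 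They are therefore bounded by $A_\alpha|y-y'|$, using only the hypothesis on $\partial_\zeta^\alpha M$.

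Now apply the operator $L=\frac{\nabla_\zeta\Phi\cdot\nabla_\zeta}{i|\nabla_\zeta\Phi|^2}$, where $\Phi=\varphi(y,\cdot)-\varphi(y',\cdot)$. Each application of the transpose $L^{\mathsf T}$ gains a factor $|y-y'|^{-1}$: the bounds above show that the coefficients of $L$ and their $\zeta$-derivatives are $O(|y-y'|^{-1})$. Applying $L$ five times (the hypothesis allows up to six $\zeta$-derivatives, and five suffice to beat dimension four) gives
\[
|K(y,y')|\lesssim |Z|\,(1+|y-y'|)^{-5}.
\]
The constant depends only on $Z$, $c$ and $A_\alpha$ for $|\alpha|\le 6$. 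No boundary terms arise, because $q(y,\cdot)$ is supported in $Z\Subset U$. Schur's test on $\Omega\subset\R^4$ then bounds $T_\varphi T_\varphi^*$, since $\int_{\R^4}(1+|w|)^{-5}\,dw<\infty$. The bound does not depend on the size of $\Omega$.

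The main obstacle is the lower bound on the phase gradient. It is global in $y-y'$, not merely local, and it is exactly here that the convexity of $\Omega$ and the uniform closeness $\|M-M_0\|\le c/2$ are used. A merely pointwise invertibility of $M$ would not give a lower bound for the averaged matrix over long segments.
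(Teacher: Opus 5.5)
Your proposal is correct and follows the paper's proof essentially step for step: $T_\varphi T_\varphi^*$ with integration by parts in $\zeta$ over $Z$, the convexity identity $\nabla_\zeta\Theta=\int_0^1 M(y'+\tau w,\zeta)\,w\,d\tau$ giving $|\nabla_\zeta\Theta|\ge (c/2)|w|$, and Schur's test in $\R^4$. The differences are only cosmetic: you integrate by parts five times rather than six, which suffices since $(1+|w|)^{-5}$ is integrable in dimension four. You also leave implicit the paper's preliminary truncation of the output to $\Omega\cap B(0,N)$, which makes the operator Hilbert--Schmidt so that the $TT^*$ manipulation is justified before letting $N\to\infty$.
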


\begin{proof}
First restrict the output to $\Omega\cap B(0,N)$, obtaining a Hilbert--Schmidt operator. For $w=y-y'$ and $\Theta(\zeta)=\varphi(y,\zeta)-\varphi(y',\zeta)$, convexity gives
\[
\nabla_\zeta\Theta=\int_0^1M(y'+\tau w,\zeta)w\,d\tau.
\]
Consequently $|\nabla_\zeta\Theta|\ge(c/2)|w|$ and $|\partial_\zeta^\alpha\Theta|\lesssim_\alpha|w|$ for $1\le|\alpha|\le7$. When $|w|\ge1$, the operator
\[
\mathcal L=\frac{\nabla_\zeta\Theta}{i|\nabla_\zeta\Theta|^2}\cdot\nabla_\zeta
\]
satisfies $\mathcal L(e^{i\Theta})=e^{i\Theta}$, and its coefficients and their $\zeta$ derivatives through order six are $O(|w|^{-1})$. Six integrations by parts in the kernel of $T_\varphi T_\varphi^*$, together with the direct bound for $|w|\le1$, yield
\[
\Big|\int_U e^{i\Theta(\zeta)}q(y,\zeta)\overline{q(y',\zeta)}\,d\zeta\Big|\lesssim(1+|y-y'|)^{-6}.
\]
There are no boundary terms because the input support lies in $Z\Subset U$. Schur's test in dimension four gives a bound independent of $N$; letting $N\to\infty$ proves the assertion.
\end{proof}

\subsubsection*{A uniform finite partition.}
Let $c$ be the constant in \eqref{eq:hessian-bounds}, and let $C_1$ bound the Lipschitz constant of $M$ in operator norm as a function of the six variables $(\varrho,\zeta)$, also on the enlarged input ranges (see the end of the paragraph following \eqref{eq:normalized-remainder-bounds}); by the bound $\|M\|_{C^1_{\varrho_1,\varrho_2,\zeta}}\le C_1$ following \eqref{eq:hessian-bounds}, $C_1$ is independent of $\lambda,b,\delta$. Around points $(\varrho,\zeta)$ of the amplitude support, take products $R\times U$ of an open convex parameter rectangle $R$ and an input box $U$ of a common fixed size, with diameter at most $c/(2C_1)$ and small enough to lie in one inverse-coordinate neighborhood. If $M_0$ is the matrix at the chosen point, then $|M_0\omega|\ge c|\omega|$ by \eqref{eq:hessian-bounds}, and the mean value theorem gives
\begin{equation}
\|M(y,\zeta)-M_0\|\le c/2.
\label{eq:hessian-freezing}
\end{equation}
The reference matrix $M_0$ may depend on the scales, but its lower bound $c$ does not.

All the $(\varrho,\zeta)$ ranges lie in a fixed bounded subset of $\R^6$. We can cover the support by boxes of half this size centered at points of the amplitude support, using a uniformly bounded number of them. Take a smooth partition of unity subordinate to these smaller boxes, and use the larger boxes as the neighborhoods $R\times U$. The cutoff derivatives in $(\varrho,\zeta)$ are uniformly bounded, and the input support of each piece lies a fixed positive distance inside $U$.

\subsubsection*{Verification of the hypotheses of Lemma~\ref{lem:uniform-hormander}.}
Under \eqref{eq:rescaled-output}, the output variables with $\varrho\in R$ form the convex set
\[
\Omega=\R^2\times\diag(\lambda\delta^2,\lambda B_0\delta^3)R,
\]
and \eqref{eq:hessian-freezing} holds throughout $\Omega\times U$. The first two output variables need no subdivision because $M$ is independent of $y_0,y_1$. Thus a small value of $\lambda\delta^2$ or $\lambda B_0\delta^3$ changes the shape of $\Omega$, but not the number of pieces or the matrix bounds. Each input support is compactly contained in its $U$ with a uniform margin. The amplitude bounds \eqref{eq:amplitude-bounds}, the fixed cutoff bounds, and the bounds $\|M\|_{C^m_{\varrho_1,\varrho_2,\zeta}}\le C_m$ following \eqref{eq:hessian-bounds} give uniform $\zeta$ derivatives of $m_S$ and $M$ through order six. These are exactly the hypotheses of Lemma~\ref{lem:uniform-hormander}. Its constant is independent of the size of $\Omega$ and of output derivatives of the rescaled cutoff. Hence it gives the same $L^2$ bound on every piece. Summing over the uniformly bounded number of pieces proves \eqref{eq:normalized-L2-bound}. The argument applies for every $0<\delta\lesssim b$, including when $\lambda B_0\delta^3$ is small; no lower bound on $\delta$ in terms of $\lambda$ is needed.

\subsubsection*{Conclusion.}
By \eqref{eq:normalized-L2-bound} and \eqref{eq:norm-relations},
\[
\|\mathcal TF\|_2\lesssim B_0^{-1/2}\delta^{-2}\|F\|_2\lesssim b^{-1/2}\delta^{-2}\|F\|_2
\]
on each parameter coordinate neighborhood, with constants depending only on the fixed data listed at the start of the proof. Summing over the uniformly bounded number of neighborhoods proves Proposition~\ref{prop:spacetime}.\qed

\begin{remark}[Sharpness of \eqref{eq:spacetime-bound}]\label{rem:spacetime-sharpness}
The factor $b^{-1/2}\delta^{-2}$ is sharp when $\lambda b\delta^3\gtrsim1$. Let $E_0$ correspond to a fixed small box in the normalized input support, and choose an admissible amplitude $m$ equal to one on $E_0$ for $(t,s)$ near $(1,1)$. Set
\[
F(\xi,\eta)=e^{-i\Psi(-e(\theta_0),1,1,\xi,\eta)}\mathbf1_{E_0}(\xi,\eta),
\]
so that $\|F\|_2^2=|E_0|\sim\lambda^4\delta^2$.
The reference output $(-e(\theta_0),1,1)$ corresponds to $y_*=(0,0,\lambda\delta^2,0)$ in \eqref{eq:rescaled-output}. By \eqref{eq:phase-gradient} and \eqref{eq:normalized-remainder-bounds}, $\nabla_y\widetilde\Psi$ is uniformly bounded. Thus, on a sufficiently small fixed cube about $y_*$, the phase remaining after modulation has absolute value at most $\pi/3$ on $E_0$. The assumption $\lambda b\delta^3\gtrsim1$, which also gives $\lambda\delta^2\gtrsim\lambda b\delta^3\gtrsim1$ since $b\delta\lesssim1$, and the uniform inverse bounds in \eqref{eq:inverse-curvature-coordinates} ensure that this cube lies in the parameter region where $m=1$.

Consequently, $|\mathcal TF|\ge|E_0|/2$ on its preimage, whose measure is comparable to $(\lambda^4b\delta^6)^{-1}$ by \eqref{eq:measure-relations}. Hence
\[
\|\mathcal TF\|_2\gtrsim\lambda^4\delta^2(\lambda^4b\delta^6)^{-1/2}\sim b^{-1/2}\delta^{-2}\|F\|_2.
\]
\end{remark}

\section{Maximal estimates for the centered operator}\label{sec:maximal-estimates}

We prove the two maximal estimates used in Section~\ref{sec:angular-decomposition}. For Proposition~\ref{prop:separated}, the spacetime estimate and the parameter Sobolev inequality give the $L^2$ endpoint, while the kernel $L^1$ bound gives the $L^\infty$ endpoint. Proposition~\ref{prop:short} uses the differentiated kernel $L^1$ bounds to obtain an $L^6$ estimate, and the kernel $L^\infty$ bound for the other endpoint. Interpolation completes both arguments. We follow the parameter Sobolev approach of~\cite[Section~2]{LLOelliptic}, keeping track of the amplitudes produced by differentiation. The adapted coordinates below exploit cancellation of the centered phase; the parameter Sobolev inequality and kernel estimates are also used in Section~\ref{sec:translated-sufficiency}.

\subsection{The parameter Sobolev inequality and adapted coordinates}\label{sec:parameter-sobolev}
We choose coordinates in which the derivative scales in the two parameters are $\lambda$ and $\max\{1,\lambda b\delta\}$. The following two-parameter form of~\cite[Lemma~2.1]{LLOelliptic} then converts derivative bounds into a bound for the parameter supremum.

\begin{lemma}\label{lem:parameter-sobolev}
Let $E_1,E_2$ be fixed compact intervals of length comparable to one, $L_1,L_2\geq 1$, and $1\leq r\leq\infty$. For smooth $G=G(t,s)$,
\begin{equation}
\sup_{(t,s)\in E_1\times E_2}|G(t,s)|
\lesssim (L_1L_2)^{1/r}\sum_{\alpha,\beta\in\{0,1\}}
L_1^{-\alpha}L_2^{-\beta}\|\partial_t^\alpha\partial_s^\beta G\|_{L^r(E_1\times E_2)}.
\label{eq:parameter-sobolev}
\end{equation}
A fixed enlargement of the rectangle may be used on the right.
\end{lemma}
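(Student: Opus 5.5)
The plan is to reduce to the one-parameter Sobolev embedding applied twice, after rescaling each parameter interval. First treat $r=\infty$: the bound is trivial there, since the right side contains the term $\alpha=\beta=0$ with $\|G\|_\infty$. For $1\le r<\infty$, begin with the one-dimensional inequality on an interval $E$ of length $\sim1$. For smooth $g$ on $E$ and $L\ge1$, cover $E$ by subintervals $J$ of length $L^{-1}$. On each $J$, the fundamental theorem of calculus gives, for $t\in J$,
\[
|g(t)|\le |J|^{-1}\int_J|g|+\int_J|g'|.
\]
By Hölder on an interval of length $L^{-1}$, this is at most $L^{1/r}\bigl(\|g\|_{L^r(J)}+L^{-1}\|g'\|_{L^r(J)}\bigr)$. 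Bounding $\|\cdot\|_{L^r(J)}$ by $\|\cdot\|_{L^r(E)}$ and taking the supremum over $J$ gives
\[
\sup_E|g|\lesssim L^{1/r}\bigl(\|g\|_{L^r(E)}+L^{-1}\|g'\|_{L^r(E)}\bigr).
\]
The implicit constant is uniform because the lengths of $E$ are comparable to one. If the last subinterval is shorter than $L^{-1}$, choose it to overlap with the previous one instead.

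Next, apply this in $t$ for fixed $s$, with $L=L_1$. This gives
\[
|G(t,s)|\lesssim L_1^{1/r}\sum_{\alpha\in\{0,1\}}L_1^{-\alpha}\|\partial_t^\alpha G(\cdot,s)\|_{L^r(E_1)}.
\]
Then view $s\mapsto\|\partial_t^\alpha G(\cdot,s)\|_{L^r(E_1)}$ as a function of $s$. Rather than differentiating a norm, apply the one-dimensional bound pointwise in $t$ to $g=\partial_t^\alpha G(t,\cdot)$ on $E_2$ with $L=L_2$. Raising to the power $r$ and integrating over $t\in E_1$ gives
\[
\sup_{s}\|\partial_t^\alpha G(\cdot,s)\|_{L^r(E_1)}\le \Bigl\|\sup_s|\partial_t^\alpha G(\cdot,s)|\Bigr\|_{L^r(E_1)}\lesssim L_2^{1/r}\sum_{\beta}L_2^{-\beta}\|\partial_t^\alpha\partial_s^\beta G\|_{L^r(E_1\times E_2)}.
\]
Combining the two steps yields \eqref{eq:parameter-sobolev} with the factor $(L_1L_2)^{1/r}$.

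The remark about a fixed enlargement is immediate: shrinking the domain of the supremum only helps, or the same argument can be run on the enlarged rectangle, since its side lengths are still comparable to one. The only point needing care, and it is minor, is that the subinterval construction must give constants independent of $L$ and of where $E$ sits. This holds because the lengths of $E$ are comparable to one and $L\ge1$, so every subinterval has length between $L^{-1}/2$ and $L^{-1}$.
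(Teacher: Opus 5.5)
Your proof is correct and follows the paper's argument: the one-dimensional bound $|g(t)|\lesssim L^{1/r}\bigl(\|g\|_{L^r}+L^{-1}\|g'\|_{L^r}\bigr)$, obtained from the fundamental theorem of calculus, averaging over an interval of length $\sim L^{-1}$ containing $t$, and H\"older, is applied successively in $t$ with $L=L_1$ and in $s$ with $L=L_2$. Your explicit second step, which puts the supremum in $s$ inside the $L^r(E_1)$ norm before applying the bound at fixed $t$, is what the paper's phrase ``apply this successively in the two variables'' abbreviates.
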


\begin{proof}
Let $L\ge1$ and let $g$ be smooth on $E_1$. For each $t\in E_1$, choose an interval $I\subset E_1$ containing $t$ and of length comparable to $L^{-1}$. The fundamental theorem of calculus and averaging give
\[
|g(t)|\lesssim L\int_I|g|+\int_I|g'|
\lesssim L^{1/r}\bigl(\|g\|_{L^r(E_1)}+L^{-1}\|g'\|_{L^r(E_1)}\bigr).
\]
Apply this successively in the two variables, with $L=L_1$ and $L=L_2$, and then bound the local norms by the norms on the fixed rectangle. Near the endpoints use a one-sided interval, or extend to a fixed slightly larger rectangle. This gives \eqref{eq:parameter-sobolev}.
\end{proof}

We will apply the lemma in the coordinates $(\mu,\nu)$ below, first at each fixed $x$ and then in the spatial norm. The scales $L_1,L_2$ measure parameter derivatives; no parameter Fourier support condition is required.

Retain the reference angle $\theta_0\sim b$ chosen between the two enlarged angular supports in Section~\ref{sec:bilinear-spacetime}; both supports then lie within $O(\delta)$ of $\theta_0$. With $H_{t,s}$ as in \eqref{eq:angular-phase}, set
\begin{equation}
(\mu,\nu)=\Bigl(H_{t,s}(\theta_0),\frac{s^2-t^2}{H_{t,s}(\theta_0)}\Bigr).
\label{eq:sobolev-coordinates}
\end{equation}
These coordinates leave $x$ unchanged. At the center angle, the phase depends only on $\mu$, so its $\nu$ derivative vanishes there. This is the cancellation responsible for the smaller derivative scale $\lambda b\delta$.

The inverse functions $t=t(\mu,\nu)$ and $s=s(\mu,\nu)$ satisfy
\begin{equation}
(t^2,s^2)=(\mu^2-\mu\nu\sin^2\theta_0,\ \mu^2+\mu\nu\cos^2\theta_0),
\label{eq:inverse-sobolev-coordinates}
\end{equation}
and $\partial(\mu,\nu)/\partial(t,s)=2ts/H_{t,s}(\theta_0)^2\sim1$, so that $dt\,ds=\mu^2/(2ts)\,d\mu\,d\nu$.
The inverse map has uniformly bounded derivatives of all required orders on a fixed finite cover of the parameter set. On each piece, choose a cutoff equal to one on that piece and supported in a slightly larger neighborhood, and absorb it into the amplitude. The image of the cutoff support in the $(\mu,\nu)$ coordinates lies in a fixed rectangle on which the expressions for $t^2$ and $s^2$ in \eqref{eq:inverse-sobolev-coordinates} are positive and smooth. Adding finitely many such estimates recovers the full supremum.

Write $\chi(\mu,\nu)$ for this parameter cutoff. Recall from Section~\ref{sec:angular-localization} that the amplitude of $\mathcal A^+_{\lambda,I}$ contains the angular cutoff $\chi_I$. The transformed branch operator is
\begin{equation}
\begin{aligned}
\widetilde{\mathcal A}_{\lambda,I}g(x,\mu,\nu)
&:=\chi(\mu,\nu)\mathcal A^+_{\lambda,I}g(x,t(\mu,\nu),s(\mu,\nu))\\
&=\frac{\lambda^{-1/2}}{(2\pi)^2}
\int e^{i\{x\cdot\xi+\phi(\mu,\nu,\xi)\}}
\widetilde m_I(\mu,\nu,\xi)\widehat g(\xi)\,d\xi,
\end{aligned}
\label{eq:transformed-branch}
\end{equation}
where 
\begin{align*}
\phi(\mu,\nu,\xi)&=\Phi_{t(\mu,\nu),s(\mu,\nu)}(\xi),\\
\widetilde m_I(\mu,\nu,\xi)&=\chi(\mu,\nu)\chi_I(\arg\xi)
m^+_\lambda(t(\mu,\nu),s(\mu,\nu),\xi).
\end{align*}
The frequency variable is unchanged, so the integral acquires no Jacobian. Spacetime norms change by the bounded Jacobian $\partial(\mu,\nu)/\partial(t,s)\sim1$ following \eqref{eq:inverse-sobolev-coordinates}, and the supremum on each parameter piece is preserved by the one-to-one coordinate change.

For $\xi=\lambda r e(\theta_0+\delta u)$ write
\[
D(u)=\sin^2(\theta_0+\delta u)-\sin^2\theta_0,
\]
and put $H=(\mu^2+\mu\nu D(u))^{1/2}$.
Substitution of \eqref{eq:inverse-sobolev-coordinates} gives $\phi(\mu,\nu,\xi)=\lambda rH$, and
\begin{equation}
\frac{D(u)}{b\delta}
=\frac{\sin(2\theta_0+\delta u)}{b}\frac{\sin(\delta u)}{\delta}.
\label{eq:angular-difference}
\end{equation}
All fixed-order $u$ derivatives of $D/(b\delta)$ are bounded, since $\theta_0\sim b$ and $\delta\lesssim b$. In particular $D=O(b\delta)$ rather than just $O(\delta)$.

The derivatives needed below are
\begin{equation}
(H_\mu,H_\nu,H_{\mu\nu})=\Bigl(\frac{2\mu+\nu D}{2H},\frac{\mu D}{2H},\frac{\mu\nu D^2}{4H^3}\Bigr).
\label{eq:phase-parameter-derivatives}
\end{equation}
Thus
\begin{equation}
\Bigl(\frac{\phi_\mu}{\lambda},\frac{\phi_\nu}{\lambda b\delta}\Bigr)=r\Bigl(\frac{2\mu+\nu D}{2H},\frac{\mu}{2H}\frac{D}{b\delta}\Bigr).
\label{eq:normalized-phase-derivatives}
\end{equation}
Since $H=H_{t,s}(\theta_0+\delta u)\ge\min(t,s)$, \eqref{eq:angular-difference} and the remark following it show that both quotients have uniformly bounded normalized frequency and parameter derivatives. On a pair of supports of separation scale $\delta$, the two scales in Lemma~\ref{lem:parameter-sobolev} can consequently be taken to be
\begin{equation}
(L_1,L_2)=(\lambda,\max\{1,\lambda b\delta\}).
\label{eq:sobolev-scales}
\end{equation}
For Proposition~\ref{prop:separated}, the assumptions $b\gtrsim\delta\gtrsim\lambda^{-1/2}$ give
\[
\lambda b\delta\gtrsim\lambda\delta^2\gtrsim 1.
\]
Thus $L_2\sim\lambda b\delta$, with fixed comparison constants. Below we use $(\lambda b\delta)^{-1}\partial_\nu$ to display the formulas.

\subsection{Normalized derivatives of the branch operators}\label{sec:normalized-derivatives}
We check that the normalized parameter derivatives used in
Lemma~\ref{lem:parameter-sobolev} preserve the symbol bounds.
This allows Proposition~\ref{prop:spacetime} to be applied to each
differentiated product of branches. Set
\begin{equation}
(\mathsf D_\mu,\mathsf D_\nu)=(\lambda^{-1}\partial_\mu,(\lambda b\delta)^{-1}\partial_\nu).
\label{eq:normalized-parameter-derivatives}
\end{equation}
For $\alpha,\beta\in\{0,1\}$ define
\begin{equation}
\widetilde{\mathcal A}^{[\alpha,\beta]}_{\lambda,I}g
:=\mathsf D_\mu^\alpha\mathsf D_\nu^\beta\widetilde{\mathcal A}_{\lambda,I}g
=\frac{\lambda^{-1/2}}{(2\pi)^2}\int e^{i(x\cdot\xi+\phi)}
m_I^{[\alpha,\beta]}(\mu,\nu,\xi)\widehat g(\xi)\,d\xi,
\label{eq:differentiated-branch}
\end{equation}
where
\[
m_I^{[\alpha,\beta]}
=e^{-i\phi}\mathsf D_\mu^\alpha\mathsf D_\nu^\beta(e^{i\phi}\widetilde m_I).
\]
For instance,
\[
m_I^{[1,0]}=\mathsf D_\mu\widetilde m_I+i(\mathsf D_\mu\phi)\widetilde m_I,
\]
$m_I^{[0,1]}$ is given by the same formula with $\mathsf D_\nu$ in place of $\mathsf D_\mu$, and the mixed derivative is
\begin{equation}
\begin{aligned}
m_I^{[1,1]}={}&\mathsf D_\mu\mathsf D_\nu\widetilde m_I
+i(\mathsf D_\mu\phi)\mathsf D_\nu\widetilde m_I
+i(\mathsf D_\nu\phi)\mathsf D_\mu\widetilde m_I\\
&+i(\mathsf D_\mu\mathsf D_\nu\phi)\widetilde m_I
-(\mathsf D_\mu\phi)(\mathsf D_\nu\phi)\widetilde m_I.
\end{aligned}
\label{eq:differentiated-symbols}
\end{equation}
By \eqref{eq:phase-parameter-derivatives}, the mixed phase factor satisfies
\begin{equation}
\mathsf D_\mu\mathsf D_\nu\phi
=\frac{r\mu\nu D^2}{4\lambda H^3b\delta}=O(b\delta/\lambda).
\label{eq:mixed-phase-derivative}
\end{equation}
Equations \eqref{eq:angular-difference}, \eqref{eq:normalized-phase-derivatives}, and \eqref{eq:mixed-phase-derivative} control every phase factor in \eqref{eq:differentiated-symbols}.

On $\xi=\lambda r e(\theta_0+\delta u)$, the amplitude of $\mathcal A^+_{\lambda,I}$, namely \eqref{eq:branch-symbol} multiplied by $\chi_I(\arg\xi)$, becomes
\begin{equation}
\widetilde\psi(r)(rH)^{-1/2}c_+(\lambda rH)\chi_I(\theta_0+\delta u).
\label{eq:normalized-branch-symbol}
\end{equation}
The last factor has uniformly bounded derivatives in $u$.
The quantity $rH$ lies in a fixed compact subinterval of $(0,\infty)$, and the bounds on $c_\pm$ following \eqref{eq:bessel-expansion} give
\[
\left|\frac{d^j}{dq^j}c_+(\lambda q)\right|\lesssim_j 1
\qquad(q\sim 1).
\]
The chain rule therefore gives uniformly bounded derivatives of \eqref{eq:normalized-branch-symbol} in $r,u,\mu,\nu$. This remains true after any fixed number of unnormalized parameter derivatives. Multiplying by a parameter cutoff produces no loss because $\lambda,\lambda b\delta\gtrsim 1$. Hence all the amplitudes in \eqref{eq:differentiated-branch} satisfy the normalized symbol bounds.

\subsection{Kernel estimates}\label{sec:kernel-estimates}
For a direction $\theta_c$ and $0<h\lesssim1$, let $n=e(\theta_c)$, $n^\perp=e'(\theta_c)$, and
\begin{equation}
\xi=B_{\lambda,h}(r,u):=\lambda rn+\lambda hu n^\perp,
\label{eq:cap-frequency-map}
\end{equation}
so that $|\det B_{\lambda,h}|=\lambda^2h$.
We call the image under $B_{\lambda,h}$ of a fixed bounded set of $(r,u)$, with $r$ bounded away from zero, a cap of width $h$ about $\theta_c$. For width $h\lesssim\lambda^{-1/2}$, the change of variables \eqref{eq:cap-frequency-map} reduces the kernel to a Schwartz function with seminorms uniform in the scales; larger supports are covered by such caps. The $L^1$ bound gives the bilinear $L^\infty$ endpoint; the remaining bounds will be used for Proposition~\ref{prop:short}.

\begin{lemma}\label{lem:kernel}
Let $(t,s)\in\mathbb I^2$, and let $K_{t,s,\lambda,I}$ be the convolution kernel of $\mathcal A^+_{\lambda,I}$ with $\chi_I$ replaced by a smooth angular cutoff supported in an interval of length comparable to $h$, whose $k$-th derivatives are $O(h^{-k})$. The following hold uniformly in $(t,s)$.
\begin{enumerate}\addtolength{\itemindent}{-8pt}\addtolength{\leftskip}{-8pt}
\renewcommand{\theenumi}{\textbf{\alph{enumi}}}\renewcommand{\labelenumi}{(\theenumi)}
\item\label{lem:kernel-large} If $h\gtrsim\lambda^{-1/2}$, then $\|K_{t,s,\lambda,I}\|_1\lesssim h$ and $\|K_{t,s,\lambda,I}\|_\infty\lesssim\lambda$.
\item\label{lem:kernel-small} If $0<h\le C_0\lambda^{-1/2}$, then $\|K_{t,s,\lambda,I}\|_1\lesssim_{C_0}\lambda^{-1/2}$ and $\|K_{t,s,\lambda,I}\|_\infty\lesssim_{C_0}\lambda$.
\item\label{lem:kernel-amplitude} The $L^1$ bounds in \eqref{lem:kernel-large} and \eqref{lem:kernel-small} remain valid if $m^+_\lambda\chi_I$ is replaced by an amplitude with the same support whose seminorms in the coordinates $(r,u)$ of \eqref{eq:cap-frequency-map} are uniformly bounded on each of $O(1+h\lambda^{1/2})$ caps of width at most $C_0\lambda^{-1/2}$ covering the support. The prefactor $\lambda^{-1/2}$ in \eqref{eq:branch-operator} is retained.
\item\label{lem:kernel-derivative} In particular, \eqref{lem:kernel-amplitude} applies to the amplitudes $m_I^{[\alpha,\beta]}$ of \eqref{eq:differentiated-branch} when the scales $\lambda$ and $\lambda b\delta$ in \eqref{eq:normalized-parameter-derivatives} are replaced by
\begin{itemize}
\item $\lambda$ and $\max\{1,\lambda bh\}$, for a support of width $h\lesssim b$ in $I_b$;
\item $\lambda$ and $1$, for a support in $[0,C_0\lambda^{-1/2}]$.
\end{itemize}
\end{enumerate}
\end{lemma}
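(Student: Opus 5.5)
The proof will be a stationary-phase-free rescaling argument on caps of width $\lesssim\lambda^{-1/2}$, followed by summation over caps. Write the kernel as
\[
K(x)=\lambda^{-1/2}(2\pi)^{-2}\int e^{i(x\cdot\xi+\Phi_{t,s}(\xi))}a(\xi)\,d\xi ,
\]
where $a$ is the cut-off amplitude.

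\textbf{Part (b): a single cap.} Let $h\le C_0\lambda^{-1/2}$ and substitute $\xi=B_{\lambda,h}(r,u)$ about the centre $\theta_c$ of the support. Expand the phase about the ray $\theta_c$. Homogeneity gives
\[
\Phi_{t,s}(\lambda r n+\lambda h u n^\perp)=\lambda r\,\Phi_{t,s}(n)+\lambda h u\,\partial_{n^\perp}\Phi_{t,s}(n)+R,
\]
where $R$ is $\lambda$ times a smooth homogeneous-degree-one function of $(r,hu)$ vanishing to second order in $hu$. Hence $R=\lambda h^2u^2 g(r,hu)$, and $\lambda h^2\lesssim C_0^2$ makes $R$ a smooth function of $(r,u)$ with uniformly bounded derivatives. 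The linear part is a translation of $x$, namely $x\mapsto x+\nabla\Phi_{t,s}(n)$. After that translation,
\[
K(x)=\lambda^{-1/2}\lambda^2h\;\check b\bigl(\lambda(x'\cdot n),\,\lambda h(x'\cdot n^\perp)\bigr),
\]
where $b(r,u)=e^{iR}a(B_{\lambda,h}(r,u))$ is supported in a fixed compact set and has uniformly bounded derivatives. Indeed, the amplitude derivative bounds in $u$ hold because the cutoff has $k$-th derivatives $O(h^{-k})$ and $m^+_\lambda$ obeys the Bessel symbol bounds. Therefore $\check b$ is Schwartz with uniform seminorms. This gives
\[
\|K\|_1\lesssim\lambda^{-1/2}\|\check b\|_1\lesssim\lambda^{-1/2},\qquad \|K\|_\infty\lesssim\lambda^{3/2}h\lesssim\lambda .
\]

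\textbf{Part (a): decomposing into caps.} For $h\gtrsim\lambda^{-1/2}$, split the angular cutoff by a smooth partition into $O(h\lambda^{1/2})$ caps of width $\lambda^{-1/2}$. The partition functions have derivatives $O(\lambda^{k/2})$, which are admissible at width $\lambda^{-1/2}$. Each cap contributes $\lambda^{-1/2}$ in $L^1$ by the argument for (b), so the triangle inequality gives $\|K\|_1\lesssim h$. For $L^\infty$, bound trivially by the $L^1$ norm of the amplitude: $\lambda^{-1/2}\cdot\lambda^2h\lesssim\lambda^{3/2}h$. This is not $\lambda$, so here I would instead bound each cap's sup by $\lambda$ using its Schwartz decay in the rescaled coordinates. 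The translations $\nabla\Phi_{t,s}(e(\theta))$ trace out the ellipse, and cap kernels are essentially supported on $\lambda^{-1}\times\lambda^{-1/2}$ plates centred at points separated by about $\lambda^{-1/2}$ along the curvature direction. Consequently only $O(1)$ of them overlap at any given $x$, and the tails are summable. This gives $\|K\|_\infty\lesssim\lambda$. I expect this overlap count to be the main technical point: it needs the nonvanishing curvature of the ellipse, uniformly in $(t,s)\in\I^2$, and it needs the Schwartz tails to decay well in the $n^\perp$ direction at scale $\lambda^{-1/2}$.

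\textbf{Parts (c) and (d).} Part (c) is what the argument above actually used: only the cap-wise seminorm bounds and the number of caps, $O(1+h\lambda^{1/2})$, enter the $L^1$ bound. For (d), I would verify that $m_I^{[\alpha,\beta]}$ has uniform seminorms on caps of width $\lambda^{-1/2}$, by expressing $\mathsf D_\mu\phi$ and $\mathsf D_\nu\phi$ through \eqref{eq:normalized-phase-derivatives}. For a support of width $h\lesssim b$ in $I_b$, \eqref{eq:angular-difference} with $\delta$ replaced by $h$ gives $D=O(bh)$. So $\phi_\nu/\max\{1,\lambda bh\}$ is bounded, and its $u$-derivatives on a cap of width $\lambda^{-1/2}$ cost at most $\lambda b\cdot\lambda^{-1/2}/\max\{1,\lambda bh\}\lesssim1$. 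This uses $h\gtrsim\lambda^{-1/2}$; when $h<\lambda^{-1/2}$ the same ratio is still bounded since the maximum is at least $1$ and $\lambda b\lambda^{-1/2}\cdot\lambda^{-1/2}\lesssim b$.

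For a support in $[0,C_0\lambda^{-1/2}]$, take $\theta_0$ near $0$. Then $D=O(\lambda^{-1})$, so $\phi_\nu=O(1)$ with bounded cap derivatives, which is why the second scale can be $1$. The mixed term \eqref{eq:mixed-phase-derivative} is harmless in both cases, so (c) applies to every $m_I^{[\alpha,\beta]}$.
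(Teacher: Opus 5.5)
Your proposal is correct and follows essentially the same route as the paper: the cap rescaling with a Taylor remainder and translation by $\nabla\Phi_{t,s}(n)$ for (b), subdivision into $\lambda^{-1/2}$-caps with the triangle inequality for the $L^1$ bound and the $\lambda^{-1/2}$-separation of the centres along the ellipse, of bounded curvature, for the $L^\infty$ bound in (a), and a check of cap-wise seminorms of the differentiated amplitudes for (c)--(d). The only slip is your justification of the off-axis case $h<\lambda^{-1/2}$ in (d). There you should use caps of width $h$ itself, as you already did in (b), so that the derivative cost is $\lambda bh/\max\{1,\lambda bh\}\le1$; the expression you wrote is not the relevant cost.
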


\begin{proof}
First take $0<h\leq C_0\lambda^{-1/2}$ and use \eqref{eq:cap-frequency-map} with $\theta_c$ the center of the angular support.
Work in a fixed short angular chart, using a bounded number of such charts if necessary. On the annular support, $r=(|\xi|/\lambda)\cos(\arg\xi-\theta_c)$ is then bounded away from zero, while $u$ stays bounded. By homogeneity and Taylor's formula in the transverse direction,
\[
\Phi_{t,s}(B_{\lambda,h}(r,u))
=\lambda rH_{t,s}(\theta_c)+\lambda huH'_{t,s}(\theta_c)+R_{t,s,\lambda,h}(r,u),
\]
\[
R_{t,s,\lambda,h}(r,u)
=\lambda h^2u^2\int_0^1(1-\tau)
\bigl\langle D_\xi^2\Phi_{t,s}(rn+\tau hu n^\perp)n^\perp,n^\perp\bigr\rangle\,d\tau.
\]
This integral remainder is exact. Since $\lambda h^2\leq C_0^2$,
\[
\|R_{t,s,\lambda,h}\|_{C^N_{r,u}}\leq C_{N,C_0}.
\]
Define
\[
q_{t,s}=\nabla\Phi_{t,s}(n)=H_{t,s}(\theta_c)n+H'_{t,s}(\theta_c)n^\perp,
\]
and let
\[
\widetilde m_{t,s,\lambda,h}(r,u)
=e^{iR_{t,s,\lambda,h}(r,u)}m^+_\lambda(t,s,B_{\lambda,h}(r,u))
\chi_I(\arg B_{\lambda,h}(r,u)).
\]
This is a smooth amplitude on a fixed compact set, with uniform seminorms. For the angular cutoff, the relevant normalized argument is
\[
\frac{\arg B_{\lambda,h}(r,u)-\theta_c}{h}
=\frac{\arctan(hu/r)}{h}
=\frac ur\int_0^1\frac{d\tau}{1+\tau^2h^2u^2/r^2}.
\]
Thus its fixed-order $(r,u)$ derivatives remain bounded even as $h\to 0$. Let $\mathcal K_{t,s,\lambda,h}$ be its inverse Fourier transform in $(r,u)$, with the $(2\pi)^{-2}$ normalization. It is Schwartz, with uniform seminorms. The transformed convolution kernel is exactly
\begin{equation}
K_{t,s,\lambda,I}(y)
=\lambda^{3/2}h\mathcal K_{t,s,\lambda,h}\bigl(B_{\lambda,h}^{\mathsf T}(y+q_{t,s})\bigr).
\label{eq:kernel-rescaling}
\end{equation}
The prefactor is $\lambda^{-1/2}|\det B_{\lambda,h}|$. The sign of the translation is consistent with the positive phase: the kernel is concentrated near $-q_{t,s}$.

From \eqref{eq:kernel-rescaling},
\[
\begin{aligned}
|K_{t,s,\lambda,I}(y)|\leq C_{N,C_0}\lambda^{3/2}h\bigl(1
&+\lambda|(y+q_{t,s})\cdot n|\\
&+\lambda h|(y+q_{t,s})\cdot n^\perp|\bigr)^{-N}.
\end{aligned}
\]
Integration gives $\|K\|_1\lesssim_{C_0}\lambda^{-1/2}$, while $\|K\|_\infty\lesssim_{C_0}\lambda^{3/2}h\lesssim_{C_0}\lambda$. In particular, at $h=\lambda^{-1/2}$ this is a kernel of height $O(\lambda)$ and dimensions $\lambda^{-1}$ by $\lambda^{-1/2}$.

For $h\geq\lambda^{-1/2}$, decompose the smooth angular cutoff into $O(1+h\lambda^{1/2})$ pieces of width $\lambda^{-1/2}$ whose normalized derivatives are uniformly bounded. Adding their $L^1$ norms gives $O(h+\lambda^{-1/2})=O(h)$. To estimate the supremum norm, note that their centers $-q_{t,s}$ are separated in arclength by comparable multiples of $\lambda^{-1/2}$ along an ellipse of bounded positive curvature. Indeed, as the center angle varies, $q_{t,s}(\theta)=(t^2\cos\theta,s^2\sin\theta)/H_{t,s}(\theta)$ moves with velocity
\[
q'_{t,s}(\theta)=\frac{t^2s^2}{H_{t,s}(\theta)^3}e'(\theta),
\]
whose coefficient is bounded above and below. On each fixed short chart, projection onto its central tangent shows that chord distance is comparable to angular distance, uniformly in $t,s\in\mathbb I$. Each envelope is bounded by
\[
C_N\lambda(1+\lambda^{1/2}|y+q_{t,s}|)^{-N}.
\]
The number of such centers in a ball of radius $r\lambda^{-1/2}$ is $O(1+r)$. Summing the envelopes over dyadic annuli centered at $y$ therefore gives an $O(\lambda)$ bound when $N>2$.

The $L^1$ argument uses only the support and uniform amplitude seminorms in the cap coordinates, so it also proves the stated extension. For the normalized differentiated amplitudes in \eqref{eq:differentiated-branch}, it remains to check these seminorms after subdivision. With the scales in the statement, these are $\lambda$ and $\max\{1,\lambda bh\}$ for an off-axis support of width $h\lesssim b$, and $\lambda$ and $1$ for the axis cap used below. The maximum prevents a loss when a parameter derivative falls on a cutoff and $\lambda bh<1$. These scales and the original coordinates $(\mu,\nu)$ are retained when the frequency support is subdivided. In the off-axis case, for a subcap of width $h'\leq h$ centered at $\theta_c$, its normalized angular coordinate in the original support is
\[
\frac{\theta_c-\theta_0}{h}+\frac1h\arctan\frac{h'u}{r}.
\]
Since $|\theta_c-\theta_0|\lesssim h$ and $h'/h\leq 1$, this coordinate and all required $(r,u)$ derivatives are uniformly bounded.

For an axis cap, $\theta_0=0$ and $|\theta_c|+h'\lesssim_{C_0}\lambda^{-1/2}$. Under the linear frequency change \eqref{eq:cap-frequency-map}, the angle is $\theta=\theta_c+\arctan(h'u/r)$, so all fixed-order $(r,u)$ derivatives of $\lambda\sin^2\theta$ are uniformly bounded. Since
\[
H_{t,s}(\theta)^2=\mu^2+\mu\nu\sin^2\theta,
\]
the normalized phase derivatives with parameter scales $\lambda$ and $1$ have the same uniform bounds.

The normalized differentiated amplitudes therefore retain their uniform seminorms after the linear frequency change, and the same proof gives their $L^1$ bounds. This proves the lemma.
\end{proof}

\subsection{Proof of Proposition~\ref{prop:separated}}\label{sec:separated-proof}
We first combine the spacetime estimate with
Lemma~\ref{lem:parameter-sobolev} to obtain the $L^2$ endpoint,
and then interpolate with the $L^\infty$ endpoint obtained
from the kernel $L^1$ bound.

\subsubsection*{The $L^2$ endpoint}
Work on a fixed parameter coordinate neighborhood.
To apply Proposition~\ref{prop:spacetime}, write the differentiated product as a single oscillatory integral. Let
\[
G(x,\mu,\nu)=\widetilde{\mathcal A}_{\lambda,I}g_1(x,\mu,\nu)
\widetilde{\mathcal A}_{\lambda,I'}g_2(x,\mu,\nu),
\]
and $F(\xi,\eta)=\widehat g_1(\xi)\widehat g_2(\eta)$.
For $\alpha,\beta\in\{0,1\}$, using the normalized derivatives
in \eqref{eq:normalized-parameter-derivatives}, we have
\begin{equation}
\mathsf D_\mu^\alpha\mathsf D_\nu^\beta G(x,\mu,\nu)
=\frac{\lambda^{-1}}{(2\pi)^4}
\mathcal T_{n_{\alpha,\beta}}F(x,t(\mu,\nu),s(\mu,\nu)),
\label{eq:differentiated-product}
\end{equation}
where $\mathcal T_n$ has exactly the phase in \eqref{eq:bilinear-phase} and its amplitude, written in the $(\mu,\nu)$ coordinates, is
\[
n_{\alpha,\beta}
=e^{-i(\phi(\xi)+\phi(\eta))}\mathsf D_\mu^\alpha\mathsf D_\nu^\beta
\bigl[e^{i(\phi(\xi)+\phi(\eta))}\widetilde m_I(\xi)\widetilde m_{I'}(\eta)\bigr].
\]
The same formulas as \eqref{eq:differentiated-symbols}, now with the sum of the two phases, show that $n_{\alpha,\beta}$ satisfies \eqref{eq:symbol-bounds}. Its pullback is supported in \eqref{eq:normalized-support}, since the supports of $\chi_I$ and $\chi_{I'}$ lie within $O(\delta)$ of $\theta_0$ and are at distance comparable to $\delta$ by the hypotheses of Proposition~\ref{prop:separated}. Parameter differentiation does not enlarge the angular supports, which lie in $[9b/10,21b/10]$ by the definition of $\chi_I$ in Section~\ref{sec:angular-localization}. The coordinate change between $(\mu,\nu)$ and $(t,s)$ has uniformly
bounded derivatives on the fixed coordinate neighborhoods. Hence
$n_{\alpha,\beta}$ satisfies the symbol assumptions of
Proposition~\ref{prop:spacetime}.

In \eqref{eq:differentiated-product}, the prefactor is $\lambda^{-1}(2\pi)^{-4}$, while the stated Fourier convention gives
\[
\|F\|_{L^2_{\xi,\eta}}=\|\widehat g_1\|_2\|\widehat g_2\|_2
=(2\pi)^2\|g_1\|_2\|g_2\|_2.
\]
Substituting these into \eqref{eq:spacetime-bound} gives
\begin{equation}\label{eq:bilinear-L2}
\|\mathsf D_\mu^\alpha\mathsf D_\nu^\beta G\|_{L^2_{x,\mu,\nu}}
\lesssim\lambda^{-1}b^{-1/2}\delta^{-2}\|g_1\|_2\|g_2\|_2.
\end{equation}
The constant includes the bounded parameter Jacobian $\partial(\mu,\nu)/\partial(t,s)\sim1$ following \eqref{eq:inverse-sobolev-coordinates}. This is the spacetime bound used in the maximal $L^2$ estimate.

Since $\mathcal B_{\lambda,I,I'}$ in \eqref{eq:bilinear-maximal} is a supremum of products evaluated at a common parameter, the parameter Sobolev inequality applies to the product $G$. Lemma~\ref{lem:parameter-sobolev}, with $r=2$ and the scales in \eqref{eq:sobolev-scales}, gives at each $x$, recalling the normalized derivatives \eqref{eq:normalized-parameter-derivatives},
\[
\sup_{\mu,\nu}|G(x,\mu,\nu)|
\lesssim\lambda(b\delta)^{1/2}\sum_{\alpha,\beta\in\{0,1\}}
\|\mathsf D_\mu^\alpha\mathsf D_\nu^\beta G(x,\cdot,\cdot)\|_{L^2_{\mu,\nu}}.
\]
Take $L_x^2$, use Fubini, and apply \eqref{eq:bilinear-L2} to each of the four terms. The resulting factor is
\[
\lambda(b\delta)^{1/2}\lambda^{-1}b^{-1/2}\delta^{-2}=\delta^{-3/2}.
\]
After summing over the fixed finite parameter cover, this proves
\begin{equation}\label{eq:bilinear-L2-result}
\|\mathcal B_{\lambda,I,I'}(g_1,g_2)\|_2
\lesssim\delta^{-3/2}\|g_1\|_2\|g_2\|_2.
\end{equation}

\subsubsection*{The $L^\infty$ endpoint}
The other endpoint comes directly from Lemma~\ref{lem:kernel}. By \eqref{eq:angular-separation}, $|I|,|I'|\sim\delta\gtrsim\lambda^{-1/2}$, so each kernel has $L^1$ norm $O(\delta)$ by Lemma~\ref{lem:kernel}\eqref{lem:kernel-large}, uniformly in $(t,s)$. Young's inequality bounds each factor by $C\delta\|g_i\|_\infty$, and hence
\begin{equation}
\|\mathcal B_{\lambda,I,I'}(g_1,g_2)\|_\infty
\lesssim\delta^2\|g_1\|_\infty\|g_2\|_\infty.
\label{eq:bilinear-Linfty}
\end{equation}

\subsubsection*{Interpolation}
The operator $\mathcal B_{\lambda,I,I'}$ is a supremum and hence not bilinear, so we linearize it in order to interpolate between \eqref{eq:bilinear-L2-result} and \eqref{eq:bilinear-Linfty}. For fixed $\lambda,I,I'$ the kernels are Schwartz functions depending continuously on $(t,s)$. Hence the products in \eqref{eq:bilinear-maximal} are continuous in $(t,s)$, and the supremum may be taken over a countable dense subset of $\mathbb I^2$. For measurable $t,s:\mathbb R^2\to\mathbb I$, set
\[
B^{t,s}(g_1,g_2)(x)
=\mathcal A^+_{\lambda,I}g_1(x,t(x),s(x))
\mathcal A^+_{\lambda,I'}g_2(x,t(x),s(x)).
\]
This operator is bilinear, and by pointwise domination it satisfies \eqref{eq:bilinear-L2-result} and \eqref{eq:bilinear-Linfty} uniformly in the selection. Bilinear interpolation, with weight $2/r$ on the $L^2$ endpoint, gives for $2\leq r\leq\infty$
\begin{equation}
\|B^{t,s}(g_1,g_2)\|_r
\lesssim\delta^{-\frac32\frac2r+2(1-\frac2r)}\|g_1\|_r\|g_2\|_r
=\delta^{2-7/r}\|g_1\|_r\|g_2\|_r.
\label{eq:bilinear-interpolation}
\end{equation}
For given $g_1,g_2$, choosing at each $x$ the first of finitely many parameters that maximizes the absolute value of the product gives a measurable selection. These finite maxima increase pointwise to $\mathcal B_{\lambda,I,I'}(g_1,g_2)$. Since \eqref{eq:bilinear-interpolation} is uniform in the selection, monotone convergence gives the same bound for $\mathcal B_{\lambda,I,I'}$.

At $r=7/2$ the exponent $2-7/r$ vanishes, which is \eqref{eq:separated-maximal}. Thus the loss $\delta^{-3/2}$ at the $L^2$ endpoint, coming from the bound $\lambda^{-1}b^{-1/2}\delta^{-2}$ in \eqref{eq:bilinear-L2} and the Sobolev factor $\lambda(b\delta)^{1/2}$, is exactly balanced at $r=7/2$ by the gain $\delta^2$ at the $L^\infty$ endpoint. This completes the proof of Proposition~\ref{prop:separated}.\qed

\subsection{Proof of Proposition~\ref{prop:short}}\label{sec:short-proof}
For a short cap, the differentiated kernel $L^1$ bounds and the parameter Sobolev inequality give a linear $L^6$ estimate. We interpolate this with the $L^1\to L^\infty$ estimate supplied by the kernel supremum bound. The axis case has a smaller Sobolev cost because its second parameter derivative scale is $1$.

\subsubsection*{Short caps off the axis}
Let $h=|I|$. Use \eqref{eq:transformed-branch} with reference angle at the center of $I$, and use \eqref{eq:normalized-parameter-derivatives}--\eqref{eq:differentiated-branch} with $\delta=h$. Since $I\subset I_b$ and $\lambda^{-1/2}\leq|I|\leq C_0\lambda^{-1/2}$, we have $\lambda^{-1/2}\leq h\leq b$, so $\lambda bh\geq\lambda h^2\geq 1$; the two parameter scales are therefore $\lambda$ and $\lambda bh$.

The branch and each normalized parameter derivative have convolution kernel $L^1$ norm $O_{C_0}(\lambda^{-1/2})$ by Lemma~\ref{lem:kernel}\eqref{lem:kernel-small}--\eqref{lem:kernel-derivative}. Consequently, for $\alpha,\beta\in\{0,1\}$, Young's inequality at each parameter and integration over a fixed parameter rectangle give
\[
\|\widetilde{\mathcal A}^{[\alpha,\beta]}_{\lambda,I}g\|_{L^6_{x,\mu,\nu}}
\lesssim_{C_0}\lambda^{-1/2}\|g\|_6.
\]
Applying Lemma~\ref{lem:parameter-sobolev} first at each $x$ gives
\[
\Big\|\sup_{(t,s)\in\mathbb I^2}|\mathcal A^+_{\lambda,I}g|\Big\|_6
\lesssim_{C_0}(\lambda^2bh)^{1/6}\lambda^{-1/2}\|g\|_6
\lesssim_{C_0}\lambda^{-1/4}b^{1/6}\|g\|_6.
\]
By Lemma~\ref{lem:kernel}\eqref{lem:kernel-small}, the kernel supremum bound gives the other endpoint
\[
\Big\|\sup_{(t,s)\in\mathbb I^2}|\mathcal A^+_{\lambda,I}g|\Big\|_\infty
\lesssim_{C_0}\lambda\|g\|_1.
\]
Fix a measurable parameter selection before interpolating the resulting linear operator. Both endpoint bounds are uniform in this selection. With weights $1/7$ and $6/7$, the input and output exponents satisfy
\[
\left(\frac27,\frac17\right)
=\frac17(1,0)+\frac67\left(\frac16,\frac16\right).
\]
Parameter continuity and the finite-maxima argument in
Section~\ref{sec:separated-proof} restore the supremum. We obtain
\[
\begin{aligned}
\Big\|\sup_{(t,s)\in\mathbb I^2}|\mathcal A^+_{\lambda,I}g|\Big\|_7
&\lesssim_{C_0}\lambda^{1/7}(\lambda^{-1/4}b^{1/6})^{6/7}\|g\|_{7/2}\\
&=\lambda^{-1/14}b^{1/7}\|g\|_{7/2}.
\end{aligned}
\]
This is \eqref{eq:short-cap-maximal}, the first assertion of Proposition~\ref{prop:short}.

\subsubsection*{The axis cap}
For the axis assertion let $I\subset[0,C_0\lambda^{-1/2}]$ and use \eqref{eq:sobolev-coordinates} with $\theta_0=0$. In this case
\[
H_{t,s}(\theta)^2=\mu^2+\mu\nu\sin^2\theta,
\]
where $\mu=t$ and $\nu=(s^2-t^2)/t$.
On the support, $\sin^2\theta=O_{C_0}(\lambda^{-1})$. Write $\xi=\lambda r e(\theta)$ and $H=(\mu^2+\mu\nu\sin^2\theta)^{1/2}$. The phase derivatives are
\[
\Bigl(\frac{\phi_\mu}{\lambda},\phi_\nu\Bigr)
=r\Bigl(\frac{2\mu+\nu\sin^2\theta}{2H},\ \lambda\frac{\mu\sin^2\theta}{2H}\Bigr)=O_{C_0}(1).
\]
These bounds also hold after fixed numbers of normalized frequency and parameter derivatives. In particular, on a cap of width $h\lesssim_{C_0}\lambda^{-1/2}$ centered at $\theta_c$, writing $\theta=\theta_c+hu$ leaves all derivatives of $\lambda\sin^2(\theta_c+hu)$ uniformly bounded. The two parameter scales are therefore $\lambda$ and $1$, up to $C_0$-dependent constants. Thus the normalized derivatives here are $\lambda^{-1}\partial_\mu$ and $\partial_\nu$. If $I$ is smaller than $\lambda^{-1/2}$, use its own width in the linear map \eqref{eq:cap-frequency-map}; the kernel proof is uniform in that width. The operator in \eqref{eq:transformed-branch} and these derivatives then have the same kernel $L^1$ bound as before, by Lemma~\ref{lem:kernel}\eqref{lem:kernel-small}--\eqref{lem:kernel-derivative}, and hence
\[
\Big\|\sup_{(t,s)\in\mathbb I^2}|\mathcal A^+_{\lambda,I}g|\Big\|_6
\lesssim_{C_0}\lambda^{1/6}\lambda^{-1/2}\|g\|_6
=\lambda^{-1/3}\|g\|_6.
\]
The same interpolation and supremum argument gives
\[
\Big\|\sup_{(t,s)\in\mathbb I^2}|\mathcal A^+_{\lambda,I}g|\Big\|_7
\lesssim_{C_0}\lambda^{1/7}(\lambda^{-1/3})^{6/7}\|g\|_{7/2}
=\lambda^{-1/7}\|g\|_{7/2}.
\]
This proves \eqref{eq:axis-cap-maximal} and completes the proof of Proposition~\ref{prop:short}, and with it the sufficiency part of Theorem~\ref{thm:main}.\qed

\section{Necessary conditions for the centered operator}\label{sec:necessary-conditions}
We prove the necessity assertion of Theorem~\ref{thm:main}. We first test the estimate on a large disc, a thin neighborhood of an ellipse,
and a thin strip around a circular arc. These are two-parameter versions
of the standard examples for the circular maximal function~\cite{Lee,Schlag,SS},
and they give the three sides of the triangle $\Delta$ in \eqref{eq:triangle}. A smooth
cusp construction then rules out weak type for $p\le3$. For the global
operator, the failure of strong type for $p\le3$ was noted in~\cite{LLOelliptic};
the cusp construction gives the weak type statement for the local
operator $\M$. For $t,s>0$ write
$E_{t,s}=\{(t\cos\theta,s\sin\theta):\theta\in[0,2\pi)\}$ for the
centered ellipse with semiaxes $t,s$. The letters $\delta$ and $h$ and
the Cartesian coordinates used in these constructions are local to
this section.

\subsubsection*{The condition $1/q\le1/p$}
For $f_R=\mathbf1_{B(0,R)}$, $R\ge4$, we have
$\supp\sigma_{t,s}\subset B(0,2)$ for $(t,s)\in\I^2$, so $\M f_R=1$
on $B(0,R-2)$. Letting $R\to\infty$ in $R^{2/q}\lesssim R^{2/p}$
gives $1/q\le1/p$.

\subsubsection*{The condition $1/p\le2/q$}
Fix $(t_0,s_0)\in\I^2$, let $E=E_{t_0,s_0}$, and let $f_\delta$
be the indicator of the $\delta$-neighborhood of $E$. Its support has
area comparable to $\delta$. If $|x|\le\delta$, then $x+E$ lies in
this neighborhood, so $\M f_\delta(x)\ge\A_{t_0,s_0}f_\delta(x)=1$.
Hence $\delta^{2/q}\lesssim\delta^{1/p}$ as $\delta\downarrow0$,
which gives $1/p\le2/q$.

\subsubsection*{The condition $4/p\le1+1/q$: a thin strip around a circular arc}
Let $0<\delta\ll1$ and $h=\delta^{1/3}$. The test function is the
indicator of the $\delta$-neighborhood of an arc of length comparable
to $h$ on a fixed circle, so its support has area comparable to
$\delta h=h^4$. We find a set of centers of area comparable to $h$
such that, for each center $x$, some ellipse $x+E_{t,s}$ contains an
arc of length comparable to $h$ in this neighborhood. Then $\M f\gtrsim h$
on that set, and comparing norms gives $h^{1+1/q}\lesssim h^{4/p}$.
To arrange this, we translate the ellipses so that they share a point
and tangent, and restrict their curvatures at that point to an interval
of length comparable to $h$. On arcs of length comparable to $h$, the
second-order deviation of the curves is then $O(h\cdot h^2)$, and the
third-order deviation is $O(h^3)$.

\subsubsection*{Step 1: a common point and tangent}
Fix $r_0\in(1,2)$ and restrict $(t,s)$ to a small fixed neighborhood
of $(r_0,r_0)$. The upper half of $E_{t,s}$ is the graph
\[
 y_2=\psi_{t,s}(y_1),
\]
where $\psi_{t,s}(a)=s\sqrt{1-a^2/t^2}$.
We align the points where the slope of this graph is $-1$; they stay
away from the axes and from the vertical tangents of the ellipses. Put
\[
 \bigl(u(t,s),v(t,s)\bigr)=\frac{(t^2,s^2)}{\sqrt{t^2+s^2}}.
\]
A direct substitution and differentiation give
\[
 (\psi_{t,s},\psi'_{t,s},-\psi''_{t,s})(u(t,s))=\bigl(v(t,s),-1,\mathfrak c(t,s)\bigr),
\]
where $\mathfrak c(t,s)=(t^2+s^2)^{3/2}/(t^2s^2)$.
In particular $\mathfrak c_0:=\mathfrak c(r_0,r_0)=2\sqrt2/r_0$.

Let $u_0=v_0=r_0/\sqrt2$, so that
$(u_0,v_0)=(u(r_0,r_0),v(r_0,r_0))$ is the corresponding point on
the circle of radius $r_0$. Choose the center
\begin{equation}\label{eq:test-centers}
 x(t,s)=(u_0-u(t,s),\ v_0-v(t,s)).
\end{equation}
After translating $E_{t,s}$ by $x(t,s)$, the point $(u(t,s),v(t,s))$
moves to $(u_0,v_0)$. Writing $w$ for the horizontal displacement from
this common point, the spatially shifted ellipse $x(t,s)+E_{t,s}$
has the local graph $y_1=u_0+w$, $y_2=g_{t,s}(w)$, where
\begin{equation}\label{eq:translated-ellipse}
 g_{t,s}(w)=v_0-v(t,s)+\psi_{t,s}(u(t,s)+w).
\end{equation}
Thus every translated graph has the same point and first derivative:
\[
 (g_{t,s},g'_{t,s},g''_{t,s})(0)=\bigl(v_0,-1,-\mathfrak c(t,s)\bigr).
\]
Since $x(r_0,r_0)=0$, the reference graph $g_0=g_{r_0,r_0}$ is the
upper half of the circle of radius $r_0$:
$g_0(w)=\sqrt{r_0^2-(u_0+w)^2}$. Since
$t^2-u(t,s)^2=t^2s^2/(t^2+s^2)>1/2$ and $u(t,s)<t<2$, we have
\begin{equation}\label{eq:third-derivative-bound}
 t^2-(u(t,s)+w)^2\ge\frac12-4|w|-w^2>\frac14
 \qquad(|w|\le\bar w),
\end{equation}
where $\bar w=1/20$.
Hence the third derivatives of $g_{t,s}$ are uniformly bounded on
$[-\bar w,\bar w]$, and Taylor's formula gives, with a fixed constant
$C_3$,
\begin{equation}\label{eq:contact-expansion}
 \begin{aligned}
 g_{t,s}(w)-g_0(w)&=-\frac{\mathfrak c(t,s)-\mathfrak c_0}{2}w^2+R_{t,s}(w),\\
 |R_{t,s}(w)|&\le C_3|w|^3\qquad(|w|\le\bar w).
 \end{aligned}
\end{equation}

\subsubsection*{Step 2: the parameter set and its centers}
Since $\partial_s \mathfrak c(r_0,r_0)=-\sqrt2/r_0^2\ne0$, we may choose a
small open rectangle $\mathcal Q\Subset(1,2)^2$ about $(r_0,r_0)$ on
which $T(t,s)=(t,\mathfrak c(t,s))$ is a diffeomorphism onto its image, with
Jacobian $|\partial_s\mathfrak c|\sim1$, and whose image contains a fixed
rectangle about $(r_0,\mathfrak c_0)$. Define
\[
 \Pi_h=\{(t,s)\in\mathcal Q:|\mathfrak c(t,s)-\mathfrak c_0|\le h\}.
\]
For small $h$, $T(\Pi_h)$ is the part of $T(\mathcal Q)$ in the band
$|\mathfrak c-\mathfrak c_0|\le h$. It contains a rectangle of fixed length and width
$2h$, and it lies in a strip of the same width over a bounded
$t$-interval. Hence $|\Pi_h|\sim h$.

Next, the centers $x(t,s)$, $(t,s)\in\Pi_h$, fill a set of comparable
area. The output Jacobian is
\begin{equation}\label{eq:center-jacobian}
 \frac{\partial(x_1,x_2)}{\partial(t,s)}
 =\frac{\partial(u,v)}{\partial(t,s)}
 =\frac{2ts}{t^2+s^2}\sim1.
\end{equation}
Since $u+v=\sqrt{t^2+s^2}$, we have $t^2=u(u+v)$ and
$s^2=v(u+v)$. Thus $(t,s)$ is determined by $(u,v)$, and the maps
$(t,s)\mapsto(u,v)$ and $(t,s)\mapsto x(t,s)$ are injective.
Consequently
\begin{equation}\label{eq:center-measure}
 |x(\Pi_h)|\sim h.
\end{equation}

\subsubsection*{Step 3: the test function and the lower bound}
Choose $C_2\ge\frac12+C_3$ and take $\delta$ so small that
$h\le\bar w$. Define
\begin{equation}\label{eq:test-neighborhood}
 F_\delta=\{(u_0+w,y_2):|w|\le h,\ |y_2-g_0(w)|\le C_2\delta\},
\end{equation}
and $f_\delta=\mathbf1_{F_\delta}$, so that $|F_\delta|=4C_2\delta h\sim\delta h$.
Thus $F_\delta$ is the $C_2\delta$-neighborhood, in the vertical
direction, of an arc of length comparable to $h$ on the circle of
radius $r_0$. For $(t,s)\in\Pi_h$ and $|w|\le h$, \eqref{eq:contact-expansion} gives
\[
 |g_{t,s}(w)-g_0(w)|\le\tfrac12h^3+C_3h^3
 \le C_2h^3=C_2\delta.
\]
Hence the arc $\{(u_0+w,g_{t,s}(w)):|w|\le h\}$ of
$x(t,s)+E_{t,s}$ lies in $F_\delta$.

By the definition of $\sigma_{t,s}$ and its symmetry,
\[
 \A_{t,s}f(x)=\int f(x+y)\,d\sigma_{t,s}(y)
 =\frac1{2\pi}\int_0^{2\pi}f(x+(t\cos\theta,s\sin\theta))\,d\theta.
\]
On the upper half $0<\theta<\pi$ we substitute $a=t\cos\theta$, so
that $s\sin\theta=\psi_{t,s}(a)$. With $a=u(t,s)+w$, the pushforward
of normalized angular measure has density
\begin{equation}\label{eq:angular-density}
 \frac{da}{2\pi\sqrt{t^2-a^2}}
 =\frac{dw}{2\pi\sqrt{t^2-(u(t,s)+w)^2}}.
\end{equation}
Since $t\le2$, this density is at least $1/(4\pi)$; by the
choice of $\bar w$ it is also bounded above for $|w|\le\bar w$.
By \eqref{eq:test-centers} and \eqref{eq:translated-ellipse},
\[
 x(t,s)+(u(t,s)+w,\ \psi_{t,s}(u(t,s)+w))
 =(u_0+w,\ g_{t,s}(w)).
\]
Let $(t,s)\in\Pi_h$. Since $f_\delta\ge0$, we may discard the lower
half of the ellipse and the part of the upper half with $|w|>h$;
on the remaining arc $f_\delta=1$, because this arc lies in
$F_\delta$. Therefore
\begin{equation}\label{eq:test-lower-bound}
 \begin{aligned}
 \M f_\delta(x(t,s))
 &\ge\frac1{2\pi}\int_0^\pi
       f_\delta(x(t,s)+(t\cos\theta,s\sin\theta))\,d\theta\\
 &=\int_{-t}^t f_\delta(x(t,s)+(a,\psi_{t,s}(a)))
          \frac{da}{2\pi\sqrt{t^2-a^2}}\\
 &\ge\int_{-h}^{h}f_\delta(u_0+w,g_{t,s}(w))
          \frac{dw}{2\pi\sqrt{t^2-(u(t,s)+w)^2}}\\
 &=\int_{-h}^{h}\frac{dw}{2\pi\sqrt{t^2-(u(t,s)+w)^2}}
       \ge\frac{h}{2\pi}.
 \end{aligned}
\end{equation}
In the last two lines, $|w|\le h\le\bar w$, so by \eqref{eq:third-derivative-bound} the point
$a=u(t,s)+w$ lies in $(-t,t)$ and the integrand is well defined; the
final inequality uses the lower bound $1/(4\pi)$ for the density.

\subsubsection*{Step 4: conclusion}
If \eqref{eq:main-bound} holds, then \eqref{eq:center-measure}, the bound $|F_\delta|\sim\delta h$ following \eqref{eq:test-neighborhood}, and \eqref{eq:test-lower-bound} give
\[
 h\cdot h^{1/q}\lesssim\|\M f_\delta\|_{L^q(x(\Pi_h))}
 \le\|\M f_\delta\|_q\lesssim\|f_\delta\|_p
 =|F_\delta|^{1/p}\sim(\delta h)^{1/p}=h^{4/p},
\]
with $h^{1/q}=1$ when $q=\infty$. Letting $h\to0$ gives
\begin{equation}\label{eq:necessary-edge}
 \frac4p\le1+\frac1q.
\end{equation}

\subsubsection*{The triangle $\Delta$}
If \eqref{eq:main-bound} is assumed only for Schwartz functions, the
test functions above may be replaced by smooth functions with supports of comparable measure and
the same lower bounds up to fixed constants. The resulting half-planes
$1/q\le1/p$, $1/p\le2/q$, and \eqref{eq:necessary-edge} intersect in the triangle
$\Delta$ of \eqref{eq:triangle}: the first two lines meet at $(0,0)$, the first and
third at $(1/3,1/3)$, and the second and third at $(2/7,1/7)$. It
remains to show that $p>3$ is necessary, even for weak type.

\subsubsection*{Failure of weak type for $p\le3$}
We now use the common tangent without restricting the second
derivative. All the spatially shifted arcs lie in a quadratic cusp near their
common point. A logarithmic weight gives bounded $L^3$ input norms
while its integral along every arc diverges.

Choose a fixed closed rectangle $\mathcal Q_0\Subset\mathcal Q$ of
positive area. Use the same centers $x(t,s)$ and translated graphs
$g_{t,s}$ as in \eqref{eq:test-centers}--\eqref{eq:translated-ellipse}. The injectivity and Jacobian calculation
in \eqref{eq:center-jacobian} show that $\Omega=x(\mathcal Q_0)$ has fixed positive area.
Since all these graphs share their value and first derivative at zero,
there are fixed constants $C_1$ and $0<w_0<e^{-2}$ such that
\[
 |g_{t,s}(w)-(v_0-w)|\le C_1w^2,\qquad
 (t,s)\in\mathcal Q_0,\quad0<w<w_0,
\]
and, taking $w_0\le\bar w$, these arcs lie on the graph piece where
the density \eqref{eq:angular-density} is bounded above and below. All choices are
independent of the truncation below.

Take smooth functions $0\le\chi_1,\chi_2,\chi_3\le1$ such that
$\chi_1\in C_c^\infty((-w_0,w_0))$ equals one on $[0,w_0/2]$,
$\chi_2$ vanishes on $(-\infty,1]$ and equals one on $[2,\infty)$, and
$\chi_3\in C_c^\infty(\R)$ equals one on $[-C_1,C_1]$.
For $0<\nu<w_0/4$ define, when $0<w<w_0$,
\begin{equation}\label{eq:cusp-test-function}
 f_\nu(u_0+w,y_2)
 =\frac{\chi_1(w)\chi_2(w/\nu)\chi_3((y_2-v_0+w)/w^2)}
        {w\sqrt{\log(1/w)}},
\end{equation}
and extend by zero elsewhere. Each $f_\nu$ is nonnegative and in
$C_c^\infty(\R^2)$, since it vanishes near $w=0$ and $w=w_0$.
Their supports lie in one fixed bounded set. Since the $y_2$-section
of the support has length $\lesssim w^2$, integration first in $y_2$
gives
\[
 \sup_\nu\|f_\nu\|_3^3
 \lesssim\int_0^{w_0}\frac{dw}{w(\log(1/w))^{3/2}}
 =\frac2{\sqrt{\log(1/w_0)}}<\infty.
\]
Let $(t,s)\in\mathcal Q_0$ and $2\nu\le w\le w_0/2$. Then
$\chi_1(w)=1$ and $\chi_2(w/\nu)=1$, and the bound
$|g_{t,s}(w)-(v_0-w)|\le C_1w^2$ gives
$\chi_3((g_{t,s}(w)-v_0+w)/w^2)=1$. Hence
\[
 f_\nu(u_0+w,g_{t,s}(w))=\frac1{w\sqrt{\log(1/w)}}
 \qquad(2\nu\le w\le w_0/2).
\]
We now argue as for \eqref{eq:test-lower-bound}. Since $f_\nu\ge0$, we may discard the
lower half of the ellipse and the part of the upper half outside
$2\nu\le w\le w_0/2$. Since $w_0\le\bar w$, \eqref{eq:third-derivative-bound} shows that
$a=u(t,s)+w$ lies in $(-t,t)$ there, and the density \eqref{eq:angular-density} is at
least $1/(4\pi)$. Therefore
\[
 \begin{aligned}
 \M f_\nu(x(t,s))
 &\ge\frac1{4\pi}\int_{2\nu}^{w_0/2}\frac{dw}{w\sqrt{\log(1/w)}}\\
 &=\frac1{2\pi}\bigl(\sqrt{\log(1/(2\nu))}-\sqrt{\log(2/w_0)}\bigr).
 \end{aligned}
\]
The right-hand side tends to infinity as $\nu\to0$, uniformly for
$(t,s)\in\mathcal Q_0$, that is, uniformly on $\Omega$. Since $\Omega$
has fixed positive area, the $L^{q,\infty}$ norms diverge for every
finite $q$, and the essential supremum diverges when $q=\infty$.
The inputs have one fixed bounded support, so H\"older's inequality
also gives uniform $L^p$ norms for $1\le p\le3$. This proves the
stated failure of weak type and completes the necessity assertion
of Theorem~\ref{thm:main}.

\section{Diagonal estimates for translated circles}\label{sec:translated-diagonal-estimates}
We prove the diagonal estimate needed for Theorem~\ref{thm:translated}
for every fixed $c\in\R^2$, including $c=0$.
We first state the incidence estimate of Pramanik--Yang--Zahl~\cite{PYZ},
which we call the PYZ estimate,
and a maximal estimate for selected curves, together with a
determinant criterion for its hypotheses. We then verify the
hypotheses for the two slices from~\cite[Section~3.1]{CGY}. These slices
suffice when $|c|<1$. When $|c|\ge1$, they may degenerate
simultaneously at finitely many points, the common zeros of
Section~\ref{sec:app-slices}, and neighborhoods of these points require
rescaling (Section~\ref{sec:app-common-zeros}). Frequency localization and summation
give the following estimate. All constants may depend on the fixed
vector $c$. Throughout, $P_\lambda$ and $\M^c_\lambda$ are as in
Section~\ref{sec:frequency-reduction}.

\begin{proposition}\label{prop:translated-pyz-cinematic}
Fix $c\in\R^2$.  For every $3\le r<\infty$, every $\varepsilon>0$, and
every dyadic $\lambda\ge2$,
\begin{equation}\label{eq:app-diagonal-loss}
 \|\M^c_\lambda f\|_r
 \lesssim_{c,r,\varepsilon}\lambda^\varepsilon\|f\|_r.
\end{equation}
\end{proposition}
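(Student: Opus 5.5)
The plan is to prove the estimate first at $r=3$ and then obtain every larger $r$ by interpolation. At $r=3$ we reduce to the incidence (PYZ) estimate for a cinematic-type family of curves, as the section outline indicates.

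For $r>3$, note that the operator $\M^c_\lambda$ is bounded on $L^\infty$ with constant $O(1)$, because the averages have unit mass and the kernel of $P_\lambda$ is uniformly integrable. Linearizing the supremum by a measurable choice $(t(x),s(x))$ over finite subsets of a countable dense parameter set, as in Section~\ref{sec:sufficiency}, and interpolating the $L^3$ bound with the $L^\infty$ bound gives \eqref{eq:app-diagonal-loss} for all $3\le r<\infty$ with the same $\lambda^\varepsilon$ loss. So everything reduces to
\[
\|\M^c_\lambda f\|_3\lesssim_{c,\varepsilon}\lambda^\varepsilon\|f\|_3 .
\]

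\emph{Geometric reduction at $r=3$.} Write $\A^c_{t,s}$ as an average over the curve $x+(tc_1,sc_2)+E_{t,s}$. Cover $\I^2$ by finitely many small parameter patches, and cover the circle by finitely many arcs. After reflections and interchange of coordinates, each arc of each translated ellipse is an upper graph $y_2=x_2+sc_2+s\sqrt{1-(y_1-x_1-tc_1)^2/t^2}$ over an interval away from the vertical tangents. As in the proof of Lemma~\ref{lem:diagonal}, the angular density is smooth and bounded there. Discretizing at scale $\lambda^{-1}$ and using the rapid decay of the kernel of $\A^c_{t,s}P_\lambda$ off the $\lambda^{-1}$-neighborhood of the curve, the local $L^3$ bound follows from the maximal estimate for selected curves. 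That estimate says the $L^3$ norm of the maximal average over $\lambda^{-1}$-tubes around curves $\gamma_{x,t,s}$ is $\lesssim\lambda^\varepsilon$. We deduce it from the PYZ estimate, so the real task is to verify its hypotheses: a nondegeneracy of the map (curve parameters) $\mapsto$ (2-jet, i.e.\ value, slope, curvature at a point), together with a transversality condition. By the determinant criterion just mentioned, each hypothesis reduces to the nonvanishing of an explicit Jacobian. We check this for the two slices of~\cite[Section~3.1]{CGY}, namely the family with $t$ frozen and the family with $s$ frozen (each a three-parameter family of translated graphs). The translation $(tc_1,sc_2)$ is absorbed into $x$, so the relevant derivatives are those of the ellipse graph in $(t,s)$ with an additional $c$-dependent shear.

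\emph{Main obstacle.} The hard step is the degeneracy analysis of these determinants. For $|c|<1$ I expect the determinant for one of the two slices to be nonzero at every point of every graph piece. The plan there is to compute it as a polynomial-type expression in $(\cos\theta,\sin\theta,t,s,c)$ and show that its zero sets for the two slices are disjoint. For $|c|\ge1$ both determinants can vanish at finitely many common zeros, which are isolated points in $(\theta,t,s)$. Near each such point I would first localize to a neighborhood of size $\rho$. Then I would rescale anisotropically (parabolic rescaling of the curve pieces, as in the normalization of the axis caps) so that the rescaled family is uniformly nondegenerate and the PYZ estimate applies with uniform constants. The outcome is a loss of a power of $\rho^{-1}$ times $\lambda^\varepsilon$. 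A dyadic decomposition in $\rho$, combined with the trivial gain from the small measure of the degenerate region, should then sum with an $O(\log\lambda)$ loss. This is absorbed into $\lambda^\varepsilon$.

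Finally, the global estimate follows from the local one by translation invariance and the uniform rapid decay of the kernels away from $B(0,2+2|c|)$. The extension from $(t,s)$ near a reference point to all of $\I^2$ uses the dilation argument from the proof of Lemma~\ref{lem:diagonal}. That argument preserves comparable frequency annuli and only changes the fixed vector $c$ within a compact set, so all constants are uniform.
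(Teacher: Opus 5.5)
Your outer reductions are sound. Interpolating an $L^3$ bound with the trivial $L^\infty$ bound for the linearized operator handles $r>3$; the paper instead proves the selected-curves lemma for all $r\ge3$ and carries $r$ through the summation. Dominating $\A^c_{t,s}P_\lambda$ by a rapidly convergent sum of averages over $2^k\lambda^{-1}$-tubes is equivalent to the paper's vertical reproduction.

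The first concrete error is the choice of slices. Freezing $t$ (or $s$) leaves a family with parameters $(x_1,x_2,s)$, a two-dimensional output, and a supremum over $s$ only. An incidence bound for that family controls $\sup_s$ at each fixed $t$, but not the two-parameter supremum; a Sobolev embedding in $t$ at $L^3$ would cost $\lambda^{1/3}$. The selected-curves lemma needs a one-dimensional output with the supremum taken over both $t$ and $s$. So the slices freeze a \emph{spatial} output coordinate:
\begin{itemize}
\item the first slice freezes the horizontal coordinate $x$, with parameters $(y,t,s)$, output $y$, and graphs $y-sF_t(u)$, where $F_t(u)=c_2+\sqrt{1-(u/t-c_1)^2}$;
\item the second slice freezes $y-x$ after a shear, with parameters $(x,t,s)$ and output $x$.
\end{itemize}
The frozen coordinate is integrated at the end. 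With $z=u/t-c_1$ and $w=\sqrt{1-z^2}$, the $2$-jet determinants are $\frac{s}{t^4w^6}P_{c_1}(z)$ and $-\frac{s}{t^5w^6}\bigl(tP_{c_1}(z)+sQ_{c_2}(z)\bigr)$. Here $P_{c_1}(z)=c_1(3z^2-1)+2z^3$ and $Q_{c_2}(z)=c_2(2-3z^2)+2(1-z^2)^{3/2}$, and a common zero forces $|c|\ge1$. With these slices your plan for $|c|<1$ goes through.

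The more serious gap is at the common zeros.
\begin{itemize}
\item They are not isolated in $(\theta,t,s)$. At a common zero $z_0$ both determinants vanish for \emph{every} $(t,s)\in\I^2$, so localizing the parameters gains nothing. The only small quantity is the distance $h=|z-z_0|$ along the arc.
\item The degeneracy cannot, in general, be rescaled away. The field $V=t(1-3z_0^2)\partial_t+s(2-3z_0^2)\partial_s$ annihilates the parameter $2$-jet at $z=z_0$ for every choice of free output coordinate.
\item A parabolic dilation $(x,y)=(h\widetilde x,h^2\widetilde y)$ works only at an axis vertex, i.e.\ $c=(0,-1)$ up to symmetries. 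There the degenerate point is the origin, which the dilations fix, and the rescaled family in $(a,\beta)=(s/t^2,h^2/t^2)$ has determinant $av^3$ at $\beta=0$.
\end{itemize}

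At a nonaxis common zero the missing idea is a divided-difference construction. Localize to rectangles $I\times T$ of size $\eta_0h$ near $u=\alpha t$, where $\alpha=c_1+z_0$. The identity $\partial_\rho(\partial_u^2F_\rho/\partial_uF_\rho)=-P_{c_1}(z)/(\rho^4w^6(\partial_uF_\rho)^2)$ has right side of fixed sign and size $h$. It gives $|\partial_u^2\Gamma_{t,\tau}(u_*)|\sim h|\tau-t|$ for $\Gamma_{t,\tau}=F_\tau-aF_t-b$, normalized so that $\Gamma_{t,\tau}$ and its first derivative vanish at the center $u_*$ of $I$. From this, the horizontally rescaled graphs $y-sF_t(u_*+hv)$ form a cinematic family with constants independent of $h$. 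However, $y$ is only $h^{-1}$-Lipschitz in the $C^2$ metric, so the lemma must be applied with output $\eta=hy$.

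Only this bookkeeping gives $h^{1-2/r}$ per dyadic piece: the density contributes $h$, and the input and output rescalings contribute $h^{-1/r}$ each. That sums over $h$ for $r\ge3$ with no $\log\lambda$ loss. Your sketch, a loss of a power of $\rho^{-1}$ offset by the small measure of the degenerate region, supplies neither the nondegeneracy mechanism nor the power count. As written, it does not prove the estimate near these points.
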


Only the case $r=3$ is needed for Theorem~\ref{thm:translated}.
Combining it with local smoothing gives the diagonal frequency decay
in Section~\ref{sec:translated-diagonal}.

\subsection{The PYZ estimate and selected curves}

Let $J$ be a fixed compact interval, and consider real-valued functions
$\phi\in C^2(J)$, identified with their graphs
$\{(v,\phi(v)):v\in J\}\subset\R^2$.  Following~\cite[Definition~1.6]{PYZ}, a family $\mathcal F\subset C^2(J)$ is
cinematic with constants $K,D\ge1$ if its $C^2$ diameter is at most $K$,
it is a doubling metric space with doubling constant at most $D$ (every
ball in $\mathcal F$ of radius $r$ is covered by at most $D$ balls of
radius $r/2$), and
\begin{equation}\label{eq:app-cinematic-metric}
 \inf_{v\in J}\bigl(|\phi(v)-\psi(v)|
       +|\phi'(v)-\psi'(v)|+|\phi''(v)-\psi''(v)|\bigr)
 \ge K^{-1}\|\phi-\psi\|_{C^2(J)}
\end{equation}
for all $\phi,\psi\in\mathcal F$.

We use~\cite[Theorem~1.7, $\alpha=\zeta=1$, and (1.9)]{PYZ}.
For every $\varepsilon>0$ and all sufficiently small $\delta>0$,
depending on $K,D,\varepsilon$, a finite set $F\subset\mathcal F$
satisfying
\[
 \#(F\cap B)\le\delta^{-\varepsilon}\rho/\delta
\]
for every ball $B\subset C^2(J)$ of radius $\rho\ge\delta$ obeys
\begin{equation}\label{eq:app-pyz-incidence}
 \Big\|\sum_{\phi\in F}\mathbf1_{\phi^\delta}\Big\|_{3/2}
 \lesssim\delta^{-C\varepsilon}(\delta\#F)^{2/3},
\end{equation}
where $C$ depends only on $D$.
For bounded graph patches this is the special case~\cite[(1.9)]{PYZ}, after a fixed
normalization.  The theorem uses Euclidean neighborhoods of the graphs;
we use the vertical neighborhoods
\[
 \phi^\delta=\{(v,z):v\in J,\ |z-\phi(v)|<\delta\},
\]
which are contained in them, so \eqref{eq:app-pyz-incidence} also holds
with this convention.  The normalizations below have uniformly bounded
$C^2$ norms and fixed graph intervals.

The following estimate is deduced from \eqref{eq:app-pyz-incidence}
by weighted duality in Appendix~\ref{app:selected-curves}.

\begin{lemma}\label{lem:app-selected-curves}
Let $\mathcal F\subset C^2(J)$ be a compact cinematic family with
uniformly bounded $C^2$ norms.  Assign to each curve a real output
coordinate $\eta(\phi)$, ranging over a bounded interval, and suppose
\begin{equation}\label{eq:app-output-projection}
 |\eta(\phi)-\eta(\psi)|\le C\|\phi-\psi\|_{C^2(J)}.
\end{equation}
For $0<\delta<1$, define
\[
 T_\delta g(\eta)=
 \sup_{\substack{\phi\in\mathcal F\\\eta(\phi)=\eta}}
 \frac1{2\delta}\int_J\int_{-\delta}^{\delta}
 |g(v,\phi(v)-z)|\,dz\,dv,
\]
with value zero when the set of curves is empty.  Then, for every
$3\le r<\infty$ and every $\varepsilon>0$,
\begin{equation}\label{eq:app-selected-strong}
 \|T_\delta g\|_{L^r(d\eta)}
 \lesssim_{r,\varepsilon}\delta^{-\varepsilon}\|g\|_r.
\end{equation}
The constants depend only on the cinematic constants, the stated
bounds, $J$, and an upper bound for the length of the output interval.
\end{lemma}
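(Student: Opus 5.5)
The plan is to deduce the lemma from the incidence bound \eqref{eq:app-pyz-incidence} by a discretization and weighted duality argument, proving the endpoint $r=3$ first and then interpolating with the trivial $L^\infty$ bound.

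\emph{Step 1 (discretization).} Since $\mathcal F$ is compact with bounded $C^2$ norms, $T_\delta g(\eta)$ is, up to constants, controlled by the supremum over a maximal $\delta$-separated net $\mathcal N\subset\mathcal F$ in $C^2$. If $\|\phi-\psi\|_{C^2}\le\delta$, then $\phi^\delta\subset\psi^{2\delta}$, so the average over $\phi^\delta$ is bounded by twice the average over $\psi^{2\delta}$. By \eqref{eq:app-output-projection}, $\eta(\psi)$ lies within $C\delta$ of $\eta(\phi)$. Partition the output interval into intervals $Q$ of length $\delta$. Then for $\eta\in Q$,
\[
T_\delta g(\eta)\lesssim \max_{\psi\in\mathcal N_{\tilde Q}}\frac1\delta\int_{\psi^{2\delta}}|g|,
\]
where $\mathcal N_{\tilde Q}$ consists of the net elements with $\eta(\psi)$ in the $C\delta$-enlargement $\tilde Q$ of $Q$. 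For each $Q$, select one maximizing curve $\psi_Q$. It then suffices to bound
\[
\Big(\delta\sum_Q\Big(\delta^{-1}\int_{\psi_Q^{2\delta}}|g|\Big)^3\Big)^{1/3}\lesssim\delta^{-\varepsilon}\|g\|_3 .
\]

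\emph{Step 2 (duality).} Take weights $a_Q\ge0$ with $\delta\sum_Q a_Q^{3/2}=1$. The quantity to bound is
\[
\sum_Q a_Q\int_{\psi_Q^{2\delta}}|g|\le\|g\|_3\,\Big\|\sum_Q a_Q\mathbf 1_{\psi_Q^{2\delta}}\Big\|_{3/2}.
\]
Pigeonhole the weights dyadically, $a_Q\sim 2^k$. Weights below $\delta^{10}$ are harmless, so only $O(\log(1/\delta))$ classes remain, and these cost only $\delta^{-\varepsilon}$. On a class $F_k$, the curves $\psi_Q$ have distinct, $\delta$-separated outputs up to bounded multiplicity. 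Hence any $C^2$ ball of radius $\rho$ contains $\lesssim\rho/\delta$ of them, because \eqref{eq:app-output-projection} confines their outputs to an interval of length $C\rho$. This is exactly the non-concentration hypothesis of \eqref{eq:app-pyz-incidence}, after splitting into $O(1)$ subfamilies with separated outputs.

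\emph{Step 3 (applying the incidence bound).} Applying \eqref{eq:app-pyz-incidence} with $2\delta$ gives
\[
2^k\Big\|\sum_{F_k}\mathbf 1\Big\|_{3/2}\lesssim\delta^{-C\varepsilon}\,2^k(\delta\#F_k)^{2/3}.
\]
Since $\delta\,\#F_k\,2^{3k/2}\le1$, the right side is at most $\delta^{-C\varepsilon}$. Summing over $k$ gives the $r=3$ case. The trivial bound $T_\delta g\lesssim\|g\|_\infty$ holds because the normalized average over a $\delta$-neighborhood of a curve with bounded length has bounded mass. Marcinkiewicz or complex interpolation for the sublinear $T_\delta$ then gives every $3\le r<\infty$ with $\delta^{-\varepsilon}$; one may linearize via the selection $\eta\mapsto\phi$.

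\emph{Main obstacle.} I expect the hard part to be checking the non-concentration hypothesis carefully. The net curves that share an output interval are not controlled by \eqref{eq:app-output-projection}, which only bounds $\eta$ from above by the $C^2$ distance. This is why exactly one curve must be selected per output $\delta$-interval before invoking PYZ, and why we need $\delta$ small depending on the cinematic constants. Larger $\delta$ is handled trivially.
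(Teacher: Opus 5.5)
Your argument is correct, but it discretizes differently from the paper.

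\textbf{The paper's route.} The paper keeps the exact $\delta$-tubes and the original curves. It uses a measurable selection theorem to choose a near-maximizing curve $\phi_\eta$ for every output $\eta$. It then splits the output axis into shifted lattices $a+\delta\mathbb Z$, $0\le a<\delta$; on each lattice the selected curves are distinct, and \eqref{eq:app-output-projection} gives the nonconcentration directly. Abel summation over decreasingly ordered weights passes from the unweighted bound \eqref{eq:app-pyz-incidence} to weighted sums, at a cost of $(\log N)^{1/3}$. Integrating over the shift $a$ gives the continuous weighted inequality \eqref{eq:app-continuous-incidence}, which is then dualized. A separate approximation step extends the estimate from continuous compactly supported $g$ to $L^3$.

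\textbf{Your route.} You replace $\mathcal F$ by a finite $\delta$-net and pay by doubling the tube width. This bounds $T_\delta g$ pointwise by a step function on output intervals of length $\delta$. As a result, only finitely many curves and a finite dual pairing ever appear. You handle the weights by dyadic pigeonholing instead of Abel summation.

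\textbf{What each buys.} Your argument is entirely finite: it needs no measurable selection theorem, no continuity of tube averages in $\phi$, and no approximation of $g$. The price is the harmless use of \eqref{eq:app-pyz-incidence} at scale $2\delta$, plus some multiplicity bookkeeping. The paper's route instead yields the intermediate weighted inequality \eqref{eq:app-continuous-incidence} for arbitrary measurable selections with the original tubes.

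\textbf{A small imprecision.} The outputs $\eta(\psi_Q)\in\tilde Q$ are not literally $\delta$-separated. Moreover, the same net curve can be chosen for several overlapping enlargements $\tilde Q$. What you actually use is weaker and true: at most $O(L/\delta+1)$ of the $\psi_Q$ have outputs in any interval of length $L$. Splitting the intervals $Q$ into $O(C+1)$ classes with pairwise disjoint enlargements makes the selected curves distinct within each class, which is the form in which \eqref{eq:app-pyz-incidence} is stated.
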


The output measure in \eqref{eq:app-selected-strong} is unnormalized;
the estimate remains uniform when the output interval is shorter
than $\delta$. In the proof, given in Appendix~\ref{app:selected-curves},
the Lipschitz condition \eqref{eq:app-output-projection} gives the
ball-count hypothesis preceding \eqref{eq:app-pyz-incidence} for curves
selected at $\delta$-separated output values.

In our applications the hypotheses of Lemma~\ref{lem:app-selected-curves}
are verified by the following lemma, which reduces them to a
determinant condition.

\begin{lemma}[Nondegenerate parameter families]\label{lem:app-nondegenerate}
Let $J\subset\R$ be a compact interval, let $B\subset\R^3$ be a compact
convex set, and let $(u,p)\mapsto\phi_p(u)$ be a $C^4$ function on a
neighborhood of $J\times B$, with all derivatives of order at most $4$
bounded by $A$. Suppose that the parameter $2$-jet map
\[
 \mathcal J(u,p)=\bigl(\phi_p(u),\partial_u\phi_p(u),\partial_u^2\phi_p(u)\bigr)
\]
satisfies $|\det D_p\mathcal J(u,p)|\ge c_0>0$ on $J\times B$. Then
$J\times B$ is the union of finitely many products $J'\times B'$, where
$J'$ is a closed interval and $B'=B\cap Q$ for a closed box
$Q\subset\R^3$, such that, with
$\mathcal F=\{\phi_p|_{J'}:p\in B'\}$, the following hold on each piece.
\begin{enumerate}\addtolength{\itemindent}{-8pt}\addtolength{\leftskip}{-8pt}
\renewcommand{\theenumi}{\textbf{\alph{enumi}}}\renewcommand{\labelenumi}{(\theenumi)}
\item\label{lem:nd-a} $c|p-q|\le\|\phi_p-\phi_q\|_{C^2(J')}\le C|p-q|$ for $p,q\in B'$.
 In particular, $p\mapsto\phi_p|_{J'}$ is injective, and $\mathcal F$ is
 a compact doubling subset of $C^2(J')$ of diameter at most $C$.
\item\label{lem:nd-b} \eqref{eq:app-cinematic-metric} holds for all $\phi,\psi\in\mathcal F$,
 with $J'$ in place of $J$ and $K=C$. Thus $\mathcal F$ is a cinematic
 family.
\item\label{lem:nd-c} If $\eta$ is a function on $B'$ with $|\eta(p)-\eta(q)|\le L|p-q|$,
 then $\eta(\phi_p):=\eta(p)$ satisfies \eqref{eq:app-output-projection}
 with the constant $CL$.
\end{enumerate}
The number of pieces and the constants $c,C$ depend only on $A$, $c_0$,
and the diameters of $J$ and $B$.
\end{lemma}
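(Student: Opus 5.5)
The plan is to derive all three assertions from a single two-sided comparison between parameters and $2$-jets, obtained by an inverse-function-type argument on small boxes.

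\emph{Step 1: local jet comparison.} The map $p\mapsto\mathcal J(u,p)$ has $|\det D_p\mathcal J|\ge c_0$ and $\|D_p\mathcal J\|\le A$. By Cramer's rule, $\|(D_p\mathcal J)^{-1}\|\le C(A)/c_0$. Second derivatives of $\mathcal J$ involve at most four derivatives of $\phi$, so they are bounded by $A$, and $D_p\mathcal J$ is $A$-Lipschitz in $(u,p)$.

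Choose $\eta=\eta(A,c_0)$ small and cover $J\times B$ by closed intervals $J'$ of length at most $\eta$ and boxes $Q$ of diameter at most $\eta$. Put $B'=B\cap Q$. The number of pieces is bounded in terms of $\eta$ and the diameters of $J$ and $B$.

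On such a piece, fix $(u_*,p_*)$ with $p_*\in B'$ and set $M_0=D_p\mathcal J(u_*,p_*)$, so that $|M_0\omega|\ge c_1|\omega|$. For $p,q\in B'$ (convex) and any $u\in J'$,
\[
 \mathcal J(u,p)-\mathcal J(u,q)=\int_0^1D_p\mathcal J(u,q+\tau(p-q))\,d\tau\,(p-q),
\]
and the averaged matrix lies within $2A\eta\le c_1/2$ of $M_0$. This gives the uniform two-sided bound
\[
 \tfrac{c_1}{2}|p-q|\le|\mathcal J(u,p)-\mathcal J(u,q)|\le C|p-q|\qquad(u\in J').
\]

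\emph{Step 2: deduce the three assertions.} Taking the supremum over $u\in J'$ in the upper bound, and using $|\mathcal J|$ at any single $u$ in the lower bound, gives (a). Doubling follows from (a): $\mathcal F$ is bi-Lipschitz to a subset of $\R^3$. Compactness follows from continuity of $p\mapsto\phi_p|_{J'}$ into $C^2$, since $B'$ is compact. Injectivity is immediate from the lower bound.

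For (b), the left side of \eqref{eq:app-cinematic-metric} is comparable to $\inf_{u\in J'}|\mathcal J(u,p)-\mathcal J(u,q)|$. By Step 1 this is at least $(c_1/2)|p-q|$, which by (a) is at least $C^{-1}\|\phi_p-\phi_q\|_{C^2(J')}$. The $C^2$ diameter bound also comes from (a).

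For (c), $|\eta(p)-\eta(q)|\le L|p-q|\le Lc^{-1}\|\phi_p-\phi_q\|_{C^2(J')}$.

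\emph{Main obstacle.} The only delicate point is the inverse-function-type lower bound in Step 1. It must hold uniformly in $u$, which is what \eqref{eq:app-cinematic-metric} requires, rather than only for the infimum being attained at one point. Freezing the matrix at a single point for both $u$ and $p$, with pieces small relative to $c_1/A$, handles this; convexity of $B'$ is used so that the segment between $p$ and $q$ stays in the piece.
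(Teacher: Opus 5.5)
Your proposal is correct and follows essentially the same route as the paper: you freeze $D_p\mathcal J$ at one point of each small piece, bound its inverse via Cramer's rule, and use convexity of $B'=B\cap Q$ with the mean value theorem to get a two-sided bound on $\mathcal J(u,p)-\mathcal J(u,q)$ that is uniform in $u\in J'$, from which (a)--(c) follow. The paper likewise obtains compactness and the doubling property from the bi-Lipschitz equivalence between $B'$ and $\mathcal F$.
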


\begin{proof}
Subdivide $J$ into closed intervals $J'$ and intersect $B$ with a grid
of closed boxes $Q$, with the interval lengths and box diameters to be
chosen. For each nonempty piece $J'\times B'$, where $B'=B\cap Q$,
choose $(u_0,p_0)\in J'\times B'$. The entries of
$D_p\mathcal J$ are bounded by $A$ and its determinant is at least
$c_0$ in absolute value, so $M_0=D_p\mathcal J(u_0,p_0)$ satisfies
$\|M_0^{-1}\|\le C(A,c_0)$ by the adjugate formula. The derivatives of
$D_p\mathcal J$ are bounded by $A$, so if the size of the piece is at
most $c(A,c_0)$, then
$\|D_p\mathcal J(u,p)-M_0\|\le(2\|M_0^{-1}\|)^{-1}$ on $J'\times B'$.
Since $B'$ is convex, the mean value theorem gives, for $u\in J'$ and
$p,q\in B'$,
\begin{equation}\label{eq:app-jet-bilipschitz}
 (2\|M_0^{-1}\|)^{-1}|p-q|\le|\mathcal J(u,p)-\mathcal J(u,q)|\le C|p-q|.
\end{equation}

For \eqref{lem:nd-a}, the mean value theorem in $p$ gives
$\|\phi_p-\phi_q\|_{C^2(J')}\le CA|p-q|$, while
$|\mathcal J(u,p)-\mathcal J(u,q)|\le\sqrt3\,\|\phi_p-\phi_q\|_{C^2(J')}$
and the lower bound in \eqref{eq:app-jet-bilipschitz} give
$c|p-q|\le\|\phi_p-\phi_q\|_{C^2(J')}$. Hence $p\mapsto\phi_p|_{J'}$ is
injective and bi-Lipschitz from $B'$ onto $\mathcal F$, so $\mathcal F$
is compact, doubling with a constant depending only on $A$ and $c_0$,
and of diameter at most $CA\operatorname{diam}B$.

Assertion \eqref{lem:nd-b} follows from the lower bound in
\eqref{eq:app-jet-bilipschitz} and \eqref{lem:nd-a} since,  for every
$u\in J'$,
\[
 |\phi_p-\phi_q|(u)+|\partial_u(\phi_p-\phi_q)|(u)
 +|\partial_u^2(\phi_p-\phi_q)|(u)
 \ge|\mathcal J(u,p)-\mathcal J(u,q)|.
\]

For \eqref{lem:nd-c}, assertion \eqref{lem:nd-a} gives
$|\eta(\phi_p)-\eta(\phi_q)|\le L|p-q|\le CL\|\phi_p-\phi_q\|_{C^2(J')}$,
which is \eqref{eq:app-output-projection}.
\end{proof}

\subsection{Two slices and the case \texorpdfstring{$|c|<1$}{|c|<1}}
\label{sec:app-slices}
For $c=0$, Chen--Guo--Yang~\cite[Section~3.1]{CGY} used two
slices to verify the nondegeneracy required for applying the PYZ
estimate. We use the same slices for general $c$ and verify the
hypotheses of Lemma~\ref{lem:app-selected-curves} on regular arcs,
that is, arcs separated from the common zeros defined below, where
both parameter $2$-jet determinants vanish. There are no common zeros
when $|c|<1$.

\subsubsection*{The family of curves}
Consider an upper graph patch of the curve
\[
 \{(t(c_1+\cos\theta),s(c_2+\sin\theta)):\theta\in\R\},
 \qquad (t,s)\in\I^2.
\]
Write
\begin{equation}\label{eq:app-graph}
 F_t(u)=c_2+\sqrt{1-(u/t-c_1)^2},
\end{equation}
and put $z=u/t-c_1$ and $w=\sqrt{1-z^2}$.
We work on a compact subinterval of $|z|<1$.  Reflections and interchange
of the coordinate axes give finitely many such patches covering the
circle.  The averaging density on this patch is
\[
 \frac{du}{2\pi t\sqrt{1-(u/t-c_1)^2}},
\]
which is smooth and uniformly bounded on the patch.

\subsubsection*{The first slice}
First fix the horizontal output coordinate $x$.  With the displacement
$u$ as the horizontal integration variable, the input is evaluated at
$(x-u,y-sF_t(u))$.  The graph family for the remaining output coordinate is
\[
 \phi_1(u;y,t,s)=y-sF_t(u).
\]
The parameter $2$-jet map sends $(y,t,s)$ to the value and first two
$u$-derivatives of the graph.  Its determinant is
\begin{equation}\label{eq:app-first-slicing}
 \det D_{(y,t,s)}(\phi_1,\partial_u\phi_1,\partial_u^2\phi_1)
 =\frac{s}{t^4w^6}P_{c_1}(z),
 \end{equation}
 where
 \[  P_{c_1}(z)=c_1(3z^2-1)+2z^3.\]
Indeed, let $G=-sF_t$ and $\mathbf G=(G_0,G_1,G_2)$ with $G_k=\partial_u^kG$.
With primes denoting $z$-derivatives,
\[
 \mathbf G=-s\bigl(c_2+w,\ t^{-1}w',\ t^{-2}w''\bigr).
\]
Then $(\phi_1,\partial_u\phi_1,\partial_u^2\phi_1)=(y,0,0)+\mathbf G$, so expanding
along the $y$-column $(1,0,0)^{\mathsf T}$ gives
\[
 \det D_{(y,t,s)}(\phi_1,\partial_u\phi_1,\partial_u^2\phi_1)
 =\partial_tG_1\,\partial_sG_2-\partial_sG_1\,\partial_tG_2.
\]
Since $\partial_tz=-(z+c_1)/t$,
\[
 \partial_tG_k=st^{-k-1}\bigl(kw^{(k)}+(z+c_1)w^{(k+1)}\bigr)\quad(k=0,1,2),
\]
while $\partial_s\mathbf G=\mathbf G/s$.
Hence this determinant equals
$(s/t^4)\bigl[w'w''+(z+c_1)(w'w'''-(w'')^2)\bigr]$. Inserting
$w'=-z/w$, $w''=-1/w^3$, and $w'''=-3z/w^5$ gives
\[
 \frac{s}{t^4w^6}\bigl[z(1-z^2)+(z+c_1)(3z^2-1)\bigr]
 =\frac{s}{t^4w^6}P_{c_1}(z).
\]

\subsubsection*{The second slice}
For the second slice, fix $y-x$ and use the input horizontal coordinate
$u$, so that the displacement is $\widetilde u=x-u$.  Up to a fixed
vertical translation, the graph family is
$\phi_2(u;x,t,s)=x-sF_t(\widetilde u)$, with output coordinate $x$;
this is the spatial shear used in~\cite[(3.10)--(3.13)]{CGY}. In this
slice, $z=\widetilde u/t-c_1$ and $w=\sqrt{1-z^2}$, as defined after
\eqref{eq:app-graph} with $u$ replaced by $\widetilde u$. Its
determinant is
\begin{equation}\label{eq:app-second-slicing}
 \det D_{(x,t,s)}(\phi_2,\partial_u\phi_2,\partial_u^2\phi_2)
 =-\frac{s}{t^5w^6}\bigl(tP_{c_1}(z)+sQ_{c_2}(z)\bigr), 
 \end{equation}
 where 
 \[
 Q_{c_2}(z)=c_2(2-3z^2)+2(1-z^2)^{3/2}.
\]
Indeed, define $G$ and $\mathbf G$ as above with $u$ replaced by $\widetilde u$,
so that $G_k=\partial_{\widetilde u}^kG$. Then
$(\phi_2,\partial_u\phi_2,\partial_u^2\phi_2)=(x+G_0,-G_1,G_2)$. At fixed
$u$, $\partial_x$ acts on $G$ as $\partial_{\widetilde u}$, while
$\partial_t$ and $\partial_s$ act at fixed $\widetilde u$. Changing the
sign of the middle row gives
\[
 \det D_{(x,t,s)}(\phi_2,\partial_u\phi_2,\partial_u^2\phi_2)
 =-\det\bigl(e_1+\partial_{\widetilde u}\mathbf G,\partial_t\mathbf G,\partial_s\mathbf G\bigr),
\]
where $e_1=(1,0,0)^{\mathsf T}$.
The part containing $e_1$ is the determinant \eqref{eq:app-first-slicing},
namely $s P_{c_1}(z)/(t^4w^6)$. In the remaining part, adding
$(z+c_1)\partial_{\widetilde u}\mathbf G$ to $\partial_t\mathbf G$ gives
$(s/t)(0,w'/t,2w''/t^2)^{\mathsf T}$, while
$\partial_{\widetilde u}\mathbf G=-(s/t)(w',w''/t,w'''/t^2)^{\mathsf T}$ and
$\partial_s\mathbf G=-(c_2+w,w'/t,w''/t^2)^{\mathsf T}$. Hence
\[
 \det\bigl(\partial_{\widetilde u}\mathbf G,\partial_t\mathbf G,\partial_s\mathbf G\bigr)
 =\frac{s^2}{t^5}
 \det\begin{pmatrix}
 w'&0&c_2+w\\ w''&w'&w'\\ w'''&2w''&w''
 \end{pmatrix}
 =\frac{s^2}{t^5w^6}Q_{c_2}(z).
\]
Adding the two parts and using the sign in the previous display gives
\eqref{eq:app-second-slicing}.

The factor $P_{c_1}$ determines
where the first parameter $2$-jet map is nondegenerate.  At a zero of
$P_{c_1}$, the second determinant reduces to
$-s^2Q_{c_2}(z)/(t^5w^6)$.  Thus $Q_{c_2}$ determines whether the
second parameter $2$-jet map is nondegenerate where the first one fails.
Nonvanishing of either determinant gives the cinematic condition
\eqref{eq:app-cinematic-metric} after localization, as verified below.

\subsubsection*{Common zeros}
For fixed $c$, a \emph{common zero} means a point $z_0\in(-1,1)$ such
that $P_{c_1}(z_0)=Q_{c_2}(z_0)=0$.  At the corresponding point of
$c+\mathbb S^1$, both parameter $2$-jet determinants vanish for every
$(t,s)\in\I^2$.  We treat neighborhoods of these common zeros by
rescaling in Section~\ref{sec:app-common-zeros}.

At a common zero $z_0$, we have $z_0^2\notin\{1/3,2/3\}$.
Solving the two equations gives
\begin{equation}\label{eq:app-common-zero-center}
 (c_1,c_2)=\Bigl(\frac{2z_0^3}{1-3z_0^2},\ -\frac{2(1-z_0^2)^{3/2}}{2-3z_0^2}\Bigr).
\end{equation}
In particular,
\begin{equation}\label{eq:app-common-zero}
 |c|^2-1=
 \frac{3z_0^4(1-z_0^2)^2}
 {(1-3z_0^2)^2(2-3z_0^2)^2}\ge0.
\end{equation}
Thus there are no common zeros when $|c|<1$.  For an arbitrary fixed
$c$ there is at most one. Indeed, the map $z_0\mapsto(c_1,c_2)$ in
\eqref{eq:app-common-zero-center} is injective on $(-1,1)$: $c_1$
is odd and increasing on each of the three intervals separated by
$\pm1/\sqrt3$, while $c_2$ is even and takes values in
$(-2(2/3)^{3/2},-1]$ for $|z_0|<1/\sqrt3$ and outside this interval
for $|z_0|>1/\sqrt3$. Moreover, $c_1\in(-\infty,-1)$ on $(1/\sqrt3,1)$
and $c_1\in(1,\infty)$ on $(-1,-1/\sqrt3)$.

\subsubsection*{Regular arcs}
On a regular arc, that is, an arc separated from the common zeros, one
of the determinants \eqref{eq:app-first-slicing},
\eqref{eq:app-second-slicing} is bounded away from zero, and we use the
corresponding slice, with parameters $p=(y,t,s)$ or $p=(x,t,s)$ and
with output coordinate the first entry of $p$. A translation of the
output coordinate changes the graph only by a vertical translation,
together with the horizontal translation $u\mapsto u-x_0$ in the second
slice; we may therefore localize the output coordinate to unit intervals
centered at zero, so that $p$ ranges over a compact convex set. Cover
the arc by finitely many products $J\times B$ of a $u$-interval and a
parameter box on which $w$ is bounded below. On each of them
$(u,p)\mapsto\phi_p(u)$ is smooth with bounded derivatives, so
Lemma~\ref{lem:app-nondegenerate} applies, with $\eta$ the first entry
of $p$ and $L=1$. Lemma~\ref{lem:app-selected-curves} then applies to
each resulting piece, and the fixed output coordinate is subsequently
integrated, as in~\cite[Section~3.1]{CGY}.

\subsubsection*{The case \texorpdfstring{$|c|<1$}{|c|<1}}
In the case $|c|<1$, \eqref{eq:app-common-zero} excludes all common
zeros.  Compactness gives finitely many of the regular patches just
described, and hence the required estimates for thickened curves on
the whole circle.  This includes $c=0$: then $P_0(z)=2z^3$ and
$Q_0(z)=2(1-z^2)^{3/2}$ have no common zero on $|z|<1$.
The passage to frequency-localized averages is given in
Section~\ref{sec:app-frequency-sum}.

\subsection{The case \texorpdfstring{$|c|\ge1$}{|c|>=1}}
\label{sec:app-common-zeros}
For $|c|\ge1$, Section~\ref{sec:app-slices} treats the regular arcs.
Near any common zeros, we decompose the arc into dyadic pieces
according to the distance $h$ to the zero, rescale each piece, and
verify the hypotheses of Lemma~\ref{lem:app-selected-curves} for the
resulting family of each piece, with constants independent of $h$. The
cutoffs defining the pieces are fixed, and the estimates for the pieces
are summed, in Section~\ref{sec:app-frequency-sum}.

\subsubsection*{Persistence of the degeneracy}
Changing the scalar output coordinate does not remove the degeneracy at
a common zero. Indeed, at a common zero $z_0$, substitution of
\eqref{eq:app-common-zero-center} shows that the vector field
$V=t(1-3z_0^2)\partial_t+s(2-3z_0^2)\partial_s$ in the $(t,s)$-plane
satisfies
\[
 \left.
 V(\phi_1,\partial_u\phi_1,\partial_u^2\phi_1)
 \right|_{u=(c_1+z_0)t}=0,
\]
where the parameter derivatives are taken at fixed $u$ before
evaluation. Since $V$ has no $y$-component, the same vanishing holds
for the graphs $y-sF_t(x-v)$ of the second slice at fixed $v$, which
differ from $\phi_1$ by a change of the output coordinate only. Thus
$V$ is a nonzero vector in the kernel of the parameter $2$-jet map for
every choice of the free output coordinate, and the $2$-jet map
restricted to any level set of that coordinate remains singular.

At an axis vertex we therefore use an anisotropic dilation, and at
other common zeros we construct a uniformly nondegenerate basis from
divided differences of graphs.

\subsubsection*{Axis vertices}
Since $P_{c_1}(0)=-c_1$ and $Q_{c_2}(0)=2(c_2+1)$, the common zero
$z_0=0$ forces $c=(0,-1)$; then the upper vertex of $c+\mathbb S^1$ is
the origin, which is fixed by the axis dilations. The other axis cases,
in which another vertex of $c+\mathbb S^1$ is the origin, follow by
reflection or interchange of coordinates.  Here the graph is
\[
 s\bigl(-1+\sqrt{1-u^2/t^2}\bigr).
\]
Decompose a neighborhood of the vertex into smooth dyadic pieces
$|u|\sim h$.  Under the spatial dilation
$(x,y)=(h\widetilde x,h^2\widetilde y)$, write $u=hv$ and put
\[
 (a,\beta)=(s/t^2,\ h^2/t^2).
\]
The rescaled graph and density are
\begin{equation}\label{eq:app-axis-rescaling}
 -\frac{a v^2}{1+\sqrt{1-\beta v^2}},\qquad
 \frac{h}{2\pi t}(1-\beta v^2)^{-1/2}\,dv,
 \qquad |v|\sim1.
\end{equation}
The parameter $a$ stays in a fixed compact subinterval of $(0,\infty)$,
and $\beta\sim h^2$.  For the original parameters $t\in[1,2]$, the
density in \eqref{eq:app-axis-rescaling} is bounded by $Ch\,dv$.
By positivity, we first replace the density by this bound and then
enlarge the range of $\beta$ to a fixed interval $[-\bar\beta,\bar\beta]$,
where $\bar\beta>0$ is small.
The rescaled graph extends smoothly to this larger parameter set.

For the sliced family
\[
 \phi(v;\widetilde y,a,\beta)
 =\widetilde y+\frac{a v^2}{1+\sqrt{1-\beta v^2}},
\]
the $2$-jet determinant at $\beta=0$ is $a v^3$.  Indeed, the derivatives
with respect to $a$ and $\beta$ at $\beta=0$ are $v^2/2$ and $a v^4/8$,
and the $\widetilde y$-column of the jet matrix is $(1,0,0)^{\mathsf T}$,
so the determinant is
\[
 \det\begin{pmatrix} v & a v^3/2\\ 1 & 3a v^2/2\end{pmatrix}=a v^3.
\]
Taking the fixed $\beta$-range small makes this determinant uniformly
nonzero for $|v|\sim1$. The family is smooth with bounded derivatives
on the fixed parameter set. After a fixed finite subdivision of the
graph interval and parameter set, Lemma~\ref{lem:app-nondegenerate},
with output $\widetilde y$, verifies the hypotheses of
Lemma~\ref{lem:app-selected-curves} on each piece, uniformly in $h$.

\subsubsection*{Common zeros away from the axes}
Suppose $0<|z_0|<1$, and put 
\[ w_0=\sqrt{1-z_0^2}, \quad  \alpha=c_1+z_0.\]
  Inserting \eqref{eq:app-common-zero-center}, namely
$\alpha=z_0+2z_0^3/(1-3z_0^2)$, $c_2+w_0=w_0-2w_0^3/(2-3z_0^2)$, and
$P_{c_1}'(z_0)=6z_0(c_1+z_0)=6z_0\alpha$, we obtain
\begin{equation}\label{eq:app-critical-data}
 \begin{aligned}
 \alpha&=\frac{z_0w_0^2}{1-3z_0^2}\ne0,\\
 F_t(\alpha t)&=c_2+w_0=-\frac{w_0z_0^2}{2-3z_0^2}\ne0,\\
 P_{c_1}'(z_0)&=\frac{6z_0^2w_0^2}{1-3z_0^2}\ne0.
 \end{aligned}
\end{equation}
Also $\partial_uF_t(\alpha t)=-z_0/(tw_0)\ne0$.

Decompose the arc into signed dyadic pieces
$|u/t-c_1-z_0|\sim h$, equivalently $|u-\alpha t|\sim h$.
Divide the bounded $u$-range into intervals of length $\eta_0h$ with
bounded overlap, where $\eta_0>0$ is fixed and small.  Since
$\alpha\ne0$, each interval receives parameters in a $t$-interval of
length $O(h)$.  A bounded number of subdivisions, independent of $h$,
gives rectangles on whose slight enlargements $u-\alpha t$ has one
sign and magnitude comparable to $h$.

In the following lemma the implicit constants in $|u-\alpha t|\sim h$
are those of the dyadic decomposition above, and all derivatives and
norms in $v$ are taken on $J_0=[-\eta_0/2,\eta_0/2]$.

\begin{lemma}[Graphs near a nonaxis common zero]\label{lem:app-degenerate-graphs}
Let $z_0$ be a common zero with $0<|z_0|<1$, and let $\alpha=c_1+z_0$
be as in \eqref{eq:app-critical-data}, so that $u=\alpha t$ corresponds
to $z=z_0$. There are constants $\eta_0\in(0,1)$ and $h_0>0$, depending
only on $c$ and $z_0$, with the following property. Let $0<h\le h_0$,
let $I$ be an interval of length $\eta_0h$ with center $u_*$, and let
$T\subset\I$ be a closed interval of length at most $\eta_0h$ such that
$u-\alpha t$ has one sign and $|u-\alpha t|\sim h$ on $I\times T$.
For $p=(y,t,s)$ in $\mathcal P=[-\tfrac12,\tfrac12]\times T\times\I$,
define
\[
 G_p(v)=y-sF_t(u_*+hv),\qquad v\in J_0.
\]
Then $\mathcal F_{I,T}=\{G_p:p\in\mathcal P\}$ is a compact cinematic
family in $C^2(J_0)$ whose cinematic constant $K$ and doubling constant
$D$ depend only on $c$ and $z_0$. Moreover, for $p=(y,t,s)$ and
$p'=(y',\tau,s')$ in $\mathcal P$,
\begin{equation}\label{eq:app-degenerate-output}
 |y-y'|\le Ch^{-1}\|G_p-G_{p'}\|_{C^2(J_0)},
\end{equation}
with $C$ depending only on $c$ and $z_0$. In particular, all constants
are independent of $h$, $I$, and $T$.
\end{lemma}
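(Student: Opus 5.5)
The plan is to reduce everything to Lemma~\ref{lem:app-nondegenerate}, applied not to the original parameters $p=(y,t,s)$ but to a rescaled parameter $q=q_h(p)$. The goal is that in the $q$ variables the family $(v,q)\mapsto G_{p(q)}(v)$ has $v$-jet map with derivatives bounded independently of $h$ and determinant bounded below independently of $h$. Parts \eqref{lem:nd-a}--\eqref{lem:nd-b} of that lemma then give the cinematic and doubling constants, depending only on $c,z_0$. The bound \eqref{eq:app-degenerate-output} will follow from the bi-Lipschitz estimate \eqref{lem:nd-a} in the $q$ variables once we check that $|y-y'|\lesssim h^{-1}|q-q'|$.

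First I compute the size of the obstruction. In $v$ we have $\partial_v^kG_p=-sh^kF_t^{(k)}(u_*+hv)$ for $k\ge1$. Hence $D_{(y,t,s)}$ of the $v$-jet $(G_p,\partial_vG_p,\partial_v^2G_p)$ equals $\diag(1,h,h^2)$ times the matrix in \eqref{eq:app-first-slicing}. Its determinant is therefore $h^3\,s\,P_{c_1}(z)/(t^4w^6)$. Since $P_{c_1}(z_0)=0$ and $P'_{c_1}(z_0)\ne0$ by \eqref{eq:app-critical-data}, and $|z-z_0|=|u-\alpha t|/t\sim h$ on $I\times T$, this determinant is comparable to $h^4$. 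The naive jet determinant is thus too small by exactly $h^4$, and the rescaling must absorb this. The structure of the degeneracy is given by the vector field $V$ of the preceding paragraph, which kills the jet at $z=z_0$; away from $z_0$, $V$ produces jet changes of size $O(h)$ relative to the other directions. Since $T$ has length $\le\eta_0 h$, the $t$-variable itself is already localized at scale $h$. I would therefore choose parameters
\[
 q=\Bigl(y-sF_t(u_*),\ \ h^{-1}\,\ell_1(t,s),\ \ h^{-2}\,\ell_2(t,s)\Bigr),
\]
where $(\ell_1,\ell_2)$ are local coordinates in the $(t,s)$-plane with $\ell_2$ vanishing along $V$ to first order. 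An alternative, following the hint in the text, is to replace these by divided differences of $G_p$ at three fixed points of $J_0$. The powers are to be chosen so that the total Jacobian factor is $h^{-3}$ in the $(t,s)$ block and the value coordinate absorbs the $y$ direction. Note that the third-order term $-sh^3F_t'''/6\cdot v^3$ has the same size as the surviving variation in the $V$-direction, so it cannot be discarded. The exact identity $F_t(\alpha t)\ne0$ together with $\partial_uF_t(\alpha t)\ne0$ from \eqref{eq:app-critical-data} should guarantee that the $s$-direction and the $t$-direction are separated at the first-order level.

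The main step, and the expected obstacle, is verifying that $D_q$ of the $v$-jet is uniformly invertible for all $v\in J_0$ and all admissible $h$, with all derivatives of order $\le4$ bounded. I would Taylor expand $F_t(u_*+hv)$ about $z_0$ in both $h v$ and $(u_*-\alpha t)$, which is $O(h)$. In the rescaled variables the jet becomes a smooth function of $(v,q,h,\kappa)$ with $\kappa=(u_*-\alpha t)/h$ ranging over a compact set bounded away from $0$ of one sign. I then compute the limiting determinant at $h=0$. It should be a nonzero multiple of $P'_{c_1}(z_0)\,\kappa$, or more precisely $P'_{c_1}(z_0)(\kappa+v)$, which is nonzero on $J_0$ once $\eta_0$ is small compared with $|\kappa|$. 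This explains the restriction to intervals of length $\eta_0h$ and the one-sign condition. Taking $h_0$ small then keeps the determinant bounded below, and compactness in $(\kappa,v,q)$ gives the uniform bounds. If the limiting determinant turns out to involve $Q_{c_2}$ or to vanish identically, I would instead use the second slice, with the shear $x$, as the scalar output coordinate, relying on the relations \eqref{eq:app-common-zero-center}. Finally, since $y$ is recovered from $q_0$ and $(t,s)$ with $|\Delta(t,s)|\lesssim h|\Delta q|$, the bound \eqref{eq:app-degenerate-output} follows, with the factor $h^{-1}$ coming from $q_1=h^{-1}\ell_1$.
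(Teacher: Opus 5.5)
\textbf{There is a genuine gap.} The reduction to Lemma~\ref{lem:app-nondegenerate} fails on two counts.

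\emph{The rescaling goes the wrong way.} With $q_1=h^{-1}\ell_1$ and $q_2=h^{-2}\ell_2$ you have $\partial_{q_1}=h\,\partial_{\ell_1}$ and $\partial_{q_2}=h^2\partial_{\ell_2}$. The jet determinant therefore becomes $\sim h^4\cdot h^3=h^7$, not $\sim1$. Moreover, $s$ ranges over all of $\I$, so one of $\ell_1,\ell_2$ varies by $\gtrsim1$, and the rescaled parameter set has diameter $\gtrsim h^{-1}$. To absorb the factor $h^4$ you need coordinates that \emph{shrink}, essentially $(y-sF_t(u_*),\,hs,\,h^3t)$; compare $\beta_1=h(s'a-s)$ and $\beta_2=s'h^3(\tau-t)$ in the paper. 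In these coordinates the family is a thin set of size $1\times h\times h^4$. With this scaling, your limiting determinant $\propto P_{c_1}'(z_0)(\kappa+v)$ is correct.

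\emph{Even then, the hypotheses of Lemma~\ref{lem:app-nondegenerate} cannot hold uniformly in $h$.} The degenerate direction moves $t$, with compensating changes of $s$ and $y$. Along it the jet changes at a rate governed by the Wronskian $\partial_\rho(\partial_u^2F_\rho/\partial_uF_\rho)=-P_{c_1}(z)/(\rho^4w^6(\partial_uF_\rho)^2)\sim h$. The $\rho$-derivative of this Wronskian is $\sim1$, because $\partial_\rho z=-\alpha/\rho+O(h)$ and $P_{c_1}'(z_0)\ne0$. Normalizing the thin coordinate so that the jet derivative has unit size ($\partial_{q_2}\approx h^{-3}\partial_t$) thus produces second derivatives of size $h^{-6}\cdot h^2=h^{-4}$.

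This is not a coordinate artifact. The map from the $2$-jet at $v=0$ to the $2$-jet at $v=\eta_0/2$ already has second derivative $\sim\eta_0h^{-4}$ along the thin direction. It arises through the $v^3$ coefficient $-sh^3\partial_u^3F_t(u_*)/6$, using also $\partial_t(\partial_u^3F_t/\partial_uF_t)\ne0$ at $z_0$. Only the variation of first derivatives across the thin range, of length $\sim\eta_0h^4$, is controlled, so nondegeneracy must be proved in difference form.

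The paper does this with divided differences in the \emph{parameter}, not in $v$. It sets $\Gamma_{t,\tau}=F_\tau-aF_t-b$ and uses the exact identity $\partial_u^2\Gamma_{t,\tau}(u_*)=\partial_uF_\tau(u_*)\int_t^\tau\partial_\rho(\partial_u^2F_\rho/\partial_uF_\rho)(u_*)\,d\rho$. The integrand has one sign on $T$, which gives $|\partial_u^2\Gamma_{t,\tau}(u_*)|\sim h|\tau-t|$. This yields the basis $\{1,L_t,B_{t,\tau}\}$ with $|L_t'B_{t,\tau}''-L_t''B_{t,\tau}'|\gtrsim1$ on $J_0$. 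Your alternative, divided differences of $G_p$ at three points of $J_0$, only repackages the $2$-jet and does not address this.

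\textbf{The output bound is also not obtained.} Your inequality $|\Delta(t,s)|\lesssim h|\Delta q|$ is false for any bi-Lipschitz parametrization. Take $p'=(y+\sigma F_t(u_*),t,s+\sigma)$. Then $G_p-G_{p'}=\sigma hL_t$ with $L_t(v)=(F_t(u_*+hv)-F_t(u_*))/h$, so $\|G_p-G_{p'}\|_{C^2(J_0)}\sim h|\sigma|$. Meanwhile $|s-s'|=|\sigma|$ and $|y-y'|\sim|\sigma|$, since $F_t(\alpha t)\ne0$. The factor $h^{-1}$ in \eqref{eq:app-degenerate-output} comes from this $s$-direction, not from $q_1=h^{-1}\ell_1$.

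In the correct scaling, $|\tau-t|$ can be as large as $h^{-3}$ times the $C^2$ distance. So recovering $y$ from $y-sF_t(u_*)$ and $(t,s)$ gives only $h^{-3}$. The missing idea is a second cancellation at the common zero. The paper writes $y-y'=\beta_0-h^{-1}F_t(u_*)\beta_1-s'b$ and shows $|b|\lesssim h^2|\tau-t|\lesssim h^{-1}|\beta_2|$. This holds because $\mathcal R_t=\partial_tF_t-(\partial_t\partial_uF_t/\partial_uF_t)F_t$ vanishes to second order at $u=\alpha t$. The vanishing of $\mathcal R_t(\alpha t)$ uses the second equation $Q_{c_2}(z_0)=0$, through the value $F_t(\alpha t)=c_2+w_0$.

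With only $h^{-3}$, the factor $h^{1-2/r}$ in the later dyadic summation would become $h^{1-4/r}$, which is not summable at $r=3$. Finally, your fallback to the second slice cannot help. The paper shows that $V$ lies in the kernel of the parameter $2$-jet map for every choice of the free output coordinate.
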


\begin{proof}
Throughout, constants depend only on $c$ and $z_0$, and on $\eta_0$
once it has been fixed. We first record the properties of $F_t$ on
$I\times T$, then construct a basis of graph differences adapted to the
degeneracy, and finally recover the output coordinate.

\subsubsection*{Preliminaries}
For $(u,\rho)\in I\times T$, let $z=u/\rho-c_1$ and $w=\sqrt{1-z^2}$
as defined after \eqref{eq:app-graph}. Then $z-z_0=(u-\alpha\rho)/\rho$, so
$z-z_0$ has one sign and $|z-z_0|\sim h$. Since $P_{c_1}(z_0)=0$ and
$P_{c_1}'(z_0)\ne0$ by \eqref{eq:app-critical-data}, for $h_0$
sufficiently small $P_{c_1}(z)$ has one sign and $|P_{c_1}(z)|\sim h$
on $I\times T$. Moreover, for $h_0$ sufficiently small we have
$|z_0|/2\le|z|\le(1+|z_0|)/2$ on $I\times T$, so that $|z|\sim1$ and
$w=\sqrt{1-z^2}\sim1$ with constants depending only on $z_0$; hence
$\partial_uF_\rho(u)=-z/(\rho w)$ is bounded away from zero, and all
derivatives of $F_\rho(u)$ in $(u,\rho)$ of fixed order are bounded for
the same reason. Since $\partial_u^2F_\rho/\partial_uF_\rho=1/(\rho zw^2)$
and $\partial_\rho z=-(z+c_1)/\rho$ at fixed $u$, direct differentiation
gives the Wronskian identity
\begin{equation}\label{eq:app-wronskian}
 \partial_\rho\left(\frac{\partial_u^2F_\rho}{\partial_uF_\rho}\right)
 =-\frac{P_{c_1}(z)}{\rho^4w^6(\partial_uF_\rho)^2}.
\end{equation}
Since the denominator on the right is positive and bounded above and
below by the preliminary bounds, the right-hand side has the sign of
$-P_{c_1}(z)$, which is constant on $I\times T$, and magnitude
comparable to $h$.

\subsubsection*{The divided-difference basis}
For $t,\tau\in T$ with $t\ne\tau$, define
\[
  \Gamma_{t,\tau}(u)=F_\tau(u)-aF_t(u)-b,
\]
where
\begin{equation}\label{eq:app-ab}
 (a,b)=\Bigl(\frac{\partial_uF_\tau(u_*)}{\partial_uF_t(u_*)},\ F_\tau(u_*)-aF_t(u_*)\Bigr).
\end{equation}
Then $\Gamma_{t,\tau}(u_*)=\partial_u\Gamma_{t,\tau}(u_*)=0$ by
\eqref{eq:app-ab}. Moreover
$a\,\partial_u^2F_t(u_*)=\partial_uF_\tau(u_*)\,(\partial_u^2F_t/\partial_uF_t)(u_*)$,
so the fundamental theorem of calculus gives the exact identity
\[
 \partial_u^2\Gamma_{t,\tau}(u_*)
 =\partial_uF_\tau(u_*)
   \int_t^\tau\partial_\rho
   \left(\frac{\partial_u^2F_\rho(u_*)}{\partial_uF_\rho(u_*)}\right)
   \,d\rho.
\]
By \eqref{eq:app-wronskian} the integrand has one sign and magnitude
comparable to $h$ on $T$, so the integral has no cancellation and
\begin{equation}\label{eq:app-second-difference}
 |\partial_u^2\Gamma_{t,\tau}(u_*)|\sim h|\tau-t|.
\end{equation}
Since $\partial_uF_\tau(u_*)-\partial_uF_t(u_*)=O(|\tau-t|)$ and
$\partial_uF_t(u_*)$ is bounded below, $a=1+O(|\tau-t|)$. Hence
$\partial_u^3\Gamma_{t,\tau}=(\partial_u^3F_\tau-\partial_u^3F_t)
+(1-a)\partial_u^3F_t=O(|\tau-t|)$ on $I$, by the bounds on the
derivatives of $F_\rho$. On the rescaled
interval $J_0$, define
\begin{equation}\label{eq:app-divided-basis}
 \bigl(L_t(v),B_{t,\tau}(v)\bigr)=\Bigl(\frac{F_t(u_*+hv)-F_t(u_*)}{h},\ \frac{\Gamma_{t,\tau}(u_*+hv)}{h^3(\tau-t)}\Bigr).
\end{equation}
Then $L_t(0)=0$, $L_t'(v)=\partial_uF_t(u_*+hv)$, and
$L_t''(v)=h\,\partial_u^2F_t(u_*+hv)$, so
\[
 |L_t'|\sim1,
\]
while $|L_t|\lesssim\eta_0$ and $|L_t''|\lesssim h$, by the bounds on $\partial_uF_t$ and $\partial_u^2F_t$ in the
preliminaries. Since $\Gamma_{t,\tau}(u_*)=\partial_u\Gamma_{t,\tau}(u_*)=0$,
Taylor's formula for $\Gamma_{t,\tau}$ at $u_*$, with the bounds
\eqref{eq:app-second-difference} and
$|\partial_u^3\Gamma_{t,\tau}|\lesssim|\tau-t|$, gives
\[
 B_{t,\tau}(v)=\frac{\partial_u^2\Gamma_{t,\tau}(u_*)}{2h(\tau-t)}v^2
 +O(|v|^3).
\]
Writing the remainder as $R(v)$, we have
\[
 |R^{(j)}(v)|\le C|v|^{3-j}\quad(j=0,1,2),
 \qquad \|R'''\|_{L^\infty(J_0)}\le C,
\]
with $C$ independent of $h,t,\tau$.
By \eqref{eq:app-second-difference} the coefficient of
$v^2$ is comparable to one, so choosing $\eta_0$ small enough gives
\[
 |B_{t,\tau}''|\gtrsim1,
\]
while $\|B_{t,\tau}\|_{C^2(J_0)}+\|B_{t,\tau}'''\|_{L^\infty(J_0)}\lesssim1$ and $|B_{t,\tau}'|\lesssim\eta_0$.
Fix $\eta_0$ in this way. Then
\begin{equation}\label{eq:app-basis-determinant}
 |L_t'B_{t,\tau}''-L_t''B_{t,\tau}'|\gtrsim1
\end{equation}
on $J_0$ for $h\le h_0$, after decreasing $h_0$ if necessary, since the
subtracted term is $O(h\eta_0)$.

\subsubsection*{Expansion of graph differences}
Let $p=(y,t,s)$ and $p'=(y',\tau,s')$ be in $\mathcal P$ with
$t\ne\tau$. Writing $F_\tau=aF_t+b+\Gamma_{t,\tau}$ and
$F_t(u_*+hv)=F_t(u_*)+hL_t(v)$, we obtain
\begin{equation}\label{eq:app-difference-expansion}
 G_p-G_{p'}=\beta_0+\beta_1L_t+\beta_2B_{t,\tau},
\end{equation}
with
\begin{equation}\label{eq:app-beta}
 \begin{aligned}
 \beta_0&=y-y'+(s'a-s)F_t(u_*)+s'b,\\
 \beta_1&=h(s'a-s),\\
 \beta_2&=s'h^3(\tau-t).
 \end{aligned}
\end{equation}
When $t=\tau$, the same holds with $a=1$, $b=0$, and $\beta_2=0$.

\subsubsection*{The cinematic condition and the doubling bound}
Let $f=\beta_0+\beta_1L_t+\beta_2B_{t,\tau}$. At every $v\in J_0$,
\[
 \bigl(f,f',f''\bigr)(v)
 =\beta_0(1,0,0)+\beta_1(L_t,L_t',L_t'')(v)
 +\beta_2(B_{t,\tau},B_{t,\tau}',B_{t,\tau}'')(v).
\]
The matrix of this linear map has bounded entries and determinant
$L_t'B_{t,\tau}''-L_t''B_{t,\tau}'$, which is bounded below by
\eqref{eq:app-basis-determinant}. Hence
$|\beta|\lesssim|f(v)|+|f'(v)|+|f''(v)|$ for every $v\in J_0$, while
the bounds on the basis give $\|f\|_{C^2(J_0)}\lesssim|\beta|$. Thus
\[
 \inf_{v\in J_0}\bigl(|f(v)|+|f'(v)|+|f''(v)|\bigr)
 \ge K^{-1}\|f\|_{C^2(J_0)}
\]
for some $K$ depending only on $c$ and $z_0$.
When $t=\tau$, we have $f=\beta_0+\beta_1L_t$, and
$(f,f')(v)=(\beta_0+\beta_1L_t(v),\beta_1L_t'(v))$ is a linear map of
$(\beta_0,\beta_1)$ with bounded entries and determinant $L_t'(v)$,
whose absolute value is comparable to one; the same argument therefore
applies. By \eqref{eq:app-difference-expansion}, this is
\eqref{eq:app-cinematic-metric} for $\mathcal F_{I,T}$ on $J_0$. The $C^2$ norms of the
$G_p$ are bounded because $|y|\le1/2$, so the $C^2$ diameter of
$\mathcal F_{I,T}$ is bounded as well, and $\mathcal F_{I,T}$ is
compact as the continuous image of $\mathcal P$.

For the doubling bound, consider the map
$G_p\mapsto(G_p,G_p',G_p'')(0)\in\R^3$. It is Lipschitz, and by the
preceding estimate at $v=0$ its inverse is Lipschitz on the image.
Thus $\mathcal F_{I,T}$ is bi-Lipschitz equivalent to a subset of
$\R^3$. Every subset of $\R^3$ is doubling with a dimensional constant,
and a bi-Lipschitz map changes the doubling constant by a factor
depending only on the bi-Lipschitz constants; hence $D$ depends only on
$c$ and $z_0$.

\subsubsection*{The output coordinate}
It remains to prove \eqref{eq:app-degenerate-output}. Put
\[
 \mathcal R_t(u)=\partial_tF_t(u)
       -\frac{\partial_t\partial_uF_t(u)}{\partial_uF_t(u)}F_t(u),
\]
with parameter derivatives taken at fixed $u$. Substituting
$\partial_tF_t=z(z+c_1)/(tw)$ and
$\partial_t\partial_uF_t=z/(t^2w)+(z+c_1)/(t^2w^3)$, which follow from
$\partial_tz=-(z+c_1)/t$, we obtain
\[
 \mathcal R_t(u)=\frac1t\left[\frac{z(z+c_1)}{w}
 +F_t(u)\frac{c_1+2z-z^3}{zw^2}\right].
\]
At $u=\alpha t$, that is, $z=z_0$, \eqref{eq:app-critical-data} gives
$z_0(z_0+c_1)/w_0=z_0^2w_0/(1-3z_0^2)$, while
\eqref{eq:app-common-zero-center} gives
$c_1+2z_0-z_0^3=z_0w_0^2(2-3z_0^2)/(1-3z_0^2)$, so by
\eqref{eq:app-critical-data} the second term equals
$-z_0^2w_0/(1-3z_0^2)$. Hence $\mathcal R_t(\alpha t)=0$.
Differentiating in $u$ gives
\[
 \partial_u\mathcal R_t(u)
 =\frac{F_t(u)P_{c_1}(z)}{t^4w^6(\partial_uF_t(u))^2},
\]
so $\partial_u\mathcal R_t(\alpha t)=0$ as well. Since
$\partial_u^2\mathcal R_t$ is bounded on the patch and
$|u-\alpha t|\sim h$ for $u\in I$, Taylor's formula at $\alpha t$
gives $|\mathcal R_t(u)|\lesssim h^2$ on $I\times T$. By \eqref{eq:app-ab},
$b=\partial_uF_\tau(u_*)\bigl[F_\tau/\partial_uF_\tau
-F_t/\partial_uF_t\bigr](u_*)$, and the quotient rule gives
$\partial_t(F_t/\partial_uF_t)=\mathcal R_t/\partial_uF_t$. Hence
\begin{equation}\label{eq:app-intercept-bound}
 |b|=
 \bigg|\partial_uF_\tau(u_*)
 \int_t^\tau\frac{\mathcal R_\rho(u_*)}{\partial_uF_\rho(u_*)}\,d\rho\bigg|
 \lesssim h^2|\tau-t|.
\end{equation}

Now let $p,p'\in\mathcal P$ with $t\ne\tau$, and let $f=G_p-G_{p'}$ with
the coefficients of \eqref{eq:app-difference-expansion}. Solving the
formula for $\beta_0$ in \eqref{eq:app-beta} gives
\[
 y-y'=\beta_0-h^{-1}F_t(u_*)\beta_1-s'b.
\]
By \eqref{eq:app-intercept-bound} and the formula for $\beta_2$ in
\eqref{eq:app-beta}, the
last term is at most $Ch^2|\tau-t|=Ch^{-1}|\beta_2|/s'\le Ch^{-1}|\beta_2|$.
Since $|\beta|\lesssim\|f\|_{C^2(J_0)}$ by the preceding step and
$h\le1$, this gives $|y-y'|\lesssim h^{-1}\|f\|_{C^2(J_0)}$, which is
\eqref{eq:app-degenerate-output}. When $t=\tau$, the same follows from
$y-y'=\beta_0-h^{-1}F_t(u_*)\beta_1$.
\end{proof}

\subsubsection*{Application of Lemma~\ref{lem:app-selected-curves}}
In the proof of Proposition~\ref{prop:translated-pyz-cinematic}, the
output coordinate $y$ is localized to unit intervals; a vertical
translation of the input reduces each of them to $[-\frac12,\frac12]$.
For the family $\mathcal F_{I,T}$ of Lemma~\ref{lem:app-degenerate-graphs},
assign to $G_p$ the output coordinate $\eta(G_p)=hy$. By
\eqref{eq:app-degenerate-output} this is well defined and satisfies
\eqref{eq:app-output-projection} with a constant independent of $h$,
and its range has length $h$. Lemma~\ref{lem:app-selected-curves}
therefore applies to $\mathcal F_{I,T}$ uniformly in $h$, $I$, and
$T$.

\subsection{Frequency localization and summation}\label{sec:app-frequency-sum}
We pass from thickened curves to frequency-localized averages. For
$|c|<1$ the regular arcs of Section~\ref{sec:app-slices} cover the
circle; for $|c|\ge1$ we also sum the dyadic pieces of
Section~\ref{sec:app-common-zeros} near the common zeros.
We now complete the proof of Proposition~\ref{prop:translated-pyz-cinematic}.

\begin{proof}[Proof of Proposition~\ref{prop:translated-pyz-cinematic}]
Set $g=P_\lambda f$.  Its vertical Fourier support is contained in
$|\xi_2|\le C\lambda$.

\subsubsection*{Vertical reproduction}
If a function has vertical Fourier support in
$|\xi_2|\le C\Lambda$, $\Lambda\ge1$, choose a Schwartz function
$\omega$ whose Fourier transform is one on $[-C,C]$.  With
$\omega_\Lambda(z)=\Lambda\omega(\Lambda z)$, vertical convolution
reproduces the function.  For every fixed large $N$,
\begin{equation}\label{eq:app-vertical-reproduction}
 |\omega_\Lambda(z)|
 \lesssim_N\sum_{k\ge0}2^{-Nk}\delta_k^{-1}
                  \mathbf1_{\{|z|\le\delta_k\}},
\end{equation}
where $\delta_k=2^k/\Lambda$.
This bounds graph evaluation by a rapidly convergent sum of positive
vertical averages.  Impose input restrictions only after reproduction,
in the terms with $\delta_k<1$, and dominate bounded densities and
cutoffs by a constant.  For these terms, \eqref{eq:app-selected-strong}
gives $C_{r,\varepsilon}\delta_k^{-\varepsilon}$, whose weighted sum
is at most $C_{r,\varepsilon}\Lambda^\varepsilon$.

\subsubsection*{Regular arcs and the case \texorpdfstring{$|c|<1$}{|c|<1}}
On the regular arcs of Section~\ref{sec:app-slices}, there are only
finitely many fixed localizations.  Freeze one output coordinate, apply
Lemma~\ref{lem:app-selected-curves} to the other, and then integrate
the fixed coordinate.  Restricting the free output coordinate to unit
intervals keeps the graph norms bounded.  The corresponding input
intervals have bounded overlap because the averaging curves lie in
a fixed bounded set.  This gives
$C_{c,r,\varepsilon}\lambda^\varepsilon\|g\|_r$ for the regular arcs.
For each $k$ with $\delta_k\ge1$, bounded graph displacements give
a common rectangle kernel with uniformly bounded $L^1$ norm;
the coefficients in \eqref{eq:app-vertical-reproduction} are summable.
Together with $\|P_\lambda f\|_r\lesssim_r\|f\|_r$, this proves
\eqref{eq:app-diagonal-loss} when there are no common zeros, in
particular when $|c|<1$.

\subsubsection*{Dyadic pieces}
It remains to treat the neighborhoods of the common zeros from
Section~\ref{sec:app-common-zeros}. Let $z_0$ be a common zero and
$\alpha=c_1+z_0$; thus $\alpha=0$ at an axis vertex. Fix
$\chi\in C_c^\infty(\{\tfrac12<|\zeta|<2\})$ with $\sum_h\chi(\zeta/h)=1$ for
$0<|\zeta|\le h_0$, the sum taken over dyadic $h\le h_0$. Here $h_0>0$
depends only on $c$; we take it no larger than the constant $h_0$ of
Lemma~\ref{lem:app-degenerate-graphs} for each nonaxis common zero and
with $h_0^2\le\bar\beta$, and the rest of the arc is a regular arc.
For such $h$, define
\[
 \begin{aligned}
 \A^{c,h}_{t,s}g(x,y)&=\int g\bigl(x-u,\,y-sF_t(u)\bigr)\,
   \chi\Bigl(\frac{u-\alpha t}{h}\Bigr)\,d\mu_t(u),\\
 \M^c_{\lambda,h}f&=\sup_{(t,s)\in\I^2}\bigl|\A^{c,h}_{t,s}P_\lambda f\bigr|,
 \end{aligned}
\]
where $F_t$ is as in \eqref{eq:app-graph} and
\[
 d\mu_t(u)=\frac{du}{2\pi\sqrt{t^2-(u-c_1t)^2}}
\]
is the averaging density. We bound
$\|\M^c_{\lambda,h}f\|_r$ for each $h$ and sum over $h$ at the end of
the proof.

\subsubsection*{Pieces near an axis vertex}
Let $c=(0,-1)$, so that $\alpha=0$ and $F_t(u)=-1+\sqrt{1-u^2/t^2}$.
Put $(x,y)=(h\widetilde x,h^2\widetilde y)$, $u=hv$, and
$g_h(\widetilde x,\widetilde z)=g(h\widetilde x,h^2\widetilde z)$. By
\eqref{eq:app-axis-rescaling},
\[
 \A^{c,h}_{t,s}g(h\widetilde x,h^2\widetilde y)
 =\int g_h\bigl(\widetilde x-v,\,\phi(v;\widetilde y,a,\beta)\bigr)\,
  \chi(v)\,\frac{h}{2\pi t}(1-\beta v^2)^{-1/2}\,dv,
\]
with $a=s/t^2\in[\tfrac14,2]$ and $\beta=h^2/t^2\in[0,\bar\beta]$ for
$h\le h_0$. After a vertical translation, $\widetilde y$ ranges over
$[-\tfrac12,\tfrac12]$. For each $h$, the curves therefore belong to
the single family
\[
 \begin{aligned}
 \mathcal F_{\mathrm{ax}}
 &=\bigl\{\phi(\cdot;\widetilde y,a,\beta):
   \widetilde y\in[-\tfrac12,\tfrac12],\ a\in[\tfrac14,2],\
   \beta\in[0,\bar\beta]\bigr\}\subset C^2(J_\pm),\\
 J_\pm&=\pm[\tfrac12,2],
 \end{aligned}
\]
which is independent of $h$; the two signs of $v$ are treated
separately. By Section~\ref{sec:app-common-zeros}, a fixed finite
subdivision of the graph interval and parameter set gives subfamilies
satisfying the hypotheses of Lemma~\ref{lem:app-selected-curves} with
output coordinate $\widetilde y$. The number of pieces and all constants
are independent of $h$. The
input $g_h$ has vertical Fourier support in $|\xi_2|\le C\lambda h^2$.
We apply \eqref{eq:app-vertical-reproduction} with
$\Lambda=\max\{1,\lambda h^2\}$, then
Lemma~\ref{lem:app-selected-curves} to each of these subfamilies with
input $(v,\widetilde z)\mapsto g_h(\widetilde x-v,\widetilde z)$ at
fixed $\widetilde x$, sum over the pieces, and integrate in
$\widetilde x$ as for the regular arcs. The density is at most $Ch$.
The Jacobians cancel, since
$\|G(h\,\cdot,h^2\,\cdot)\|_r=h^{-3/r}\|G\|_r$ for both the output and
the input $g_h$. Hence
\begin{equation}\label{eq:app-axis-piece}
 \|\M^c_{\lambda,h}f\|_r
 \le C_{c,r,\varepsilon}h\max\{1,\lambda h^2\}^{\varepsilon}\|g\|_r
 \le C_{c,r,\varepsilon}h\lambda^\varepsilon\|g\|_r.
\end{equation}
For $\delta_k\ge1$, the graphs in $\mathcal F_{\mathrm{ax}}$ are
bounded, so in the rescaled coordinates the corresponding average is
dominated by the convolution of $|g_h|$ with
$Ch\,\delta_k^{-1}\mathbf 1_{\{|v|\le2,\ |\widetilde z|\le C\delta_k\}}$,
whose $L^1$ norm is $O(h)$. Since the Jacobians cancel as before, the
weighted sum of these terms is also bounded by $Ch\|g\|_r$.

\subsubsection*{Pieces near a nonaxis common zero}
Let $0<|z_0|<1$ and $h\le h_0$. The support of
$\chi((u-\alpha t)/h)$ is covered by the rectangles $I\times T$
constructed before Lemma~\ref{lem:app-degenerate-graphs}; the
intervals $I$ have bounded overlap, and each $I$ meets a bounded
number of intervals $T$. Fix such a rectangle and the first output
coordinate $x$, and write $u=u_*+hv$, $p=(y,t,s)$, and
$g_{x,I}(v,z)=g(x-u_*-hv,z)$. Then, for $t\in T$, the part of
$\A^{c,h}_{t,s}g(x,y)$ with $u\in I$ equals
\[
 \int_{J_0}g_{x,I}\bigl(v,G_p(v)\bigr)\,
 \chi\Bigl(\frac{u-\alpha t}{h}\Bigr)\,d\mu_t(u_*+hv),
\]
where $G_p\in\mathcal F_{I,T}$ is the graph of
Lemma~\ref{lem:app-degenerate-graphs}. We apply
Lemma~\ref{lem:app-selected-curves} to $\mathcal F_{I,T}$ with output
coordinate $\eta(G_p)=hy$; by \eqref{eq:app-degenerate-output} it
satisfies \eqref{eq:app-output-projection}, and its range has length
$h$. Vertical reproduction acts only in the second variable, so it
leaves $I$ unchanged, and $g_{x,I}$ has vertical Fourier support in
$|\xi_2|\le C\lambda$. Three factors appear: the density contributes
$h$; $\|g_{x,I}\|_{L^r(J_0\times\R)}
=h^{-1/r}\|g(x-\cdot,\cdot)\|_{L^r(I\times\R)}$; and $d\eta=h\,dy$
gives $\|\cdot\|_{L^r(dy)}=h^{-1/r}\|\cdot\|_{L^r(d\eta)}$. Thus the
fine-tube part of $\sup_{t\in T,\,s}|\A^{c,h}_{t,s}g(x,\cdot)|$, with
$u\in I$, is bounded in $L^r(dy)$ by
\[
 C_{c,r,\varepsilon}h^{1-2/r}\lambda^\varepsilon
 \|g(x-\cdot,\cdot)\|_{L^r(I\times\R)}.
\]
Here Lemma~\ref{lem:app-selected-curves} remains uniform even when the
output interval is shorter than the tube width, because its output
measure is unnormalized.

We now assemble the fine tubes. Localize $y$ to the unit intervals
$Y_j=[j,j+1]$, $j\in\mathbb Z$. For $y\in Y_j$, the fine tubes involve
only vertical input coordinates
\[
 z\in Y_j+[-C_c-1,C_c+1],
\]
since $|sF_t(u)|\le C_c$ and $\delta_k<1$. These enlarged intervals
have bounded overlap, uniformly in $h$ and $k$.
For fixed parameters the dyadic arc
has horizontal length $O(h)$, so it meets a bounded number of
intervals $I$. Hence the maximal piece is bounded by a constant times
the $\ell^r$ sum over $I$ of the localized maximal pieces. The
intervals $I$ have bounded overlap and lie in a fixed bounded set
depending only on $c$. Integrating the preceding bound in $x$ and
applying Fubini's theorem therefore gives
\[
 \int_{\R}\sum_I\int_I\int_{\R}
       |g(x-u,z)|^r\,dz\,du\,dx
 =\Big(\sum_I|I|\Big)\|g\|_r^r \lesssim_c\|g\|_r^r.
\]

For $\delta_k\ge1$, the uniform bound $|sF_t(u)|\le C_c$ and
H\"older's inequality on $I$, with $|I|=\eta_0h$, give
\[
 \begin{aligned}
 &\biggl(\frac1{\delta_k}\int_I
       \int_{|z-y+sF_t(u)|<\delta_k}|g(x-u,z)|\,dz\,du\biggr)^r\\
 &\qquad\lesssim h^{r-1}\frac1{\delta_k}
       \int_I\int_{|z-y|<C_c\delta_k}|g(x-u,z)|^r\,dz\,du,
 \end{aligned}
\]
uniformly in the parameters. The same $\ell^r$ sum and integration in
$x,y$ give an $L^r$ bound
$C_ch^{1-1/r}\|g\|_r\le C_ch^{1-2/r}\|g\|_r$, and the rapidly
decreasing coefficients in \eqref{eq:app-vertical-reproduction} absorb
these coarse tubes. We have consequently proved the global bound
\begin{equation}\label{eq:app-nonaxis-piece}
 \|\M^c_{\lambda,h}f\|_r
 \lesssim_{c,r,\varepsilon}
 h^{1-2/r}\lambda^\varepsilon\|g\|_r
\end{equation}
for each nonaxis dyadic piece.

\subsubsection*{Summation}
For $|c|\ge1$, summing over the finitely many graph patches and the
common zeros $z_0$ in each patch, we have pointwise
\[
 \M^c_\lambda f\le\M^c_{\lambda,\mathrm{reg}}f
 +\sum_{z_0}\sum_{h\le h_0}\M^c_{\lambda,h}f,
\]
where $\M^c_{\lambda,\mathrm{reg}}$ is the maximal operator over the
regular arcs; its $L^r$ norm is at most
$C_{c,r,\varepsilon}\lambda^\varepsilon\|g\|_r$ by the bound for the
regular arcs.
The points $u=\alpha t$ themselves carry no averaging measure. The sums
of $h$ in \eqref{eq:app-axis-piece} and of $h^{1-2/r}$ in
\eqref{eq:app-nonaxis-piece} converge over dyadic $h$ for $r\ge3$; at
$r=3$ the latter factor is $h^{1/3}$. Since
$\|P_\lambda f\|_r\lesssim_r\|f\|_r$, this proves
\eqref{eq:app-diagonal-loss} for every fixed $c$.
\end{proof}

\subsection{Diagonal frequency decay}\label{sec:translated-diagonal}
Combining Proposition~\ref{prop:translated-pyz-cinematic} with local
smoothing gives the following decay.

\begin{lemma}\label{lem:translated-diagonal}
For every $3<r<\infty$ there is $\kappa_r>0$ such that, for every fixed
$c\in\R^2$ and every dyadic $\lambda\ge2$,
\[
 \|\M^c_\lambda f\|_r\lesssim_{c,r}\lambda^{-\kappa_r}\|f\|_r.
\]
The decay exponent $\kappa_r$ can be chosen independently of $c$.
\end{lemma}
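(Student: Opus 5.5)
The plan is to repeat the argument of Lemma~\ref{lem:diagonal}: interpolate the $L^3$ bound with essentially no loss against an $L^{p_0}$ bound with genuine decay at a large exponent. At $r=3$, Proposition~\ref{prop:translated-pyz-cinematic} gives
\[
\|\M^c_\lambda f\|_3\lesssim_{c,\varepsilon}\lambda^\varepsilon\|f\|_3 .
\]
So it remains to prove, for some fixed $p_0\ge12$ independent of $c$,
\[
\|\M^c_\lambda f\|_{p_0}\lesssim_{c,p_0,\varepsilon}\lambda^{-1/p_0+\varepsilon}\|f\|_{p_0}.
\]

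For the large exponent I would write $\A^c_{t,s}P_\lambda f$ through the branch decomposition \eqref{eq:branch-operator}. The only change is that the translation by $(tc_1,sc_2)$ turns the phase into
\[
x\cdot\xi\pm\Phi_{t,s}(\xi)-tc_1\xi_1-sc_2\xi_2 .
\]
The added term is linear in $(t,s)$ and in $\xi$. It therefore does not change the cinematic curvature conditions in the frequency variables used in the local smoothing estimate \cite[Proposition 2.3]{LLOelliptic}. Indeed, for fixed $(t,s)$ it is a spatial translation, and the local smoothing theorem concerns the full spacetime phase. The step to check is that $\partial_{t,s}$ of the phase still satisfies the cone-type nondegeneracy (the rank condition of the spacetime Hessian). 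Adding a term linear in $\xi$ shifts the $(t,s)$-gradient by $-(c_1\xi_1,c_2\xi_2)$, which is homogeneous of degree one. I expect this to be harmless after a change of output variables $x\mapsto x+(tc_1,sc_2)$. Since that change is a shear of spacetime with unit Jacobian, local smoothing norms in $L^{p_0}(dx\,dt\,ds)$ are preserved exactly.

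So the spacetime bound for $\A^c_{t,s}P_\lambda f$ is the same as in the centered case, namely $\lambda^{-1/2}\lambda^{1/2-3/p_0+\varepsilon}$. This holds also for the normalized derivatives $\lambda^{-\alpha-\beta}\partial_t^\alpha\partial_s^\beta$. Parameter derivatives of the translated phase produce the extra factors $c_1\xi_1/\lambda$ and $c_2\xi_2/\lambda$, which are bounded symbols of order zero with constants depending on $c$.

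Next I would apply the parameter Sobolev inequality, Lemma~\ref{lem:parameter-sobolev}, with $L_1=L_2=\lambda$ and $r=p_0$. This must be done in the original coordinates $(x,t,s)$ at each fixed $x$, not the sheared ones. That is legitimate because the derivative bounds were obtained as spacetime norms. The result is $\lambda^{-1/p_0+\varepsilon}$, with constants depending on $c$ only through the symbol bounds; globalization proceeds as before using rapid kernel decay off $B(0,2+2|c|)$.

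Finally I would linearize the supremum and interpolate. Writing $1/r=(1-\vartheta)/3+\vartheta/p_0$ gives decay $\lambda^{\varepsilon-\vartheta/p_0}$. Taking $\varepsilon=\vartheta/(2p_0)$ yields $\kappa_r=\vartheta/(2p_0)$, which depends only on $r$ and $p_0$ and hence not on $c$; only the implicit constant depends on $c$.

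I expect the main obstacle to be justifying that \cite[Proposition 2.3]{LLOelliptic} applies to the translated phase with exponent independent of $c$. If the shear argument has a gap, for instance because local smoothing there is stated for a specific amplitude or phase class, I would instead verify its hypotheses directly. Those hypotheses are the cinematic curvature conditions, and they involve only second derivatives in $\xi$ of the phase and the rank of $\nabla_{x,t,s}$. The linear translation term does not affect the $\xi$-Hessian, and it changes $\nabla_{t,s}$ by a term linear in $\xi$. The conditions should then follow from the centered case after the unimodular output change.
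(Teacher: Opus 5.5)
Your proposal is correct and follows the paper's proof essentially step by step. The common ingredients are the $L^3$ bound from Proposition~\ref{prop:translated-pyz-cinematic}, the transfer of the centered local smoothing estimate through the measure-preserving shear $x\mapsto x-(tc_1,sc_2)$, and parameter derivatives taken at fixed $x$. The extra terms $c_1\xi_1/\lambda$ and $c_2\xi_2/\lambda$ that you treat as order-zero symbols are written in the paper as the centered operator applied to $\lambda^{-1}\partial_jP_\lambda f$. The proofs then agree on localization via $B(0,2+2|c|)$ and on interpolation with $\kappa_r=\vartheta/(2p_0)$.

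The only point to state explicitly is that $p_0$ must be chosen with $p_0>\max\{12,r\}$, which is possible because the local smoothing bound holds for every $12\le p_0<\infty$. Your fallback of re-verifying the cinematic curvature conditions is not needed, since the shear already transfers the centered estimate exactly.
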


\begin{proof}
We follow the proof of Lemma~\ref{lem:diagonal} with two changes.
First, the $L^3$ bound is now
Proposition~\ref{prop:translated-pyz-cinematic} with $r=3$:
\[
 \|\M^c_\lambda f\|_3
 \lesssim_{c,\varepsilon}\lambda^\varepsilon\|f\|_3.
\]
Second, for local smoothing we write
$\A^c_{t,s}f(x)=\A^0_{t,s}f\bigl(x-(tc_1,sc_2)\bigr)$. The shear
$(x,t,s)\mapsto(x-(tc_1,sc_2),t,s)$ preserves $dx\,dt\,ds$, so the
local smoothing bound for $\A^0_{t,s}P_\lambda f$ in
$L^{p_0}(dx\,dt\,ds)$ applies to $\A^c_{t,s}P_\lambda f$. The parameter
Sobolev inequality also requires the corresponding spacetime bounds for
$\lambda^{-\alpha-\beta}\partial_t^\alpha\partial_s^\beta
[\A^c_{t,s}P_\lambda f](x)$, with $\alpha,\beta\in\{0,1\}$, where the
derivatives are taken at fixed $x$. By the chain rule, with
$x'=x-(tc_1,sc_2)$,
\[
 \lambda^{-1}\partial_t\bigl[\A^c_{t,s}P_\lambda f\bigr](x)
 =\bigl[\lambda^{-1}\partial_t\A^0_{t,s}P_\lambda f\bigr](x')
 -c_1\bigl[\A^0_{t,s}\widetilde P_\lambda
   (\lambda^{-1}\partial_1P_\lambda f)\bigr](x'),
\]
where $\widetilde P_\lambda$ is a frequency projection to a slightly
larger annulus with $\widetilde P_\lambda P_\lambda=P_\lambda$. The
derivative in $s$ is analogous, with $c_2$ and $\partial_2$. The first
term is treated as in Lemma~\ref{lem:diagonal}. The second is the
undifferentiated operator applied to $\lambda^{-1}\partial_1P_\lambda f$,
whose $L^{p_0}$ norm is $O(\|f\|_{p_0})$. The normalized mixed derivative
$\lambda^{-2}\partial_t\partial_s$ is handled by applying the same chain
rule twice; all resulting centered operators satisfy the same local
smoothing estimate. The kernels of
$\A^c_{t,s}P_\lambda$, $(t,s)\in\I^2$, decay rapidly away from
$B(0,2+2|c|)$, and $\M^c_\lambda$ is translation invariant, so the
localization argument of Lemma~\ref{lem:diagonal} applies. Hence
\[
 \|\M^c_\lambda f\|_{p_0}
 \lesssim_{c,p_0,\varepsilon}
 \lambda^{-1/p_0+\varepsilon}\|f\|_{p_0},\qquad 12\le p_0<\infty.
\]
For the prescribed $3<r<\infty$, choose $p_0>\max\{12,r\}$ and
$0<\vartheta<1$ with $1/r=(1-\vartheta)/3+\vartheta/p_0$. Interpolation
gives
$\|\M^c_\lambda f\|_r\lesssim_{c,r,\varepsilon}
\lambda^{\varepsilon-\vartheta/p_0}\|f\|_r$. Taking
$\varepsilon=\vartheta/(2p_0)$ gives the lemma with
$\kappa_r=\vartheta/(2p_0)$, which depends only on $r$ and the choice
of $p_0$, not on $c$; with the same $p_0$ it equals the exponent of
Lemma~\ref{lem:diagonal}.
\end{proof}

\section{Circles with a fixed nonzero center}\label{sec:translated-case}
We now prove Theorem~\ref{thm:translated} for the translated operator
$\M^c$, with $c\ne0$ fixed as in Section~\ref{sec:translated-result}.
We adapt the frequency-localized argument to $\M^c$ and then prove
the necessary conditions.

\subsection{The frequency-localized estimate and sufficiency}\label{sec:translated-sufficiency}
In view of the diagonal decay in Lemma~\ref{lem:translated-diagonal}, the
argument of Section~\ref{sec:sufficiency} reduces sufficiency to the
following estimate.

\begin{proposition}\label{prop:translated}
Let $A>0$. For every $\varepsilon>0$ and dyadic $\lambda\ge2$,
\[
 \|\M^c_\lambda f\|_8
 \lesssim_{A,\varepsilon}\lambda^\varepsilon\|f\|_4,
 \qquad |c|\le A.
\]
\end{proposition}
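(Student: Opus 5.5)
The plan is to rerun the proof of Theorem~\ref{thm:frequency} with exponents $(4,8)$ in place of $(7/2,7)$, keeping every structural step and changing only the maximal estimates for separated and short caps. First reduce to the positive branch: writing $\A^c_{t,s}P_\lambda f(x)=\A_{t,s}P_\lambda f(x-(tc_1,sc_2))$ and using \eqref{eq:branch-decomposition}, each branch becomes
\[
\A^{c,+}_\lambda g(x,t,s)=\frac{\lambda^{-1/2}}{(2\pi)^2}\int e^{i(x\cdot\xi-tc_1\xi_1-sc_2\xi_2+\Phi_{t,s}(\xi))}m^+_\lambda(t,s,\xi)\widehat g(\xi)\,d\xi .
\]
The same conjugation identity handles the negative sign, and reflections and the interchange of coordinates reduce to the first octant, with $c$ replaced by a reflected vector of the same norm. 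The shells $I_b$, the axis interval, the iteration \eqref{eq:iteration} from Lemma~\ref{lem:brd-nrw}, and the choice of $K$ are unchanged. Lemma~\ref{lem:cordoba} is applied at $r=4$, which is its endpoint, together with \eqref{eq:input-sum} at $p=4$. The pairing step then reads $\|G_j\|_8^8\le\sum\|\mathcal B\|_4^4\lesssim(\sum_I\|f_I\|_4^4)^2$. So it suffices to prove two estimates, uniformly for $|c|\le A$.

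The first is the separated estimate $\|\mathcal B^c_{\lambda,I,I'}(g_1,g_2)\|_4\lesssim\|g_1\|_4\|g_2\|_4$ (the analogue of \eqref{eq:translated-separated}). The second is the short-cap estimate with $\lambda^{-1/2}\le|I|\lesssim\lambda^{-1/2}$ in $I_b$, bounded from $L^4$ to $L^8$ by $O(1)$, together with the axis cap (the analogue of \eqref{eq:translated-short}).

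\emph{Separated pairs.} The $L^\infty$ endpoint is unchanged: the kernels are translates of those in Lemma~\ref{lem:kernel}, so the bound is $\delta^2$. For the $L^2$ endpoint I would apply Lemma~\ref{lem:parameter-sobolev} at fixed $x$ in the variables $(t,s)$ themselves, or in a fixed nonsingular linear change of them. The key observation concerns the phase derivatives at $\xi=\lambda r e(\theta)$ with $\theta\approx\theta_0\sim b$:
\[
\partial_t\text{-phase}=\lambda r\Bigl(-c_1\cos\theta+\frac{t\cos^2\theta}{H}\Bigr)=O(\lambda),\qquad
\partial_s\text{-phase}=\lambda r\Bigl(-c_2\sin\theta+\frac{s\sin^2\theta}{H}\Bigr)=O(\lambda b).
\]
The $c$-dependent term destroys the $\lambda b\delta$ cancellation of \eqref{eq:sobolev-coordinates}, but it keeps the scale $\lambda b$. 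I therefore take $(L_1,L_2)=(\lambda,\lambda b)$. Note that $\lambda b\ge1$ since $b\ge\lambda^{-1/2}$. I would check, as in Section~\ref{sec:normalized-derivatives}, that the amplitudes of $\lambda^{-\alpha}(\lambda b)^{-\beta}\partial_t^\alpha\partial_s^\beta$ applied to the product satisfy \eqref{eq:symbol-bounds} uniformly in $|c|\le A$. The relevant quantities are $c_2\sin\theta/b$ and $\sin^2\theta/b$, and their $u$-derivatives at angular scale $\delta$ are bounded.

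Next I pass to the spacetime norm. The shear $(x,t,s)\mapsto(x-(tc_1,sc_2),t,s)$ preserves $dx\,dt\,ds$ and turns the bilinear phase into exactly \eqref{eq:bilinear-phase}, so Proposition~\ref{prop:spacetime} gives $\lambda^{-1}b^{-1/2}\delta^{-2}$ for each differentiated product. Combined with the Sobolev factor $(\lambda\cdot\lambda b)^{1/2}=\lambda b^{1/2}$, the $L^2$ bound becomes $\delta^{-2}$. Bilinear interpolation with the $\delta^2$ endpoint, linearizing the supremum as in Section~\ref{sec:separated-proof}, gives the exponent $\delta^{-2\cdot 2/r+2(1-2/r)}=\delta^{2-8/r}$, which vanishes at $r=4$. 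This balance is what places the vertex at $(1/4,1/8)$.

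\emph{Short caps.} Lemma~\ref{lem:kernel} still gives kernel $L^1$ norms $O(\lambda^{-1/2})$ for the branch and for its normalized derivatives with scales $(\lambda,\lambda b)$, and kernel $L^\infty$ norm $O(\lambda)$, since the kernels are only translated. The Sobolev lemma at $r=7$ gives
\[
\|\sup|\A^{c,+}_{\lambda,I}g|\|_7\lesssim(\lambda^2b)^{1/7}\lambda^{-1/2}\|g\|_7 .
\]
Interpolating with $L^1\to L^\infty$ via $(1/4,1/8)=\tfrac18(1,0)+\tfrac78(1/7,1/7)$ gives $\lambda^{1/8}(\lambda^{2/7-1/2}b^{1/7})^{7/8}=\lambda^{-1/16}b^{1/8}\le1$. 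For the axis cap, $\sin\theta=O(\lambda^{-1/2})$, so the second scale is $\max\{1,\lambda^{1/2}\}$; the same computation then gives decay. The remaining summation over shells and over $O(\log\lambda)$ pieces is identical to the centered case.

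I expect the main obstacle to be the uniform symbol bounds for the $s$-normalized derivatives, $(\lambda b)^{-1}\partial_s$, in the separated case. The difficulty is checking that the $c$-dependent phase term, together with the angular cutoffs and the branch amplitude, yields amplitudes obeying \eqref{eq:symbol-bounds} in the normalized $\zeta$-coordinates uniformly in $b,\delta,\lambda$ and $|c|\le A$. A related point is that Sobolev is applied at fixed $x$ while the shear is used only afterwards for the spacetime norm. If the plain $(t,s)$ coordinates cause trouble near $c_2=0$, I would first try the coordinates $(\mu,\nu)$ of \eqref{eq:sobolev-coordinates}. There the centered part contributes only $O(\lambda b\delta)$ to the $\nu$-derivative, and the translation contributes $O(\lambda b)$ through $\partial_\nu s=O(1)$.
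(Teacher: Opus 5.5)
Your proposal is correct and follows the paper's own proof essentially step for step. Specifically, it matches the paper in the Sobolev scales $(\lambda,\lambda b)$ taken in the plain $(t,s)$ variables at fixed $x$, the shear afterwards so that Proposition~\ref{prop:spacetime} applies, the cancellation of $\delta^{-2}$ against $\delta^{2}$ at $r=4$, the terminal interpolation $(1/4,1/8)=\frac78(1/7,1/7)+\frac18(1,0)$, and the iteration with Lemma~\ref{lem:cordoba} at $r=4$. The only deviations are minor: the paper uses the cruder scales $(\lambda,\lambda)$ for all terminal intervals, including the axis cap, and gets $\lambda^{-1/16}$ without your harmless $b^{1/8}$ factor; and no fallback to $(\mu,\nu)$ is needed near $c_2=0$, because the Sobolev lemma uses only upper bounds and $c_2\sin\theta/b$ is bounded for every $|c|\le A$.
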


\begin{proof}
We first assume that $\lambda$ is sufficiently large. The positive
branch localized to $I$ is
\[
 \A^{c,+}_{\lambda,I}g(x,t,s)
 =\A^+_{\lambda,I}g\bigl(x-(tc_1,sc_2),t,s\bigr),
\]
with phase
\[
 \varphi^c_{t,s}(\xi)=\Phi_{t,s}(\xi)-tc_1\xi_1-sc_2\xi_2.
\]
The shear $x\mapsto x-(tc_1,sc_2)$ preserves spacetime measure,
and each kernel is a spatial translate of the centered kernel.
The undifferentiated spacetime and kernel bounds are therefore
unchanged. This parameter-dependent shear does not in general
preserve the parameter supremum at fixed $x$.
We take parameter derivatives at fixed $x$ in the original
variables $(t,s)$. A fixed smooth parameter cutoff,
equal to one near $\I^2$ and supported in a slightly larger compact
rectangle in $(0,\infty)^2$, is understood below.

\subsubsection*{Separated angular supports}
Let $I,I'\subset I_b$ satisfy the hypotheses of
Proposition~\ref{prop:separated}, with
$\lambda^{-1/2}\lesssim\delta\lesssim b$.
Choose $\theta_0$ between the two supports and write
$\xi=\lambda r e(\theta)$, where $\theta=\theta_0+\delta u$ and
$H=H_{t,s}(\theta)$ is given by \eqref{eq:angular-phase}.
To compare with the centered case, recall the coordinates
\eqref{eq:sobolev-coordinates}. By
\eqref{eq:inverse-sobolev-coordinates}, $\partial_\nu\varphi^c_{t,s}$
differs from $\partial_\nu\Phi_{t,s}$ by
$\mu c_1\xi_1\sin^2\theta_0/(2t)-\mu c_2\xi_2\cos^2\theta_0/(2s)$,
which need not vanish at $\theta=\theta_0$. Thus the cancellation giving the
smaller scale $\lambda b\delta$ in
Section~\ref{sec:parameter-sobolev} need not persist in these
coordinates. In the original parameters $(t,s)$, direct
differentiation gives
\[
 \begin{aligned}
 \lambda^{-1}\partial_t\varphi^c
 &=r\left(\frac{t\cos^2\theta}{H}-c_1\cos\theta\right),\\
 (\lambda b)^{-1}\partial_s\varphi^c
 &=r\left(\frac{s\sin^2\theta}{bH}-c_2\frac{\sin\theta}{b}\right),\\
 (\lambda^2b)^{-1}\partial_t\partial_s\varphi^c
 &=-\frac{rts\cos^2\theta\sin^2\theta}{\lambda bH^3}.
 \end{aligned}
\]
These expressions and their required derivatives in $(t,s,r,u)$
are bounded uniformly for $|c|\le A$; the last expression is
$O(b/\lambda)$. In particular, the $u$ derivatives of
$\sin\theta/b$ are bounded because $\delta\lesssim b$.
The branch symbols \eqref{eq:branch-symbol}, including the smooth
angular cutoffs, have uniform derivatives in the same variables.
The product rule in \eqref{eq:differentiated-symbols}, now with
$\lambda^{-1}\partial_t$ and $(\lambda b)^{-1}\partial_s$,
therefore gives uniformly bounded differentiated amplitudes.
The same applies to a product of two branches, using the sum of
the phases and the product amplitude. Derivatives of the fixed
parameter cutoff cause no loss since $\lambda,\lambda b\gtrsim1$.

After taking these derivatives at fixed $x$, make the shear
$x\mapsto x-(tc_1,sc_2)$ in the spacetime norm. The phase is then
the centered phase of Proposition~\ref{prop:spacetime}, and the
differentiated amplitudes satisfy \eqref{eq:symbol-bounds}.
Their angular supports are unchanged. The prefactor of the
product is $\lambda^{-1}(2\pi)^{-4}$, so that proposition and
Plancherel give, for $\alpha,\beta\in\{0,1\}$,
\[
 \Big\|\lambda^{-\alpha}(\lambda b)^{-\beta}
 \partial_t^\alpha\partial_s^\beta
 \bigl(\A^{c,+}_{\lambda,I}g_1\,
       \A^{c,+}_{\lambda,I'}g_2\bigr)\Big\|_{L^2_{x,t,s}}
 \lesssim_A\lambda^{-1}b^{-1/2}\delta^{-2}
              \|g_1\|_2\|g_2\|_2.
\]
Let $\mathcal B^c_{\lambda,I,I'}$ denote the analogue of
\eqref{eq:bilinear-maximal} for the translated positive branch.
In the shell decomposition, $b\ge\lambda^{-1/2}$, so
$\lambda b\ge\lambda^{1/2}\ge1$.
Lemma~\ref{lem:parameter-sobolev}, with scales $\lambda,\lambda b$
and exponent $2$, gives
\[
 \begin{aligned}
 \|\mathcal B^c_{\lambda,I,I'}(g_1,g_2)\|_2
 &\lesssim_A
 (\lambda^2b)^{1/2}\lambda^{-1}b^{-1/2}\delta^{-2}
 \|g_1\|_2\|g_2\|_2\\
 &=\delta^{-2}\|g_1\|_2\|g_2\|_2.
 \end{aligned}
\]
The spacetime estimate has the same dependence on
$\lambda,b,\delta$ as \eqref{eq:bilinear-L2}.
Compared with \eqref{eq:bilinear-L2-result}, the additional
factor $\delta^{-1/2}$ comes from the Sobolev cost
$\lambda b^{1/2}$ in place of $\lambda(b\delta)^{1/2}$.

The kernel $L^1$ bound gives the unchanged $L^\infty$ estimate
\eqref{eq:bilinear-Linfty}. If we linearize the common parameter
supremum as in Section~\ref{sec:separated-proof} and interpolate
with equal weights, the factors $\delta^{-2}$ and $\delta^2$
cancel. Restoring the supremum gives
\begin{equation}\label{eq:translated-separated}
 \|\mathcal B^c_{\lambda,I,I'}(g_1,g_2)\|_4
 \lesssim_A\|g_1\|_4\|g_2\|_4.
\end{equation}

\subsubsection*{Terminal intervals}
For an interval of width $0<h\lesssim_{C_0}\lambda^{-1/2}$,
use the parameter scales $\lambda,\lambda$.
The phase derivatives normalized by these scales are bounded
uniformly in the center direction. After the linear cap rescaling
\eqref{eq:cap-frequency-map}, their required frequency and
parameter derivatives are also uniformly bounded for $|c|\le A$.
The same product rule therefore gives uniform seminorms for all
four differentiated branch amplitudes. Lemma~\ref{lem:kernel}\eqref{lem:kernel-small}--\eqref{lem:kernel-amplitude}
applies to these amplitudes and gives kernel
$L^1$ bounds $O_{A,C_0}(\lambda^{-1/2})$.
Young's inequality and Lemma~\ref{lem:parameter-sobolev} at
exponent $7$ now yield
\[
 \Big\|\sup_{(t,s)\in\I^2}|\A^{c,+}_{\lambda,I}g|\Big\|_7
 \lesssim_{A,C_0}\lambda^{2/7}\lambda^{-1/2}\|g\|_7
 =\lambda^{-3/14}\|g\|_7.
\]
Interpolate this with the kernel $L^1\to L^\infty$ bound
$C_{C_0}\lambda$, using
\[
 \left(\frac14,\frac18\right)
 =\frac78\left(\frac17,\frac17\right)+\frac18(1,0).
\]
After restoring the supremum as before, we obtain
\begin{equation}\label{eq:translated-short}
 \Big\|\sup_{(t,s)\in\I^2}|\A^{c,+}_{\lambda,I}g|\Big\|_8
 \lesssim_{A,C_0}\lambda^{-1/16}\|g\|_4.
\end{equation}

\subsubsection*{Angular iteration and summation}
We repeat Section~\ref{sec:frequency-proof} with
\eqref{eq:translated-separated} and \eqref{eq:translated-short} in
place of Propositions~\ref{prop:separated} and \ref{prop:short}.
Use the partitions \eqref{eq:shell-partition} and the terminal scale
\eqref{eq:terminal-scale}, and define $F_j,G_j$ by
\eqref{eq:iteration-maxima}, with the translated branch in place of
the centered branch. The separated-pair sum becomes
\[
 \|G_j\|_8^8\lesssim_{A,K}
 \Bigl(\sum_{I\in\mathcal I_j}\|f_I\|_4^4\Bigr)^2.
\]
By \eqref{eq:input-sum} and Lemma~\ref{lem:cordoba} at exponent $4$,
each $\|G_j\|_8$ is bounded by a fixed power of $1+\log\lambda$
times $\|f_{I_b}\|_4$.
For the corresponding terminal maximum $F_L$,
\eqref{eq:translated-short} gives
\[
 \begin{aligned}
 \|F_L\|_8^8
 &\lesssim_{A,K}\lambda^{-1/2}
   \sum_{I\in\mathcal I_L}\|f_I\|_4^8\\
 &\le\lambda^{-1/2}
   \Bigl(\sum_{I\in\mathcal I_L}\|f_I\|_4^4\Bigr)^2.
 \end{aligned}
\]
Thus the same square-function bound holds for $\|F_L\|_8$,
including when $L=0$. The recurrence \eqref{eq:iteration}, the
choice of $K$, and the summation over the angular shells are
unchanged. The axis interval $I_{\mathrm{ax}}$ is controlled by
\eqref{eq:translated-short} with $C_0=2$, and the sharp shell
projections are uniformly bounded on $L^4$ by the argument preceding
\eqref{eq:shell-projection-bound}.
Reflections and coordinate interchange preserve $|c|$,
so all constants remain uniform for $|c|\le A$. The negative
branch follows by the conjugation argument used at the end of
Section~\ref{sec:frequency-proof}. This proves the stated estimate
for large $\lambda$.

For the remaining bounded values of $\lambda$, the kernels of
$\A^c_{t,s}P_\lambda$ have a common Schwartz majorant, uniformly
for $(t,s)\in\I^2$ and $|c|\le A$. Its $L^{8/7}$ norm is finite,
so Young's inequality gives the required $L^4\to L^8$ bound.
\end{proof}

Finally, combine Proposition~\ref{prop:translated} with
Lemma~\ref{lem:translated-diagonal} and the unit-mass $L^\infty$ bound.
For fixed $c$, the kernels of $\A^c_{t,s}P_{\le1}$ have a common
Schwartz majorant, so Young's inequality also gives the required
low-frequency bound. The interpolation and dyadic summation in
Section~\ref{sec:sufficiency}, with $(1/4,1/8)$ replacing $(2/7,1/7)$,
give every point of $\operatorname{int}\Delta^\ast$ and the diagonal
$3<p=q\le\infty$.

\subsection{Necessary conditions}\label{sec:translated-necessity}
We first establish the condition $5/p-2/q\le1$, then adapt the tests from Section~\ref{sec:necessary-conditions}.

\subsubsection*{The condition $5/p-2/q\le1$}

The following constructions apply to every fixed $c\ne0$.
Suppose first that $c_1\ne0$. Choose $\varepsilon_0\in\{-1,1\}$ with $\varepsilon_0c_2\ge0$, so that $|c_2+\varepsilon_0|=1+|c_2|\ge1$. At the output point
\begin{equation}\label{eq:vertex-output}
 x_{\mathrm{vert}}(t,s)=(c_1t,(c_2+\varepsilon_0)s),
\end{equation}
the full average is exactly
\[
 \A^c_{t,s}f(x_{\mathrm{vert}}(t,s))
 =\frac1{2\pi}\int_0^{2\pi}
 f(-t\cos\theta,s[\varepsilon_0-\sin\theta])\,d\theta.
\]
On the semicircle where $\sin\theta$ has sign $\varepsilon_0$, use
\[
 u=-t\cos\theta,
 \quad \sin\theta=\varepsilon_0\sqrt{1-u^2/t^2},
 \quad |d\theta|=\frac{|du|}{\sqrt{t^2-u^2}}.
\]
The semicircle is parameterized one-to-one, up to its endpoints, by $-t<u<t$. For $f\ge0$, positivity gives
\begin{equation}\label{eq:vertex-average}
 \A^c_{t,s}f(x_{\mathrm{vert}}(t,s))
 \ge\frac1{2\pi}\int_{-t}^t
 f\Bigl(u,\varepsilon_0s\bigl[1-\sqrt{1-u^2/t^2}\bigr]\Bigr)
 \frac{du}{\sqrt{t^2-u^2}}.
\end{equation}

Let $0<h\le1/10$ and restrict the parameters to
\[
 s=t^2(1+v),
\]
where $5/4\le t\le4/3$ and $0\le v\le h^2$.
These values belong to $[1,2]^2$, since
$1<s\le(16/9)(1+1/100)<2$. Since $s/t^2=1+v$, rationalizing the square root gives, for $|u|<t$,
\begin{equation}\label{eq:quartic-identity}
 s\bigl(1-\sqrt{1-u^2/t^2}\bigr)-\frac{u^2}{2}
 =\frac{vu^2}{2}
  +\frac{s u^4}{2t^4(1+\sqrt{1-u^2/t^2})^2}.
\end{equation}
For $|u|\le h$ the right-hand side of \eqref{eq:quartic-identity} lies between zero and $3h^4/2$, since $vu^2/2\le h^4/2$ and $s/t^4=(1+v)/t^2<1$.

Take $\chi,\eta\in C_c^\infty((-2,2))$, bounded between zero and one and each equal to one on $[-1,1]$, and set
\begin{equation}\label{eq:quartic-input}
 f_h(u,z)=\chi(u/h)\,
 \eta\left(\frac{z-\varepsilon_0u^2/2}{2h^4}\right).
\end{equation}
For finite $p$, integration first in $z$ and then in $u$ gives
\begin{equation}\label{eq:quartic-input-norm}
 \begin{aligned}
 \|f_h\|_p^p
 &=2h^4\|\eta\|_p^p\int_{\R}|\chi(u/h)|^p\,du\\
 &=2h^5\|\chi\|_p^p\|\eta\|_p^p.
 \end{aligned}
\end{equation}
Thus $\|f_h\|_p\sim_p h^{5/p}$.
For $p=\infty$ its norm is one. By \eqref{eq:quartic-identity}, the integrand $f_h$ in \eqref{eq:vertex-average} is identically one for $|u|\le h$. Since $t\le2$,
\begin{equation}\label{eq:quartic-lower}
 \A^c_{t,t^2(1+v)}f_h
     \bigl(c_1t,(c_2+\varepsilon_0)t^2(1+v)\bigr)
 \ge\frac1{2\pi}\int_{-h}^h\frac{du}{\sqrt{t^2-u^2}}
 \ge\frac{h}{2\pi}.
\end{equation}

The output map and its Jacobian are
\[
 (t,v)\longmapsto
 \bigl(c_1t,(c_2+\varepsilon_0)t^2(1+v)\bigr),
 \quad
 \det\frac{\partial x}{\partial(t,v)}
 =c_1(c_2+\varepsilon_0)t^2.
\]
It is injective: $t=x_1/c_1$ and
$v=x_2/[(c_2+\varepsilon_0)t^2]-1$. Thus its image $E_h$ has area
\begin{equation}\label{eq:quartic-area}
 |E_h|=|c_1(c_2+\varepsilon_0)|h^2
       \int_{5/4}^{4/3}t^2\,dt\sim_c h^2.
\end{equation}
Equations \eqref{eq:quartic-input-norm}--\eqref{eq:quartic-area} show that even a weak type bound requires
\[
 h^{1+2/q}\lesssim_{c,p,q}h^{5/p}\quad\text{as }h\downarrow0.
\]
Consequently
\begin{equation}\label{eq:nec-quartic}
 5/p-2/q\le1.
\end{equation}
The support of \eqref{eq:quartic-input} has area comparable to $h^5$, and its indicator dominates $f_h$. Thus \eqref{eq:nec-quartic} is necessary even for restricted weak type.

If $c_1=0$ and $c_2\ne0$, let $S(x_1,x_2)=(x_2,x_1)$. Since $d\sigma$ is invariant under $y\mapsto Sy$, substituting $Sy$ for $y$ in the definition of $\sigma_{t,s}$ and interchanging $t$ and $s$ gives
\[
 \M^cf(Sx)=\M^{Sc}(f\circ S)(x).
\]
The transformed vector $Sc=(c_2,0)$ has nonzero first coordinate.
Since $S$ preserves Lebesgue measure, the construction for $Sc$ gives
\eqref{eq:nec-quartic} for $c$ as well. This treats every nonzero $c$.

\subsubsection*{The remaining necessary conditions}
The remaining necessary conditions follow from the tests in Section~\ref{sec:necessary-conditions}.
Every averaging curve is contained in $B(0,2(1+|c|))$, so the
large-disc test still gives $1/q\le1/p$.
For fixed $(t_0,s_0)\in\I^2$, take the indicator of the
$\delta$-neighborhood of $-(t_0c_1,s_0c_2)-E_{t_0,s_0}$,
where $E_{t_0,s_0}$ is the centered ellipse with semiaxes $t_0,s_0$.
Its support has area comparable to $\delta$, and its average at
$(t_0,s_0)$ is one on $B(0,\delta)$. Thus $1/p\le2/q$.

For the cusp obstruction, coordinate reflections allow us to assume
$c_1,c_2\le0$. Retain $r_0\in(1,2)$ from
Section~\ref{sec:necessary-conditions} and use the cusp test functions
\eqref{eq:cusp-test-function}. Replace the outputs \eqref{eq:test-centers} by
\[
 x^c(t,s)=x(t,s)+(tc_1,sc_2).
\]
Then $\A^c_{t,s}f(x^c(t,s))=\A^0_{t,s}f(x(t,s))$.
To retain a set of outputs of positive area, note that at
$(t,s)=(r_0,r_0)$ the functions $u,v$ in \eqref{eq:test-centers} satisfy
\[
 D_{(t,s)}(u,v)=\frac1{2\sqrt2}
 \begin{pmatrix}3&-1\\-1&3\end{pmatrix}.
\]
Consequently,
\[
 \det D_{(t,s)}x^c
 =\left(c_1-\frac3{2\sqrt2}\right)
  \left(c_2-\frac3{2\sqrt2}\right)-\frac18\ge1.
\]
The inverse function theorem therefore gives a fixed neighborhood of
$(r_0,r_0)$ on which $x^c$ is injective and its Jacobian is bounded
away from zero. Choose a fixed closed rectangle $\mathcal Q_0\Subset\mathcal Q$
of positive area inside this neighborhood; then
$\Omega^c=x^c(\mathcal Q_0)$ has fixed positive area. Fix the
constants and the cutoff functions $\chi_1,\chi_2,\chi_3$ in
\eqref{eq:cusp-test-function} as in
Section~\ref{sec:necessary-conditions}, independently of $\nu$. Since
$\A^c_{t,s}f_\nu(x^c(t,s))=\A^0_{t,s}f_\nu(x(t,s))$, the lower bound of
Section~\ref{sec:necessary-conditions} holds on $\Omega^c$, and these
test functions show that weak type fails for $p\le3$. Together with
\eqref{eq:nec-quartic}, the two elementary tests and this cusp test
give precisely the necessary conditions in
Theorem~\ref{thm:translated}.

\appendix

\section{Selected curves and regularized averages}\label{app:selected-curves}
We prove the selected-curves estimate used in
Section~\ref{sec:translated-diagonal-estimates} and then deduce a
regularized bound from Proposition~\ref{prop:translated-pyz-cinematic}.

\subsection{Proof of Lemma~\ref{lem:app-selected-curves}}
The proof uses the PYZ estimate \eqref{eq:app-pyz-incidence} and the
output projection \eqref{eq:app-output-projection}.

\begin{proof}
Fix $\varepsilon>0$. We first prove \eqref{eq:app-selected-strong}
for $r=3$ and $\delta$ sufficiently small.

\subsubsection*{Discrete selections}
Let $E\subset\eta(\mathcal F)$ be measurable and select $\phi_\eta$
measurably for $\eta\in E$.  For $0\le a<\delta$, the curves selected at
$a+n\delta\in E$ are distinct, and \eqref{eq:app-output-projection}
puts at most $C\rho/\delta$ of them in any $C^2$ ball of radius
$\rho\ge\delta$.  Every subset has the same nonconcentration bound.
Using \eqref{eq:app-pyz-incidence} with a sufficiently small loss gives,
for each such subset $F$ and every $\gamma>0$,
\[
 \Big\|\sum_{\phi\in F}\mathbf1_{\phi^\delta}\Big\|_{3/2}
 \lesssim_\gamma\delta^{2/3-\gamma}(\#F)^{2/3}.
\]
Here $\delta$ is sufficiently small in terms of the stated bounds and
$\gamma$ only; fixed nonconcentration constants are absorbed in the loss.

\subsubsection*{Weighted sums}
For fixed $a$, assign nonnegative weights $w_i$ to the selected curves
$\phi_i$, $1\le i\le N$, and relabel them so that $w_1\ge\cdots\ge w_N$.
Abel summation and H\"older's inequality give
\[
 \begin{aligned}
 \Big\|\sum_{i=1}^Nw_i\mathbf1_{\phi_i^\delta}\Big\|_{3/2}
 &\lesssim_\gamma\delta^{2/3-\gamma}
   \sum_{i=1}^Nw_i\bigl(i^{2/3}-(i-1)^{2/3}\bigr)\\
 &\lesssim_\gamma\delta^{2/3-\gamma}(\log(2+N))^{1/3}
   \Bigl(\sum_{i=1}^Nw_i^{3/2}\Bigr)^{2/3}.
 \end{aligned}
\]
Indeed, $i^{2/3}-(i-1)^{2/3}\lesssim i^{-1/3}$.  The bounded output
interval gives $N\lesssim1+\delta^{-1}$; choosing $0<\gamma<\varepsilon$
absorbs the loss and logarithm into $\delta^{-\varepsilon}$.

\subsubsection*{Continuous selections}
Decomposition into shifted lattices and Minkowski's and H\"older's inequalities yield
\begin{equation}\label{eq:app-continuous-incidence}
 \begin{aligned}
 \Big\|\delta^{-1}\int_Ew(\eta)\mathbf1_{\phi_\eta^\delta}\,d\eta\Big\|_{3/2}
 &\lesssim_\varepsilon\delta^{-1/3-\varepsilon}\int_0^\delta
   \Bigl(\sum_{n:a+n\delta\in E}w(a+n\delta)^{3/2}\Bigr)^{2/3}da\\
 &\le C_\varepsilon\delta^{-\varepsilon}\|w\|_{L^{3/2}(E)},
 \end{aligned}
\end{equation}
for nonnegative $w\in L^{3/2}(E)$, using the exact identity
\[
 \int_0^\delta\sum_{n:a+n\delta\in E}w(a+n\delta)^{3/2}\,da
 =\int_Ew(\eta)^{3/2}\,d\eta.
\]

\subsubsection*{Duality}
Let $g$ be continuous with compact support, and let $\kappa>0$. Since
$\mathcal F$ is compact, $\eta$ is continuous, and the tube average
depends continuously on $\phi$ in the $C^2(J)$ topology, a measurable
selection theorem gives a selection $\eta\mapsto\phi_\eta$ on
$E=\eta(\mathcal F)$ with $\eta(\phi_\eta)=\eta$ satisfying the
following inequality.
\[
 T_\delta g(\eta)
 \le\frac1{2\delta}\int_J\int_{-\delta}^{\delta}
   |g(v,\phi_\eta(v)-z)|\,dz\,dv+\kappa
 =\frac1{2\delta}\int|g|\,\mathbf1_{\phi_\eta^\delta}+\kappa.
\]
For nonnegative $w\in L^{3/2}(E)$, Fubini's theorem, H\"older's
inequality, and \eqref{eq:app-continuous-incidence} give
\[
 \begin{aligned}
 \int_Ew\,T_\delta g\,d\eta
 &\le\int|g|\,\frac1{2\delta}\int_Ew(\eta)\mathbf1_{\phi_\eta^\delta}\,d\eta
   +\kappa\|w\|_{L^1(E)}\\
 &\le C_\varepsilon\delta^{-\varepsilon}\|g\|_3\|w\|_{L^{3/2}(E)}
   +\kappa\|w\|_{L^1(E)}.
 \end{aligned}
\]
Letting $\kappa\to0$ and taking the supremum over
$\|w\|_{L^{3/2}(E)}\le1$ proves \eqref{eq:app-selected-strong} for $r=3$.

\subsubsection*{Extensions}
For fixed $\delta$
and $g_n\to g$ in $L^3$, sublinearity and H\"older's inequality give
\[
 \|T_\delta g-T_\delta g_n\|_\infty
 \le (2\delta)^{-1/3}|J|^{2/3}\|g-g_n\|_3.
\]
Since the output interval is bounded, the maximal averages converge in
$L^3(d\eta)$, and the estimate extends to general inputs.  Scales with $\delta$ bounded away
from zero follow from H\"older's inequality.  Finally,
$\|T_\delta\|_{\infty\to\infty}\le |J|$. Linearizing $T_\delta$ by a
measurable selection as above and interpolating gives
\eqref{eq:app-selected-strong} for $r>3$.
\end{proof}

\subsection{Regularized averages}
The endpoint in Proposition~\ref{prop:translated-pyz-cinematic} also
gives the following estimate for regularized averages.

\begin{corollary}\label{cor:translated-regularized}
Fix $c\in\R^2$, and let $\rho\in C_c^\infty(\R^2)$ be nonnegative
with $\int\rho=1$.  Put $\rho_\delta(x)=\delta^{-2}\rho(x/\delta)$.
For every $\varepsilon>0$,
\begin{equation}\label{eq:translated-regularized}
 \Big\|\sup_{(t,s)\in\I^2}
       \A^c_{t,s}(\rho_\delta*|f|)\Big\|_3
 \lesssim_{c,\rho,\varepsilon}\delta^{-\varepsilon}\|f\|_3,
 \qquad 0<\delta\le1.
\end{equation}
\end{corollary}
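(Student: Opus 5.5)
The plan is to reduce \eqref{eq:translated-regularized} to the frequency-localized bound \eqref{eq:app-diagonal-loss} at $r=3$ by a Littlewood--Paley decomposition of the nonnegative function $F=\rho_\delta*|f|$. The localization to frequencies $\lesssim\delta^{-1}$ will come from the rapid decay of $\widehat\rho(\delta\xi)$.

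Write $F=P_{\le1}F+\sum_{\lambda\ge2}P_\lambda F$. Positivity of $F$ is not needed after this step, since $\A^c_{t,s}F\le|\A^c_{t,s}P_{\le1}F|+\sum_\lambda|\A^c_{t,s}P_\lambda F|$ pointwise. Hence
\[
\sup_{(t,s)\in\I^2}\A^c_{t,s}F\le\M^cP_{\le1}F+\sum_{\lambda\ge2}\M^c_\lambda F .
\]

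The three pieces are handled as follows.
\begin{itemize}
\item The low-frequency term is bounded by $H*|F|$ for a Schwartz majorant $H$, as in Section~\ref{sec:sufficiency}. This gives $\lesssim\|F\|_3\le\|f\|_3$.
\item For $2\le\lambda\le\delta^{-1}$, Proposition~\ref{prop:translated-pyz-cinematic} with $r=3$ and $\varepsilon/2$ gives $\|\M^c_\lambda F\|_3\lesssim\lambda^{\varepsilon/2}\|F\|_3\le\lambda^{\varepsilon/2}\|f\|_3$. Summing over the $O(\log(2/\delta))$ dyadic $\lambda$ in this range yields $\lesssim\delta^{-\varepsilon}\|f\|_3$.
\item For $\lambda>\delta^{-1}$, write $P_\lambda F=P_\lambda\rho_\delta*|f|$. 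The multiplier $\psi(|\xi|/\lambda)\widehat\rho(\delta\xi)$ has symbol seminorms at scale $\lambda$ that are $O_N((\lambda\delta)^{-N})$. Its kernel is therefore $(\lambda\delta)^{-N}$ times an $L^1$-normalized bump at scale $\lambda^{-1}$, and $\|P_\lambda F\|_3\lesssim_N(\lambda\delta)^{-N}\|f\|_3$.
\end{itemize}
Since $P_\lambda F=\widetilde P_\lambda P_\lambda F$, where $\widetilde P_\lambda$ projects to a slightly larger annulus, I apply the $\widetilde P_\lambda$ version of Proposition~\ref{prop:translated-pyz-cinematic}. Its proof uses only the frequency support, so the same loss $\lambda^{\varepsilon}$ holds. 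This gives $\|\M^c_\lambda F\|_3\lesssim\lambda^{\varepsilon}(\lambda\delta)^{-N}\|f\|_3$. Summing over $\lambda>\delta^{-1}$ with $N=1$ gives $\lesssim\delta^{-\varepsilon}\|f\|_3$.

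Alternatively, for large $\lambda$ one can use the trivial bound $\M^c_\lambda G\lesssim\|G\|_\infty$ together with Bernstein, but the route above is cleaner.

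The only point needing care is this replacement of $P_\lambda$ by $\widetilde P_\lambda$ in Proposition~\ref{prop:translated-pyz-cinematic}. There $\M^c_\lambda$ is defined with the fixed projection $P_\lambda$, while I need to apply it to $P_\lambda F$ whose norm carries the decay. I expect this to be harmless, because the proof of Proposition~\ref{prop:translated-pyz-cinematic} only uses that the vertical Fourier support of the input lies in $|\xi_2|\lesssim\lambda$ and that $\|P_\lambda f\|_3\lesssim\|f\|_3$. Equivalently, one can note that $P_\lambda$ composed with the smooth multiplier $\widehat\rho(\delta\cdot)$ is bounded on $L^3$ with the stated decay, and apply the proposition directly to $\rho_\delta*|f|$.
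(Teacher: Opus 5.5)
Your proposal is correct and is essentially the paper's proof. It uses a Littlewood--Paley decomposition of $\rho_\delta*|f|$, Proposition~\ref{prop:translated-pyz-cinematic} at $r=3$ combined with the decay $\min\{1,(\delta\lambda)^{-N}\}$ from $\widehat\rho(\delta\xi)$ on $|\xi|\sim\lambda$, and a Schwartz majorant for the low frequencies. The paper avoids the $\widetilde P_\lambda$ issue you flagged by putting the auxiliary projection on the input, $P_\lambda(\rho_\delta*g)=P_\lambda(\rho_\delta*\widetilde P_\lambda g)$. The proposition then applies verbatim to $\rho_\delta*\widetilde P_\lambda g$, whose $L^3$ norm is $O(\min\{1,(\delta\lambda)^{-N}\}\|g\|_3)$; this is the precise form of your closing remark, since applying it to $\rho_\delta*|f|$ itself would give no decay.
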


If $\rho\ge c_0>0$ on the unit ball, the left-hand side of
\eqref{eq:translated-regularized} dominates the maximal function of
averages over $\delta$-neighborhoods of the translated ellipses, which is
the physical-space form of such estimates; compare~\cite{PYZ}. We record
it for this reason; it is not used elsewhere in the paper.

\begin{proof}
We first take $g=|f|$ with $f$ smooth and compactly supported; the
general case follows at the end by approximation. Use the radial
partition and the projections $P_\lambda$ from
Section~\ref{sec:frequency-reduction}; in particular, $P_\lambda$ has
symbol $\psi(|\xi|/\lambda)$. Let $\widetilde P_\lambda$ have symbol $\widetilde\psi(|\xi|/\lambda)$, where
$\widetilde\psi\in C_c^\infty((1/4,4))$ equals one on $\supp\psi$.
Then $P_\lambda\widetilde P_\lambda=P_\lambda$.  After $\xi=\lambda\zeta$,
the symbol of $\rho_\delta*\widetilde P_\lambda$ is
$\widetilde\psi(|\zeta|)\widehat\rho(\delta\lambda\zeta)$.
For every $N>0$, its derivatives through order four are bounded by
$C_{\rho,N}\min\{1,(\delta\lambda)^{-N}\}$ on a fixed annulus.
Integration by parts gives the same bound for the kernel's $L^1$ norm.
Thus Proposition~\ref{prop:translated-pyz-cinematic}, with $r=3$, gives
\[
 \Big\|\sup_{(t,s)\in\I^2}
   |\A^c_{t,s}P_\lambda(\rho_\delta*g)|\Big\|_3
 \lesssim_{c,\rho,\varepsilon,N}
 \lambda^\varepsilon\min\{1,(\delta\lambda)^{-N}\}\|g\|_3.
\]
For the low frequencies, the bounded averaging displacements and the
Schwartz kernel of $P_{\le1}$ give a common integrable majorant for the
kernels of $\A^c_{t,s}P_{\le1}$.  Young's inequality bounds this term by
$C_c\|g\|_3$, uniformly in $0<\delta\le1$.
Apply these bounds to $g=|f|$ and sum over dyadic $\lambda\ge2$.
For $N>\varepsilon$, the sum of
$\lambda^\varepsilon\min\{1,(\delta\lambda)^{-N}\}$ is
$O_{\varepsilon,N}(\delta^{-\varepsilon})$, proving
\eqref{eq:translated-regularized} for such $f$ by the triangle inequality.
For general $f\in L^3$, approximation in $L^3$ gives uniform convergence
of $\rho_\delta*|f|$ for fixed $\delta$, and Fatou's lemma completes the
proof.
\end{proof}

\section*{Acknowledgement}
This work is supported by the National Research Foundation of Korea (Grant No.~RS-2024-00342160: Sanghyuk Lee and Seunghwan Lee; No.~RS-2024-00461749: Jin Bong Lee, Sanghyuk Lee, and Jeongtae Oh). 
During the course of this work, the authors used ChatGPT and Codex (OpenAI), and Claude (Anthropic) to extend and develop the basic ideas of the paper, to check computations, to review the mathematical arguments, and to assist with English-language editing and LaTeX preparation. All mathematical content was verified by the authors, who take full responsibility for it.

\end{document}